\newtheorem{theorem}{Theorem}[section]
\newtheorem{proposition}[theorem]{Proposition}
\newtheorem{corollary}[theorem]{Corollary}
\newtheorem{lemma}[theorem]{Lemma}
\newtheorem{remark}[theorem]{Remark}
\newtheorem{definition}[theorem]{Definition}
\numberwithin{equation}{section} 
\numberwithin{equation}{section}
\newcommand{\R}{\mathbb{R}}
\newcommand{\ben}{\begin{eqnarray*}}
\newcommand{\enn}{\end{eqnarray*}}
\newcommand{\pa}{\partial}
\newcommand{\ve}{\varepsilon}
\newcommand{\al}{\alpha}
\newcommand{\ol}{\overline}
\newcommand{\half}{\frac{1}{2}}
\newcommand{\na}{\nabla}
\newcommand{\be}{\begin{equation}}
\newcommand{\ee}{\end{equation}}
\newcommand{\ba}{\begin{aligned}}
\newcommand{\ea}{\end{aligned}}
\newcommand{\wt}{\tilde{w}}
\def\9{{\infty}}
\def\a{{\alpha}}
\def\del{{\delta}}
\def\lbb{{\lambda}}
\def\t{{\theta}}
\def\calo{{\mathcal{O}}}
\def\calp{{\mathcal{P}}}
\def\calu{{\mathcal{U}}}
\def\bbp{{\mathbb{P}}}
\def\bbr{{\mathbb{R}}}
\def\ve{{\varepsilon}}
\def\vf{{\varphi}}
\def\wt{\widetilde}
\def\ol{\overline}
\def\({\left(}
\def\){\right)}
\def\<{\langle}
\def\>{\rangle}
\def\bbx{{\mathbb{X}}}
\begin{document}

\title{Multi solitary waves to stochastic nonlinear Schr\"odinger equations}

\author{Michael R\"ockner}
\address{Fakult\"at f\"ur Mathematik,
Universit\"at Bielefeld, D-33501 Bielefeld, Germany}
\email{roeckner@math.uni-bielefeld.de}
\thanks{}

\author{Yiming Su}
\address{Department of mathematics,
Zhejiang University of Technology, 310014 Zhejiang, China}
\email{yimingsu@zjut.edu.cn}
\thanks{}

\author{Deng Zhang}
\address{School of mathematical sciences,
Shanghai Jiao Tong University, 200240 Shanghai, China }
\email{dzhang@sjtu.edu.cn}
\thanks{}

\subjclass[2010]{}

\keywords{}
\subjclass[2010]{Primary 60H15, 35C08, 35Q55.}

\keywords{Multi-solitons, rough path, stochastic nonlinear Schr\"odinger equations}
\date{}


\begin{abstract}
  In this paper, we present a pathwise construction of multi-soliton solutions for
  focusing stochastic nonlinear Schr\"odinger equations
  with linear multiplicative noise, in both the $L^2$-critical and subcritical cases.
  The constructed multi-solitons behave asymptotically as
  a sum of $K$ solitary waves,
  where $K$ is any given finite number.
  Moreover,
  the convergence rate of the remainders can be of
  either exponential or polynomial type,
  which reflects the effects of the noise in the system on the asymptotical behavior
  of the solutions.
  The major difficulty in our construction of stochastic multi-solitons
is the absence of pseudo-conformal invariance.
Unlike in the deterministic case \cite{M90,RSZ21},
the existence of stochastic multi-solitons
cannot be obtained from that of stochastic multi-bubble blow-up solutions
in \cite{RSZ21,SZ20}.
  Our proof is mainly based on the rescaling approach in \cite{HRZ18},
  relying on two types of Doss-Sussman transforms,
  and on the modulation method in \cite{CF20,MM06},
  in which the crucial ingredient is the monotonicity of
  the Lyapunov type functional constructed
  by Martel, Merle and Tsai \cite{MMT06}.
  In our stochastic case,
  this functional depends on the Brownian paths in the noise.
\end{abstract}

\maketitle
\begin{spacing}{1.02}
\end{spacing}

\section{Introduction and formulation of main results}  \label{Sec-Intro-Main}

\subsection{Introduction} \label{Subsec-Intro}

In this paper we consider the following type of
focusing stochastic nonlinear Schr\"odinger equations
(SNLS for short)
with linear multiplicative noise:
\begin{equation}  \label{equa-X-rough}
\left\{ \begin{aligned}
     &  dX(t) = i\Delta X(t)dt + i|X(t)|^{p-1}X(t) dt - \mu(t) X(t) dt + \sum\limits_{k=1}^N X(t) G_k(t)dB_k(t),  \\
     &  X(T_0)= X_0 \in H^1(\bbr^d).
\end{aligned} \right.
\end{equation}
Here, $1<p\leq 1+\frac 4d$, $d\geq 1$, $T_0\geq 0$,
$\{B_k\}$ are the standard $N$-dimensional real valued Brownian motions
on a normal stochastic basis $(\Omega, \mathscr{F}, \{\mathscr{F}_t\}, \bbp)$,
$G_k(t,x)=  i\phi_k(x)g_k(t),\ \ x\in \bbr^d,\ t\geq 0$,
$\{\phi_k\} \subseteq C_b^\9(\bbr^d, \bbr)$,
$\{g_k\} \subseteq C^\a(\bbr^+, \bbr)$,
$\a\in(\frac 13, \frac 12)$,
are $ \{\mathscr{F}_t\}$-adapted processes
that are controlled by $\{B_k\}$,
and $X(t) G_k (t) dB_k(t)$ is taken in the sense of controlled rough paths
(see Definition \ref{X-roughpath-def} below).
The term $\mu$ is   of form
\begin{align}
\mu (t,x) = \frac 12 \sum_{k=1}^N  \phi_k(x)^2 g_k(t)^2,\ \ x\in \bbr^d,\ t\geq 0,
\end{align}
such that the conservation law of mass is satisfied.
In particular,
if the processes are $\{\mathscr{F}_t\}$-adapted,
then the rough integration coincides with the usual It\^o integration
(\cite[Chapter 5]{FH14}),
and $-\mu X dt + \sum_{k=1}^N X G_k(t)dB_k(t)$ is exactly the standard Stratonovich differential.
For convenience,
we  focus on the case $N<\9$,
but the infinite case $N=\9$ can also be treated under suitable summability conditions
of the spatial functions $\{\phi_k\}$.

Nonlinear Schr\"odinger equations have various applications in contunuum mechanics,
plasma physics and optics.
In crystals the noise corresponds to scattering of excitons by phonons,
due to thermal vibrations of the molecules,
and its effect on the coherence of the ground state solitary solutions
was investigated
in the two-dimensional $L^2$-critical case in \cite{BCIRG95}  (see also \cite{BCIR94}).
The influence of noise on the collapse
was also studied in \cite{RGBC95} for the $L^2$-critical case in dimensions $d=1,2$.
Another important application can be found in open quantum systems,
where the noise is of non-conservative type
and $\{\|X(t)\|^2_{L^2}\}$ is a continuous martingale
such that the mean $\mathbb{E}\|X(t)\|_{L^2}^2$ is conserved
and the ``physical'' probability law can be defined.
We refer to \cite[Section 2]{BG09} for more physical interpretations.
We also refer to \cite{CHJK17,BDM02,DL02,MRRY20,MRY20}
for the numerical experiments to investigate
the dynamics of stochastic solutions.

It is known that SNLS is $H^1$ globally well-posed in the $L^2$-subcritical case $1<p<1+ \frac 4d$,
and is locally well-posed  in the critical case $p=1 + \frac 4d$.
See, e.g.,  \cite{BD03,BM13,BRZ16} and references therein.

The large time behavior of solutions, however,
are more delicate.
Different phenomena have been exhibited in the defocusing and focusing cases.

As a matter of fact,
for the canonical  nonlinear Schr\"odinger equation
(NLS for short)
\begin{equation}   \label{equa-NLS}
\left\{ \begin{aligned}
     &  du = i\Delta udt + \lbb i|u|^{p-1}u dt,  \\
     &  u(T_0)= u_{0} \in H^1(\bbr^d),
\end{aligned} \right.
\end{equation}
in the defocusing $L^2$-critical case
(i.e., $\lbb =-1, p=1+\frac 4d$),
solutions
exist globally and even scatter at infinity,
i.e., solutions behave asymptotically as free linear solutions.
See the works by Dodson \cite{D12,D16.1,D16.2}.
The scattering phenomena are also exhibited in the stochastic case.
We refer to \cite{HRZ18} for the  $H^1$-subcritical and critical cases,
and \cite{FX18.1,FX18.2,FXZ21,FZ20,Z18} for the $L^2$-critical case.

However, in the focusing $L^2$-critical case (i.e., $\lbb =1$, $p=1+\frac 4d$)
different dynamics appear.
An important role is played by the mass of the {\it ground state},
which is the unique radial solution to the nonlinear elliptic equation
\begin{align} \label{equa-Q}
\Delta Q- Q+Q^{p}=0.
\end{align}
By \cite[Theorem 1]{BL83} (see also \cite[Theorem 8.1.1]{C}),
$Q$ is smooth and decays at infinity exponentially fast,
i.e., there exist $C, \delta>0$ such that for any multi-index $|\upsilon|\leq 3$,
\be\label{Q-decay}
|\partial_x^\upsilon Q(x)|\leq C e^{-\delta |x|}, \ \ x\in \bbr^d.
\ee

On one hand, in the subcritical mass regime $\|u_0\|_{L^2}^2 <\|Q\|_{L^2}^2$,
solutions exist globally and scatter at infinity,
see \cite{D15}.
On the other hand,
in the (super)critical mass regime $\|u_0\|_{L^2}^2 \geq \|Q\|_{L^2}^2$,
solutions may form singularities in finite time
or do not scatter at infinity.

One typical blow-up dynamics in the critical  mass regime is
the {\it pseudo-conformal blow-up solution}
\begin{align}  \label{S-blowup-intro}
    S_T(t,x)=(w(T-t))^{-\frac d2}Q \(\frac{x-x^*}{w(T-t)}\)
             e^{ - \frac i 4 \frac{|x-x^*|^2}{T-t} + \frac{i}{w^2(T-t)} + i\vartheta},
\end{align}
where $T\in \bbr$, $w>0$, $x^* \in \bbr^d$ and $\vartheta \in \bbr$.
We note that,
$\|S_T\|_{L^2}^2 = \|Q\|_{L^2}^2$ and
$S_T$ blows up at time $T$ with  speed $\|\na S_T(t)\| \sim (T-t)^{-1}$.
A remarkable result proved by Merle \cite{M93} is that,
the pseudo-conformal blow-up solution is the unique critical mass
blow-up solution to $L^2$-critical NLS, up to the symmetries of
the equation.

Another important dynamics is
the {\it solitary wave}
\begin{align}  \label{R-def}
    R(t,x):=Q_{w} \(x-v^*t-x^0\)e^{i(\half v^*\cdot x-\frac{1}{4}|v^*|^2t+w^{-2}t+\vartheta)}.
\end{align}
where the parameters $x^0\in \bbr^d$,
and $w \in \bbr^+$,  $v^* \in \bbr^d$, $\vartheta\in \bbr$,
correspond to the frequency,
propagation speed
and phase, respectively,
and
\be
Q_{w}(x)=w^{-\frac {2}{p-1}}Q \(\frac{x}{w} \),
\ee
satisfying  the nonlinear elliptic equation
\begin{align} \label{equa-Qw}
\Delta Q_w- w^{-2}Q_w+Q_w^{p}=0.
\end{align}
In contrast to the above scattering solutions
and pseudo-conformal blow-up solutions,
the solitary wave exists globally but does not scatter at infinity.
An important underlying relationship is that,
the solitary wave and the pseudo-conformal blow-up solution
can be transformed into each other through the
{\it pseudo-conformal transform}:
\begin{align} \label{pseu-conf-transf}
   S_T(t,x) = \mathcal{C}_T(R)(t,x):= \frac{1}{(T-t)^{\frac d2}}R \(\frac{1}{T-t}, \frac{x}{T-t}\) e^{-i\frac{|x|^2}{4(T-t)}}, \ \
   t\not =T,\ x^* =v^* + (T-t)x^0.
\end{align}

Furthermore,
according to the famous {\it soliton resolution conjecture},
global solutions to nonlinear dispersive equations are expected to
behave asymptotically as a sum of solitary waves
plus a dispersive part.
One particular global solution is the {\it multi-soliton} (or, {\it multi-solitary wave solution}),
which is defined on $[T_0, \9)$ for some $T_0 \in \bbr$
and satisfies
\begin{align} \label{u-R-H1-o1}
 \|u(t) - \sum\limits_{k=1}^K R_k(t)\|_{H^1} \to 0,\ \ as\ t\ \to \9,
\end{align}
where $K \in \mathbb{N}\setminus\{0\}$, $R_k$ is the solitary wave of form
\begin{align}  \label{Rk-def}
    R_k(t,x):=Q_{w_{k}^0} \(x-v_k t-x_{k}^0 \)e^{i(\half v_k\cdot x-\frac{1}{4}|v_k|^2t+(w_{k}^0)^{-2}t+\theta_{k}^0)},
\end{align}
with parameters $w_{k}^0 \in \bbr^+$, $v_k, x_{k}^0 \in \bbr^d$, $\theta_{k}^0 \in \bbr$,
and where $Q_{w_{k}^0}$ satisfies equation \eqref{equa-Qw}
with $w_k^0$ replacing $w$, $1\leq k\leq K$.
That is, the multi-soliton behaves exactly as a sum of  solitons
without loss of mass by dispersion.

Multi-solitons have attracted significant interest in the literature.
The construction of multi-solitons to NLS in the non-integrable case
was initiated by Merle \cite{M90} in the $L^2$-critical case.
The proof in \cite{M90} is based on the construction of
multi-bubble pseudo-conformal blow-up solutions
and on the pseudo-conformal invariance.
Afterwards,
multi-solitons in the $L^2$-subcritical and supercritical cases
were constructed, respectively,
by Martel and Merle \cite{MM06}
and by C\^ote, Martel and Merle \cite{CMM11}.
The method in \cite{MM06,CMM11} is quite different from that of \cite{M90}.
It relies on the modulation analysis
and the monotonicity of functionals adapted to multi-solitons.
This method has also been applied in the study of the stability problem of multi-solitons,
\cite{MMT06}.
Recently, for quite general nonlinearities,
the smoothness and conditional uniqueness of multi-solitons
were studied by C\^ote and Friederich \cite{CF20}
in both the $L^2$-subcritical and critical cases.
The uniqueness issue of multi-solitons to $L^2$-critical NLS,
particularly in the low  asymptotical regime,
was recently studied in  \cite{CSZ21}.

Multi-solitons are also exhibited in various models.
For the generalized Korteweg-de Vries (gKdV) equations,
we refer to the pioneering work by Martel  \cite{Ma05},
where the construction and uniqueness of multi-solitons were proved in the subcritical and critical cases.
The construction and classification in the supercritical case
were obtained by C\^ombet \cite{Co11}.
We also refer to \cite{L19} for the classification of dynamics near solitons
for the $L^2$-critical gKdV equation with a saturated perturbation.
For other dispersive equations,
see, e.g., \cite{CM14} for the Klein-Gordon equation,
\cite{K-M-R} for the Hartree equation
and \cite{MRT15} for the water-waves system.

In the stochastic case,
for the one dimensional cubic SNLS,
the small noise asymptotics of the tails of mass and timing jitter in soliton transmission
was studied by Debussche and Gautier \cite{DG08}.
Moreover, the influence of noise on the propagation of standing waves
was studied by de Bouard and Fukuizumi \cite{DF09}
for the Bose-Einstein condensation,
where the trapping potential varies randomly in time.
Quite interestingly, it was proved in \cite{DF09}
that the solution decomposes into the sum of a randomly modulated standing wave and a small remainder,
and the first order of the remainder converges to a Gaussian process,
as the amplitudes of noise tends to zero.
For the stochastic KdV equations
we refer to  \cite{BD07}  for the random modulation of solitons,
and \cite{BD09,DG10} for the exist problem from a neighborhood of solitons

The main interest of this paper is to understand the quantitative properties of
soliton dynamics for SNLS.

Recently,
several typical blow-up dynamics have been constructed for SNLS.
Critical mass blow-up solutions were constructed in \cite{SZ19},
which yields that the mass of the ground state is exactly the
threshold of the global existence and blow-up for SNLS.
The loglog blow-up solutions and the multi-bubble pseudo-conformal blow-up solutions
were  constructed in \cite{FSZ20} and \cite{SZ20}, respectively.
Very recently, in \cite{RSZ21}
we also constructed the multi-bubble Bourgain-Wang type solutions,
which behave asymptotically as a sum of pseudo-conformal blow-up solutions
and a smooth residue.
This, in particular, provides examples for the mass quantization conjecture (\cite{MR05}).
Another interesting outcome is the existence of non-pure multi-solitons to $L^2$-critical NLS,
which behave as a sum of multi-solitons plus a scattering  part,
predicted by the  soliton resolution conjecture.

It should be mentioned that,
one major difficulty in our construction of stochastic multi-solitons
is the absence of pseudo-conformal invariance.
Unlike in the deterministic case \cite{M90,RSZ21},
the existence of stochastic multi-solitons
cannot be obtained from that of stochastic multi-bubble blow-up solutions
in \cite{RSZ21,SZ20}.

In the present work,
we provide path-by-path constructions of stochastic multi-solitons to SNLS.
More precisely,
in both the $L^2$-subcritical and critical cases $1<p\leq 1+\frac 4d$,
for $\bbp$-a.e. $\omega \in \Omega$,
the multi-solitons to \eqref{equa-X-rough} are constructed and,
up to a random phase transformation,
behave asymptotically as a sum of $K$ solitary waves,
where $K$ is any given finite number.
Quite interestingly,
the decay rate of the corresponding asymptotical behavior
can be of either exponential or polynomial type,
which is  closely related to that of the
spatial functions $\{\phi_l\}$ and temporal functions $\{g_l\}$ in the noise.
To the best of our knowledge,
this provides the first explicit constructions of multi-solitons
to SNLS.

Our strategy of proof is mainly based on the rescaling approach
and  the modulation method.

The rescaling approach in \cite{HRZ18} relies on two types of Doss-Sussman transforms,
which enable us to study the large time behavior of solutions
by transforming the original equation to random Schr\"odinger equations,
for which the sharper {\it pathwise analysis} can be performed.
This method is actually quite robust for many other stochastic partial differential equations,
see, e.g., \cite{ABD21,BDR09,BF12} and references therein.
The solvability relationship between two equations via the transform is indeed nontrivial in infinite dimensional spaces.
An interesting outcome here is, that we extend
the solvability in the critical case for dimensions $d=1,2$  in \cite{SZ19}
to the entire (sub)critical regime for all dimensions.

Let us mention that,
the pathwise analysis in \cite{HRZ18} is based on the stability of scattering
which, however, is quite difficult for the multi-solitons
in the subcritical case (\cite{MMT06}) and even fails in the critical case.
Instead, we construct multi-solitions
in a direct way
by using the modulation method
and analysing the Lyapunov type functional constructed by Martel, Merle and Tsai \cite{MMT06}.

It is also worth noting that,
in order to treat the subcritical and critical cases in a uniform manner,
the soliton profiles in the geometrical decomposition here exhibit
a quite unified structure in both cases,
which is different from the works \cite{CF20,MM06}.
The unstable direction  ${\rm Re}\<\wt R_k,\ve\>$ (see Corollary \ref{Cor-Rkve-cri} below)
is not involved in the geometrical decomposition.
Instead, it will be controlled by the almost
conservation of the local mass.
This permits to fix the frequency $w_k\equiv w_k^0$ in the subcritical case
and, in particular, simplifies the derivation of the time-independent main part  of
the Lyapunov type functional.
In the critical case, though the frequency $w_k$ varies with time,
the main part keeps still time independent,
due to the scaling invariance and the key Pohozaev identity.
\vspace{1ex}

{\bf Notations.}
For any $x=(x_1,\cdots,x_d) \in \bbr^d$
and any multi-index $\nu=(\nu_1,\cdots, \nu_d)$,
let $|\nu|= \sum_{j=1}^d \nu_j$,
$\<x\>=(1+|x|^2)^{1/2}$,
$\partial_x^\nu=\partial_{x_1}^{\nu_1}\cdots \partial_{x_d}^{\nu_d}$
and
$\<\na\>=(I-\Delta)^{1/2}$.

For $1\le p\le\9$,
$L^p = L^p(\bbr^d)$ is
the space of $p$-integrable (complex-valued) functions
endowed with the norm $\|\cdot\|_{L^p}$,
and $W^{s,p}$ denotes the standard Sobolev space,
$s\in \bbr$.
In particular,
$L^2(\bbr^d)$ is the Hilbert space endowed with the inner product
$\<v,w\> =\int_{\bbr^d} v(x)\bar w(x)dx$,
and $H^s:= W^{s,2}$.
As usual,
$L^q(0,T;L^p)$ means the space of all integrable $L^p$-valued functions $f:(0,T)\to L^p$ with the norm
$\|\cdot\|_{L^q(0,T;L^p)}$,
and $C([0,T];L^p)$ denotes the space of all $L^p$-valued continuous functions on $[0,T]$ with the sup norm over $t$.
For any H\"older continuous function $f\in C^\a(I)$, $\a>0$ and $I\subseteq \bbr^+$,
we write $\delta f_{st} := f(t)-f(s)$, $s,t\in I$,
and $\|f\|_{\a, I} := \sup_{s,t\in I,s\not =t} \frac{|\delta f_{st}|}{|s-t|^\a}$.
Let $C_c^\9$ be the space of all compactly supported smooth functions on $\bbr^d$.
We also set $\dot{g}:= \frac{d}{dt}g$ for any $C^1$ functions.

The symbol $u =
\mathcal{O}(v)$ means that $|u/v|$ stays bounded, and $v_n=o(1)$
means $|v_n|$ tends to zero as $n\to \9$.
Throughout this paper,
we use $C,\delta$ for various constants that may
change from line to line.

\subsection{Formulation of main results}  \label{Subsec-Main}

To begin with,
we first recall some basic notions of controlled rough paths.
For more details of the theory of (controlled) rough path we refer to \cite{FH14,G04} and the references therein.

Given a path $X \in C^\a([0,T]; \bbr^N)$, $0<T<\9$,
we say that $Y \in C^\a([0,T]; \bbr^N)$ is controlled by $X$ if
there exists $Y' \in C^\a([0,T]; \bbr^{N\times N})$ such that
the remainder term $R^Y$ implicitly given by
\begin{align*}
   \delta Y_{j,st} = \sum\limits_{k=1}^N Y'_{jk}(s) \delta X_{k,st} + \delta R^Y_{j,st}
\end{align*}
satisfies $\|R_j^Y\|_{2\a, [s,t]} <\9$,
$1\leq j\leq N$.
This defines the controlled rough path $(Y,Y')\in \mathscr{D}_X^{2\a}([0,T]; \bbr^N)$,
and $Y'$ is the so called Gubinelli's derivative.

One typical example is that, the $N$-dimensional Brownian motions
$B=(B_j)_{j=1}^N$ can be enhanced to a rough path
${\bf B} = (B, \mathbb{B})$,
where $\mathbb{B}_{jk,st}:= \int_s^t \delta B_{j,sr} dB_k(r)$
with the integration taken in the sense of It\^o,
$\delta B_{j,st} = B_j(t) - B_j(s)$.
It is known that $\|B\|_{\a,[0,T]} <\9$, $\|\mathbb{B}\|_{2\a, [s,t]}<\9$, $\bbp$-a.s.,
where $\frac 1 3 <\a<\frac 12$
(see \cite[Section 3.2]{FH14}).

Given a path $Y$ controlled by the $N$-dimensional Brownian motion,
i.e., $Y\in \mathscr{D}_B^{2\a}([S,T]; \bbr^N)$,
$0<S<T<\9$,
we can define the rough integration of $Y$ against ${\bf B}=(B,\mathbb{B})$ as follows
(see \cite[Theorem 4.10]{FH14}),
for each $1\leq k\leq N$,
\begin{align} \label{def-rp}
   \int_S^T Y_k(r) dB_k(r)
   := \lim\limits_{|\mathscr{P}|\to 0} \sum\limits_{i=0}^{n-1}
       \(Y_k(t_i) \delta B_{k,t_it_{i+1}}
         + \sum\limits_{j=1}^N Y'_{kj}(t_i) \mathbb{B}_{jk,t_it_{i+1}}\),
\end{align}
where $\mathscr{P}:= \{t_0, t_1,\cdots, t_n\}$ is a partition of $[S,T]$
so that $t_0 =S$, $t_n=T$,
$|\mathscr{P}|:= \max_{0\leq i\leq n-1} |t_{i+1} - t_i|$.

Throughout this paper we assume that

{\bf (A0)}
  For every $1\leq l\leq N$,
  \begin{align} \label{asymp-flat}
       \lim\limits_{|x|\to \9} |x|^2 |\partial_x^\upsilon \phi_l(x)|=0, \ \ \upsilon \not =0.
  \end{align}

{\bf (A1)}
  For every $1\leq l\leq N$, $\{g_l\}$
  are $\{\mathscr{F}_t\}$-adapted continuous processes
  and controlled by the Brownian motions $\{B_l\}$,
  i.e., $\{g_l\} \subseteq \mathscr{D}_B^{2\a}(\bbr^+; \bbr^N)$
  with the Gubinelli  derivative $\{g'_{lj}\}_{j,l=1}^N$.
  In addition,
  $\phi_l$ and $g_l$ satisfy one of the following two cases:

  {\rm Case (I):}
  $g_l\in L^2(\bbr^+)$, $\bbp-a.s$., and
  there exists $c_l>0$ such that
  \begin{align} \label{phil-exp-decay}
     \sum\limits_{|\nu|\leq 4} |\partial^\nu \phi_l(x)| \leq C e^{-c_l |x|}.
  \end{align}

  {\rm Case (II):}
  $\bbp$-a.s., $g_l\in L^2(\bbr^+)$
  and there exists $c^*>0$ such that for $t$ large enough,
  \begin{align}   \label{gl-t2-decay}
     \int_t^\9 g_l^2 ds \log \(\int_t^\9 g_l^2 ds \)^{-1} \leq \frac {c^*}{t^2}.
  \end{align}
  \qquad \qquad \ \ \ \ In addition, let $\upsilon_*\in \mathbb{N}$. $\phi_l$ satisfies that
  \begin{align} \label{phil-poly-decay}
     \sum\limits_{|\upsilon|\leq 4} |\partial^\upsilon \phi_l(x)| \leq C |x|^{-\upsilon_*}.
  \end{align}

\begin{remark}
{\rm Case (I)} and {\rm Case (II)}
correspond, respectively, to the exponential and polynomial decay rates of noises.
Let us also mention that, the asymptotics \eqref{gl-t2-decay} is closely related to the
Levy H\"older continuity of Brownian motions.
See the proof of \eqref{B*-Levy-contin} below.
\end{remark}

For simplicity,
we mainly focus on the case $c_l=1$, $1\leq l\leq N$,
and denote by $\phi$ the decay functions in \eqref{phil-exp-decay} and \eqref{phil-poly-decay},
i.e.,
\begin{align} \label{phi-def}
   \phi(x) := \left\{
               \begin{array}{ll}
                 e^{-|x|}, & \hbox{in {\rm Case (I)};} \\
                 |x|^{-\upsilon_*}, & \hbox{in {\rm Case (II)}.}
               \end{array}
             \right.
\end{align}

The solution to \eqref{equa-X-rough} is taken in the sense of controlled rough path.
\begin{definition} \label{X-roughpath-def}
Let $1<p\leq 1+\frac 4d$, $d\geq 1$.
We say that $X$ is a solution to \eqref{equa-X-rough} on $[T_0,\tau^*)$,
where $T_0, \tau^*\in (0,\9]$ are random variables,
if $\bbp$-a.s. for any $\vf\in C_c^\9$,
$t \mapsto \<X(t), \vf\>$ is continuous on $[T_0,\tau^*)$
and for any $T_0 \leq s<t<\tau^*$,
\begin{align*}
   \<X(t)-X(s), \vf\>
   - \int_s^t \<i X, \Delta\vf\>  + \<i|X|^{p-1} X, \vf\>  - \< \mu X, \vf\> dr
   = \sum\limits_{k=1}^N \int_s^t \<i\phi_k g_k X, \vf\> dB_k(r).
\end{align*}
Here the integral $\int_s^t \<i\phi_k g_k X, \vf\> d B_k(r)$
is taken in the sense of controlled rough path
with respect to the rough paths $(B, \mathbb{B})$,
that is,
$\<i\phi_k g_k X, \vf\> \in C^\a([s,t])$,
\begin{align} \label{phikX-st}
   \delta (\<i\phi_k X, \vf\>)_{st}
   =  \sum\limits_{j=1}^N \<-\phi_j\phi_k g_j(s) g_k(s) X(s) + i \phi_k g'_{kj}(s) X(s), \vf\> \delta B_{j,st}
     + \delta R_{k,st},
\end{align}
and
$\|\<\phi_j\phi_k g_j g_kX, \vf\> \|_{\a, [s,t]}
+ \|\<\phi_k g'_{kj} X, \vf\> \|_{\a, [s,t]}  <\9, \ \
   \|R_k\|_{2\a, [s,t]} <\9,$
$\a\in (\frac 13, \frac 12)$.
\end{definition}

The $H^1$ local solvability of \eqref{equa-X-rough}
can be proved by using the fixed point arguments as in \cite[Theorems 1.2 and 2.1]{BRZ16}.
The key ingredients are the
Strichartz and local smoothing estimates for Schr\"odinger equations with lower order perturbations,
due to the asymptotical flatness condition \eqref{asymp-flat}.
See, e.g., \cite{MMT08,Z17}.
It also relies on Theorem \ref{Thm-Equiv-X-u} below,
which relates equations \eqref{equa-X-rough} and \eqref{equa-u-low}
through the Doss-Sussman type transform \eqref{resc-t}.

The main result of this paper is formulated in Theorem \ref{Thm-Soliton-SNLS} below,
concerning  the large time soliton dynamics of \eqref{equa-X-rough}
in both the $L^2$-subcritical and critical cases.

\begin{theorem} \label{Thm-Soliton-SNLS}
Consider \eqref{equa-X-rough} with $1 <p \leq  1+\frac 4d$, $d\geq 1$.
Let $w_k^0>0$, $\theta_k^0 \in \bbr$,
$x_k^0 \in \bbr^d$,
$v_k \in \bbr^d\setminus\{0\}$, $1\leq k\leq K$, such that
$v_j\not = v_k$ for any $j\not = k$.
Assume $(A0)$ and $(A1)$ with $\upsilon_*$ sufficiently large in {\rm Case (II)}.
Then, for $\bbp$-a.e. $\omega\in \Omega$,
there exists $T_0= T_0(\omega)$ sufficiently large
and $X_*(\omega)\in H^1$,
such that there exists an $H^1$ solution $X(\omega)$ to \eqref{equa-X-rough} on $[T_0,\9)$
satisfying
$X(\omega, T_0) = X_*(\omega)$ and
\begin{align}  \label{X-Rk-asym}
     \|e^{-W_*(t)}X(t) - \sum\limits_{k=1}^K R_k(t) \|_{H^1} \leq C \int_{t}^{\infty}s\phi^{\half}(\delta s)ds, \ \ t\geq T_0.
\end{align}
Here,
\begin{align} \label{W*-def}
  W_*(t,x) =- \sum\limits_{l=1}^N \int_t^\9 i \phi_l(x) g_l(s) d B_l(s),
\end{align}
$\{R_k\}$ are the solitary waves given by \eqref{Rk-def},
$\phi$ is the decay function in \eqref{phi-def}
and $C,\delta>0$.

Moreover, in the $L^2$-subcritical case $1<p<1+\frac 4d$,
there exists a solution $X$ to \eqref{equa-X-rough} on
the whole time regime $[0,\9)$,
satisfying the asymptotic behavior \eqref{X-Rk-asym}.

\end{theorem}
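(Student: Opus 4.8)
The plan is to combine the Doss--Sussman rescaling of \cite{HRZ18} with the modulation and Lyapunov functional method of \cite{MM06,MMT06,CF20}, carried out path-by-path; since pseudo-conformal invariance is unavailable here, so that the multi-soliton cannot be obtained from a multi-bubble blow-up solution as in \cite{M90,RSZ21}, the construction is performed directly. First I would pass to the variable $u=e^{-W_*}X$, applying the Doss--Sussman type transforms of \cite{HRZ18} (see Theorem \ref{Thm-Equiv-X-u}) to turn \eqref{equa-X-rough} into a random Schr\"odinger equation of the form \eqref{equa-u-low} whose lower-order, spatially decaying, time-dependent coefficients are explicit functionals of the Brownian paths. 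Because the rough integrals have been absorbed into $W_*$, the resulting equation can be analysed pathwise for $\bbp$-a.e.\ $\omega$; the asymptotic flatness \eqref{asymp-flat} supplies the Strichartz and local smoothing bounds needed for $H^1$ local well-posedness, so it suffices to build a multi-soliton $u$ for the transformed equation with the rate in \eqref{X-Rk-asym}. To that end I fix $T_n\uparrow\infty$ and solve the transformed equation backward on $[T_0,T_n]$ with final datum $u_n(T_n)=\sum_{k=1}^K R_k(T_n)$; the heart of the matter is the uniform bound
\[
  \Big\| u_n(t)-\sum_{k=1}^K R_k(t)\Big\|_{H^1}\le C\int_t^\infty s\,\phi^{\half}(\delta s)\,ds,\qquad t\in[T_0,T_n],
\]
with $T_0$ and $C$ independent of $n$. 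Granting it, a weak-$H^1$ compactness argument extracts a limit $u$ of $u_n$, and undoing the transforms yields the solution $X$ and \eqref{X-Rk-asym}.

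To prove the uniform bound I would run a bootstrap on the set of times where it holds with a larger constant. There I introduce a modulated decomposition $u_n(t)=\sum_{k=1}^K\widetilde R_k(t)+\varepsilon(t)$ with a unified structure in both cases, each $\widetilde R_k$ a soliton with time-dependent parameters and $\varepsilon$ subject to suitable orthogonality conditions. In the subcritical case the frequencies are frozen, $w_k\equiv w_k^0$, and the unstable direction $\Rt\<\widetilde R_k,\varepsilon\>$ is left out of the decomposition (cf.\ Corollary \ref{Cor-Rkve-cri}), being controlled instead through the almost-conservation of the local mass; in the critical case $w_k$ varies, but the main part of the functional stays time-independent thanks to scaling invariance and the Pohozaev identity. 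The central object is the Martel--Merle--Tsai Lyapunov functional, localized near each soliton by cutoffs $\psi_k$ transported with speed $v_k$,
\[
  \mathcal{F}(t)=\sum_{k=1}^K\int_{\bbr^d}\Big(\tfrac12|\na u_n|^2+\tfrac{1}{2w_k^2}|u_n|^2-\tfrac{1}{p+1}|u_n|^{p+1}-\tfrac12 v_k\cdot\I(\bar u_n\na u_n)\Big)\psi_k\,dx\;+\;(\text{noise corrections}),
\]
where the noise corrections are tailored to cancel the worst contributions of the random potential and of the discrepancy between $\sum_k R_k$ and a genuine solution of the transformed equation.

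Using the modulation equations, the coercivity of the linearized operator on the orthogonal complement, the separation $v_j\ne v_k$, and the decay hypotheses \eqref{phil-exp-decay}, \eqref{gl-t2-decay}, \eqref{phil-poly-decay} together with $g_l\in L^2(\bbr^+)$, one shows that $\mathcal{F}$ is almost monotone: its derivative is bounded by exponentially small soliton interactions and by noise-driven errors whose time integrals are controlled by $\int_t^\infty s\,\phi^{\half}(\delta s)\,ds$. Integrating from $t$ to $T_n$, where $\mathcal{F}$ vanishes, and invoking the coercivity $\mathcal{F}(t)\gtrsim\|\varepsilon(t)\|_{H^1}^2$ modulo the excluded directions (the latter being bootstrapped via local-mass control), one closes the uniform bound, with $T_0$ chosen large. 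Finally, in the subcritical case, the global $H^1$ well-posedness of SNLS (e.g.\ \cite{BRZ16}) allows one to solve \eqref{equa-X-rough} backward from $X(T_0)=X_*\in H^1$ onto $[0,T_0]$, extending the solution to $[0,\infty)$ while preserving \eqref{X-Rk-asym}.

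I expect the main obstacle to be this almost-monotonicity step. One must design the noise-dependent corrections to $\mathcal{F}$ so that the singular terms generated by the random potential and by $W_*$ acting on the solitons cancel, leaving residual errors integrable in time with exactly the rate of \eqref{X-Rk-asym}, and at the same time keep the modulation parameters under uniform control over the unbounded interval $[T_0,T_n]$. In the critical case there is the additional difficulty that the frequencies $w_k$ must be allowed to move without destroying the time-independence of the main part of $\mathcal{F}$, which is precisely where the scaling invariance, the Pohozaev identity, and the local-mass control of the unstable mode are indispensable; in the subcritical case the same local-mass argument is what lets one drop the unstable direction from the geometric decomposition and freeze the frequencies.
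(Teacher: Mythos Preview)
Your proposal is correct and follows essentially the same architecture as the paper: two Doss--Sussman transforms reducing to the random equation \eqref{equa-u-low}, backward approximating solutions $u_n$ with final data $R(T_n)$, a geometrical decomposition with the orthogonality conditions you describe (frozen $w_k$ in the subcritical case, varying $w_k$ in the critical case with the unstable direction handled by local-mass almost-conservation), the Martel--Merle--Tsai functional, a bootstrap yielding uniform bounds, and a compactness/limit argument, with the subcritical global extension done afterwards.

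Two small tactical differences are worth flagging. First, the paper's Lyapunov functional \eqref{Wei-def} keeps the energy \emph{global} and localizes only the mass and momentum; your version localizes every term with $\psi_k$. Second, and more importantly, the paper adds \emph{no} noise corrections to the functional. It uses the plain MMT functional and simply estimates the noise-induced contributions to $\frac{d}{dt}E$, $\frac{d}{dt}I_k$, $\frac{d}{dt}M_k$ directly (Propositions \ref{Prop-Mass-mom-cri} and \ref{Prop-energy-cri}); because the solitons are localized near $v_k t$ while the spatial functions $\phi_l$ decay at the rate $\phi$, these terms are already of size $B_*(t)(\phi(\delta_1 t)+\|\ve\|_{H^1}^2)$ and integrate to the required rate without any cancellation machinery. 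So the obstacle you anticipate---designing noise-dependent correctors---does not in fact arise: the decay hypotheses $(A1)$ are strong enough that the raw error terms close the bootstrap.
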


\begin{remark}
$(i)$. To the best of our knowledge,
Theorem \ref{Thm-Soliton-SNLS} provides the first quantitative construction of multi-solitons to \eqref{equa-X-rough} in the stochastic case.
It would be also interesting to note that,
the decay rate in \eqref{X-Rk-asym} can be of either exponential or polynomial type
in {\rm Cases (I)} and {\rm (II)}, respectively,
which reflects the noise effects on the soliton dynamics.

$(ii)$. In comparison with the scattering results in \cite{HRZ18,Z18},
where the solutions behave asymptotically like a free Schr\"odinger flow
in the defocusing (sub)critical cases,
the asymptotics \eqref{X-Rk-asym} in Theorem \ref{Thm-Soliton-SNLS} gives
a different asymptotic behavior
in the focusing case,
namely, the solutions do not scatter at infinity
and may even propagate as any finitely many decoupled solitary waves.
\end{remark}

The strategy of proof is mainly based on the rescaling approach in \cite{HRZ18}
which  relies on two types of Doss-Sussman transformations,
and on the modulation method in \cite{CF20, MM06, MMT06}.

One of the main advantages of Doss-Sussman type transform is,
that the sharper pathwise analysis can be performed to the resulting random solutions,
which  is quite robust in the study of stochastic partial differential equations.
We refer to, e.g.,
\cite{ABD21} for the stochastic Camassa-Holm equation
and
\cite{BDR09} for the stochastic porous media equation.
For the case of SNLS,
we refer to \cite{BF12} for SNLS with potentials multiplied by a temporal real-valued white noise,
\cite{BRZ16.1} for the stochastic logarithmic Schr\"odinger equation.
See also \cite{BRZ18, Z20} for optimal control problems,
\cite{Z18} for the defocusing critical case,
and \cite{FSZ20,RSZ21,SZ19,SZ20} for the construction of (multi-bubble) blow-up solutions.

Here we first apply the Doss-Sussman type transform
\begin{align} \label{resc-t}
   v:= e^{-W}X
\end{align}
with
\begin{align} \label{W-def}
   W(t,x) := \sum_{k=1}^N \int_0^t i \phi_k(x) g_k(s) d B_k(s).
\end{align}
to reduce \eqref{equa-X-rough} to an equation with random lower order perturbations
\begin{equation}    \label{equa-v-RNLS}
\left\{ \begin{aligned}
   & i\partial_t v + (\Delta + b\cdot \na + c) v + |v|^{p-1} v =0, \\
   & v(T_0) = e^{-W(T_0)} X_0,
\end{aligned} \right.
\end{equation}
where the coefficients of low order perturbations
\begin{align}
   b (t,x) =& 2 \na W (t,x)
           = 2 i \sum\limits_{l=1}^N \int_0^t \na \phi_l(x) g_l(s) d B_l(s), \label{b-def} \\
   c (t,x) =& \sum\limits_{j=1}^d (\partial_j W(t,x))^2 + \Delta W (t,x), \nonumber \\
           =& -\sum\limits_{j=1}^d \(\sum\limits_{l=1}^N \int_0^t \partial_j \phi_l(x) g_l(s) d B_l(s) \)^2
              + i \int_0^t \Delta \phi_l(x) g_l(s) d B_l(s). \label{c-def}
\end{align}

It should be mentioned that,
the solvability between two equations via the Doss-Sussman type transform
is indeed nontrivial in infinite dimensional spaces.

The $H^1$ local solvability of equation \eqref{equa-v-RNLS}
can be proved as in  \cite[Theorem 2.1 and Proposition 2.5]{BRZ16},
relying on the Strichartz and local smoothing estimates for
the Laplacian with lower order perturbations, due to Assumption $(A0)$.

Furthermore, the solvability of equation \eqref{equa-X-rough}
can be inherited from that of equation \eqref{equa-v-RNLS}
by Theorem \ref{Thm-Equiv-X-u} below,
which in particular extends the $L^2$-critical result for dimensions $d=1,2$ in \cite{SZ19}
to the whole $L^2$-(sub)critical regime for all dimensions.

\begin{theorem} \label{Thm-Equiv-X-u}
Let $1\leq p\leq 1+\frac 4d$, $d\geq 1$.
Let $v$ be the solution to \eqref{equa-v-RNLS} on $[T_0,\tau^*)$
with $v(T_0) = v_0\in H^1$,
where $T_0,\tau^* \in (0,\9]$ are random variables.
Then, $\bbp$-a.s.,
$X := e^{W} v$ is the solution to equation \eqref{equa-X-rough} on $[T_0,\tau^*)$
in the sense of Definition \ref{X-roughpath-def} above.
\end{theorem}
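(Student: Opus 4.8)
The plan is to prove that the Doss--Sussman transform $X := e^W v$ intertwines solutions of the random PDE \eqref{equa-v-RNLS} and the rough SNLS \eqref{equa-X-rough}, by first establishing the necessary pathwise regularity of the factor $e^W$ and then differentiating the product in the weak (tested-against-$\varphi$) formulation. First I would fix a generic $\omega$ for which the Brownian rough path $(B,\mathbb{B})$ and the controlled processes $\{g_l\}$ behave regularly, and record the structure of $W$ from \eqref{W-def}: for each fixed $x$, $W(\cdot,x)$ is itself a controlled rough path with respect to $B$, with Gubinelli derivative $W'_{\cdot k}(t,x) = i\phi_k(x)g_k(t)$, and $\delta W_{st}(x) = \sum_k i\phi_k(x)g_k(s)\delta B_{k,st} + (\text{remainder of order }2\alpha)$. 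Because the $\phi_k$ and their derivatives are bounded (Assumption $(A0)$, and in fact $C_b^\infty$ by hypothesis), $W(t,\cdot)$, $\nabla W(t,\cdot)$, $\Delta W(t,\cdot)$ all lie in $L^\infty_x$ uniformly on compact time intervals, so multiplication by $e^{W}$ and its spatial derivatives is a bounded operation on $H^1$ and, more importantly, $\langle e^{W(t)}v(t),\varphi\rangle = \langle v(t), e^{\overline{W(t)}}\varphi\rangle$ with $e^{\overline W}\varphi \in C_c^\infty$-like test functions (compactly supported, smooth in $x$, continuous in $t$). Note $W$ is purely imaginary in the leading Stratonovich sense but $c$ in \eqref{c-def} carries the Itô correction; keeping careful track of $e^{\overline W}$ versus $e^{-W}$ is where the algebra must be done honestly.

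The core computation is a rough-path product (Leibniz) rule. Testing the definition of $v$ as an $H^1$ solution of \eqref{equa-v-RNLS} against the time-dependent test function $\psi(t,x) := e^{\overline{W(t,x)}}\varphi(x)$ is not immediate because $\psi$ depends on $t$ through a rough path; instead I would work on a partition $\mathscr P = \{t_i\}$ of $[s,t]$, write the telescoping sum
\begin{align*}
\langle X(t)-X(s),\varphi\rangle = \sum_i \langle e^{W(t_{i+1})}v(t_{i+1}) - e^{W(t_i)}v(t_i),\varphi\rangle
\end{align*}
and split each increment as $\langle (e^{W(t_{i+1})}-e^{W(t_i)})v(t_i),\varphi\rangle + \langle e^{W(t_{i+1})}(v(t_{i+1})-v(t_i)),\varphi\rangle$. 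For the second group I substitute the (mild/weak) increment of $v$ from \eqref{equa-v-RNLS}, i.e. $\langle v(t_{i+1})-v(t_i), e^{\overline{W(t_i)}}\varphi\rangle \approx \int_{t_i}^{t_{i+1}} \langle i(\Delta v + b\cdot\nabla v + cv) + i|v|^{p-1}v, e^{\overline{W(t_i)}}\varphi\rangle\,dr$ up to an error controlled by the $\alpha$-Hölder continuity of $v$ times the $\alpha$-Hölder continuity of $W$ (hence $o(1)$ as $|\mathscr P|\to 0$ when summed, since $2\alpha>1$... actually $2\alpha<1$, so here I must instead keep the increment of $v$ exact and only freeze $W$, incurring an error $\|\delta v\|_{?}\|\delta W\|_\alpha$ that sums to zero because $v$ is continuous and $W$ is $\alpha$-Hölder and the mesh $\to0$). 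For the first group I expand $e^{W(t_{i+1})}-e^{W(t_i)} = e^{W(t_i)}(\delta W_{t_it_{i+1}} + \tfrac12(\delta W_{t_it_{i+1}})^2 + \dots)$ and insert the controlled-rough-path expansion $\delta W_{t_it_{i+1}}(x) = \sum_k i\phi_k(x)g_k(t_i)\delta B_{k,t_it_{i+1}} + \delta R^W_{k,t_it_{i+1}}$. The quadratic term $\tfrac12(\delta W)^2$ combines with the cross-term coming from the rough integral (the $\mathbb B$-part in \eqref{def-rp}) and with the $\langle cv,\cdot\rangle\,dr$ contribution; matching these is exactly the statement that $c = \sum_j(\partial_j W)^2 + \Delta W$ supplies the correct Itô-to-Stratonovich / rough correction. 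Similarly $b = 2\nabla W$ is produced by $\Delta(e^W v) = e^W(\Delta v + 2\nabla W\cdot\nabla v + ((\nabla W)^2+\Delta W)v)$, which is how the $b$ and part of the $c$ terms in \eqref{equa-v-RNLS} were chosen in the first place; running this identity backwards shows $\langle iX,\Delta\varphi\rangle$ reassembles correctly after integration by parts.

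After passing to the limit $|\mathscr P|\to 0$, the sum of first-group terms converges by definition \eqref{def-rp} to $\sum_k\int_s^t \langle i\phi_k g_k X,\varphi\rangle\,dB_k(r)$ with the required Gubinelli derivative, because $\langle i\phi_k X,\varphi\rangle = \langle i\phi_k e^W v,\varphi\rangle$ is controlled by $B$ with derivative read off from the expansion above, and its remainder is $2\alpha$-Hölder since $v$ is $\alpha$-Hölder in time (from \eqref{equa-v-RNLS}, $\partial_t v\in$ some negative Sobolev space uniformly, giving $\alpha$-Hölder, in fact Lipschitz, control after testing) and $\phi_k,g_k$ are regular; this verifies \eqref{phikX-st} and the Hölder bounds in Definition \ref{X-roughpath-def}. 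The sum of second-group terms converges to $\int_s^t \langle iX,\Delta\varphi\rangle + \langle i|X|^{p-1}X,\varphi\rangle - \langle\mu X,\varphi\rangle\,dr$ once one checks the pointwise-in-$r$ identities $e^{\overline W}e^W = 1$ on the $|X|^{p-1}X$ term (so $e^{W}|v|^{p-1}v = |X|^{p-1}X$, using $|e^W|=1$ modulo the Itô drift, which is precisely why $-\mu X\,dt$ appears: $-\mu = \mathrm{Re}\,c$ after the purely imaginary part is absorbed) and the integration-by-parts matching for the Laplacian. Continuity of $t\mapsto\langle X(t),\varphi\rangle$ on $[T_0,\tau^*)$ follows from continuity of $v$ and of $W(\cdot,x)$. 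I expect the main obstacle to be the bookkeeping of the quadratic rough correction: showing rigorously, with uniform-in-mesh error estimates, that the second-order term $\tfrac12\langle e^{W(t_i)}(\delta W_{t_it_{i+1}})^2 v(t_i),\varphi\rangle$ plus the $\mathbb B$ cross-terms from \eqref{def-rp} telescope to exactly $\int_s^t\langle (c+\mu)v\,\text{-type terms}\rangle$, i.e. that the transform \eqref{resc-t} is consistent at the level of the rough-path correction and not merely at the level of formal Stratonovich calculus — this requires the sewing lemma and careful tracking of which terms are $o(|\mathscr P|)$ versus genuinely contributing, complicated by $\alpha<1/2$ so that $(\delta B)^2$ is not negligible. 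A secondary technical point is justifying the frozen-coefficient approximations in $H^1$ (not just weakly), using the boundedness of $\phi_k$ and derivatives from $(A0)$ to keep all multiplication operators bounded on $H^1$ uniformly on compact time intervals.
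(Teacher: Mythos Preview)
Your overall plan is the same as the paper's: expand $\langle\delta X_{st},\varphi\rangle$ via a product rule for $X=e^{W}v$, Taylor-expand $(\delta e^{W})_{st}$ to second order using the controlled structure of $W$, and feed in the drift of $v$ from \eqref{equa-v-RNLS}. The paper does this on a single increment $[s,t]$ and proves the local expansion to order $o(t-s)$ directly, splitting $\delta X_{st}=(\delta e^{W})_{st}v(s)+e^{W(s)}\delta v_{st}+(\delta e^{W})_{st}\delta v_{st}$; your partition formulation is equivalent once one checks the cross term is $o(t-s)$.

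There is, however, a concrete bookkeeping error that would prevent the algebra from closing. You assert that ``$-\mu=\operatorname{Re} c$'' and that $c$ ``supplies the correct It\^o-to-Stratonovich / rough correction''. These are two different mechanisms and they do not mix. The coefficients $b=2\nabla W$ and $c=\sum_j(\partial_j W)^2+\Delta W$ are purely \emph{spatial}: they are exactly what makes $e^{-W}\Delta(e^{W}v)=(\Delta+b\cdot\nabla+c)v$, so that $e^{W(s)}$ times the full drift of $v$ reassembles into $i\Delta X+i|X|^{p-1}X$ with nothing left over. The term $-\mu X\,dt$ comes \emph{entirely} from the second-order Taylor expansion $(\delta e^{W})_{st}=e^{W(s)}\bigl(\delta W_{st}+\tfrac12(\delta W_{st})^2+\cdots\bigr)$: inserting $\delta B_{j,st}\,\delta B_{k,st}=\mathbb B_{jk,st}+\mathbb B_{kj,st}+\delta_{jk}(t-s)$ into $\tfrac12(\delta W_{st})^2$, the diagonal $(t-s)$ part yields $-\tfrac12\sum_k\phi_k^2 g_k(s)^2=-\mu(s)$, while the $\mathbb B$ parts furnish the second-order piece of the Gubinelli derivative in \eqref{phikX-st}. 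Since $W$ is purely imaginary one has $|e^{W}|=1$ exactly (not ``modulo the It\^o drift''), so $e^{W}|v|^{p-1}v=|X|^{p-1}X$ with no correction there either. Note also that $\operatorname{Re} c$ involves $\nabla\phi_l$, whereas $\mu$ involves $\phi_l$ itself, so they cannot match.

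On the remainder estimates you leave vague: the paper's device is not to prove $v$ is H\"older in $L^2$ (it is not, being a Schr\"odinger solution), but to show $r\mapsto e^{-ir\Delta}v(r)$ is H\"older in $L^2$ via Strichartz estimates on the Duhamel integral, and then combine this with $\|(1-e^{i(r-s)\Delta})\psi\|_{L^2}\le C(r-s)\|\psi\|_{H^2}$ to control the freezing-of-coefficients errors at order $o(t-s)$.
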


In order to study the large time behavior of solutions,
we use the ideas from \cite{HRZ18} to perform a second Doss-Sussman type transform.

To be precise, using the theorem on time change for continuous martingales
(cf. \cite{KS91}, Section 3.4) we regard
$ \int_0^t g_l(s) dB_l(s)$ as a
time-changed Brownian motion
$\wt B_l(s(t))$ with $s(t) =\int_0^t g_l^2(r) dr$, $\bbp$-a.s..
Then, by the $L^2$-integrability $g_l\in L^2(\bbr^+)$,
we infer that as time goes to infinity,
$s(t)$ converges to $\int_0^\9 g_l^2(r) dr$,
and thus $\int_0^t g_l(s) dB_l(s) \to \int_0^\9 g_l(s) dB_l(s)$
and $W(t) \to W(\9)$, $\bbp$-a.s.,
$1\leq l\leq N$.

Then,
as in \cite{HRZ18},
we apply a second transformation
\begin{align} \label{u-v-W*-res}
   u=e^{W(\9)}v = e^{-W_*(t)}X(t),
\end{align}
where $W_*$ is given by \eqref{W*-def},
and derive from \eqref{equa-v-RNLS}
a new equation
\begin{equation}   \label{equa-u-low}
\left\{ \begin{aligned}
   & iu_t + (\Delta + b_*\cdot \na + c_*) u + |u|^{p-1} u =0, \\
   & u(T_0) =  e^{-W_*(T_0)}X_0,
\end{aligned} \right.
\end{equation}
where the coefficients of lower order perturbations
\begin{align}
   b_* (t,x) =& 2 \na W_* (t,x)
           = - 2 i \sum\limits_{l=1}^N \int_t^\9 \na \phi_l(x) g_l(s) d B_l(s), \label{b*-def} \\
   c_* (t,x) =& \sum\limits_{j=1}^d (\partial_j W_*(t,x))^2 + \Delta W_* (t,x),  \nonumber  \\
           =& -\sum\limits_{j=1}^d \(\sum\limits_{l=1}^N \int_t^\9 \partial_j \phi_l(x) g_l(s) d B_l(s) \)^2
              - \sum\limits_{l=1}^N \int_t^\9 i \Delta \phi_l(x) g_l(s) d B_l(s). \label{c*-def}
\end{align}

The proof of Theorems \ref{Thm-Soliton-SNLS}
now can be reduced to that of the following result.

\begin{theorem} \label{Thm-Soliton-RNLS}
Consider equation \eqref{equa-u-low} with $1<p\leq 1+ \frac 4d$, $d\geq 1$.
Let $w_k^0>0$, $\theta_k^0 \in \bbr$, $x^0_k\in \bbr^d$, $v_k \in \bbr^d\setminus\{0\}$, $1\leq k\leq K$,
such that $v_j\not = v_k$ for any $j\not = k$.
Assume $(A0)$ and $(A1)$ with $\upsilon_*$ sufficiently large in {\rm Case (II)}.
Then, for $\bbp$-a.e. $\omega\in \Omega$,
there exists $T_0=T_0(\omega)$ large enough and $u_*(\omega) \in H^1$,
such that there exists a unique solution $u(\omega)\in C([T_0, \9); H^1)$
to \eqref{equa-u-low}
satisfying $u(\omega, T_0)= u_*(\omega)$ and
\begin{align}   \label{u-R-asymp}
     \|u(t) - \sum\limits_{k=1}^K R_k(t) \|_{H^1} \leq C \int_{t}^{\infty}s\phi^{\half}(\delta s)ds, \ \ t\geq T_0,
\end{align}
where $C,\delta>0$ and $\{R_k\}$ are the solitary waves given by \eqref{Rk-def}.
\end{theorem}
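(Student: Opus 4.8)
\emph{Proof proposal.} The plan is to argue path-by-path. First I would fix $\omega$ in a full-measure set on which the Brownian rough path $(B,\mathbb{B})$ obeys the L\'evy modulus of continuity, the controls $g_l$ lie in $L^2(\bbr^+)\cap\mathscr{D}_B^{2\a}(\bbr^+;\bbr^N)$, and $W(t)\to W(\9)$ as $t\to\9$ (via the time-change argument in the excerpt). On this set the transform \eqref{u-v-W*-res} makes sense and, combining the decay of $\phi_l$ in $(A1)$ with the L\'evy modulus — this is exactly where \eqref{gl-t2-decay} is used, as in the proof of \eqref{B*-Levy-contin} — one obtains weighted bounds for the perturbation coefficients of the form
$\sum_{|\nu|\le 3}\big\||x|^{j}\phi(\delta x)^{-\half}\partial^\nu b_*(t)\big\|_{L^\9}+\sum_{|\nu|\le 3}\big\||x|^{j}\phi(\delta x)^{-\half}\partial^\nu c_*(t)\big\|_{L^\9}\lesssim 1$ for suitable fixed $j$, uniformly in $t\ge T_0$ (exponential localisation in Case (I), polynomial in Case (II)). Evaluated along the soliton trajectories $x\approx v_k s$ and integrated in time, these are the only inputs carrying the noise and they produce the rate $\int_t^\9 s\,\phi^{\half}(\delta s)\,ds$; in Case (II) this is where ``$\upsilon_*$ sufficiently large'' is needed to make the integral finite and decaying.

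Next, the usual compactness scheme. For $T_n\uparrow\9$ let $u_n$ solve \eqref{equa-u-low} with final data $u_n(T_n)=\sum_{k=1}^K R_k(T_n)$; it exists on a maximal interval by the $H^1$ local theory quoted in the excerpt. On the (a priori open) set where $u_n$ stays $H^1$-close to $\sum_k R_k$ I would introduce the geometric decomposition $u_n(t)=\sum_{k=1}^K\wt R_k(t)+\ve(t)$, where $\wt R_k$ is the modulated soliton carrying time-dependent translation and phase parameters and, in the critical case $p=1+\tfrac4d$, a scaling parameter $w_k(t)$ (while $w_k\equiv w_k^0$ is frozen in the subcritical case), the parameters being fixed by orthogonality of $\ve$ to an adapted family of directions built from $\wt R_k$ and $\partial Q$. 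As stressed in the introduction, the direction $\Rt\<\wt R_k,\ve\>$ is deliberately left out of the orthogonality conditions. Differentiating the orthogonality conditions yields the modulation ODEs: the parameter velocities are $\calo(\|\ve\|_{H^1})$ plus exponentially small soliton-interaction terms (the $v_k$ being distinct, the solitons separate linearly) plus $\calo(\phi^{\half}(\delta t))$ terms forced by $b_*,c_*$.

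The heart of the argument is the monotonicity of the Martel--Merle--Tsai Lyapunov functional adapted to \eqref{equa-u-low}. I would set $\calm(t)=\sum_{k=1}^K\big(\cale_k(t)+\text{Galilei and scaling corrections}\big)$, a sum of localised energy--mass quantities built with cutoffs transported at speeds slightly detuned from the $v_k$, and prove $\tfrac{d}{dt}\calm(t)\ge -C\big(t\,\phi^{\half}(\delta t)+e^{-\delta t}\big)(1+\|\ve(t)\|_{H^1})$, i.e. almost monotonicity. The new feature relative to \cite{MM06,MMT06,CF20} is that $\tfrac{d}{dt}\calm$ now picks up extra terms from $b_*\cdot\na$ and $c_*$; the weighted decay estimates of the first step make these time-integrable with the claimed rate, and the cutoff transport absorbs, after an integration by parts, the terms generated by $\na W_*$. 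The quadratic-in-$\ve$ main part of $\calm$ is time independent — in the subcritical case because $w_k$ is frozen, in the critical case by scaling invariance together with the Pohozaev identity for \eqref{equa-Qw} — and coercive, $\calm(t)-\calm_\9\gtrsim\|\ve(t)\|_{H^1}^2-(\text{lower order})$, modulo exactly the unmodulated directions $\Rt\<\wt R_k,\ve\>$. Those are handled separately by the almost conservation of the local masses $\int\phi_k(x)|u_n(t)|^2dx$, whose drift is again $\calo(t\,\phi^{\half}(\delta t))$, using that the total mass $\|u_n(t)\|_{L^2}=\|u_n(T_n)\|_{L^2}$ is \emph{exactly} conserved by \eqref{equa-u-low} thanks to the choice of $\mu$. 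Running the backward bootstrap on $[t,T_n]$ from the vanishing data at $T_n$ gives, uniformly in $n$, $\|\ve(t)\|_{H^1}+\sum_k\big(|w_k(t)-w_k^0|+\cdots\big)\lesssim\int_t^\9 s\,\phi^{\half}(\delta s)\,ds$ on $[T_0,T_n]$ for $T_0=T_0(\omega)$ large.

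Finally, $\{u_n(T_0)\}_n$ is bounded in $H^1$; I would extract $u_n(T_0)\rightharpoonup u_*$ along a subsequence, let $u$ be the $H^1$ solution of \eqref{equa-u-low} with $u(T_0)=u_*$, and pass to the limit using continuous dependence of the $H^1$ flow on \eqref{equa-u-low} and weak lower semicontinuity of the norm, obtaining \eqref{u-R-asymp}; uniqueness of $u$ given $u_*$ is merely $H^1$ well-posedness. Theorem~\ref{Thm-Soliton-SNLS} then follows via Theorem~\ref{Thm-Equiv-X-u} and the transforms \eqref{resc-t}, \eqref{u-v-W*-res}. I expect the main obstacle to be the third step: controlling the \emph{random, time-dependent} perturbations inside the monotonicity formula — showing that every term generated by $b_*,c_*$ (and by the $t$-dependence they induce in the modulation equations) is dominated by an integrable function of $t$ with the sharp rate, while keeping the main quadratic part of $\calm$ exactly time independent in the subcritical and critical regimes simultaneously. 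The coercivity/unstable-direction bookkeeping in the critical case, where $w_k(t)$ genuinely moves, is the most delicate point.
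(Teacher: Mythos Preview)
Your proposal is correct and follows essentially the same route as the paper: approximating solutions $u_n$ with final data $R(T_n)$, the geometric decomposition with $w_k$ frozen in the subcritical case and modulated in the critical case, control of the unmodulated direction ${\rm Re}\langle\wt R_k,\ve\rangle$ via almost-conservation of the localised mass, the Martel--Merle--Tsai functional $G=2E+\sum_k\big((w_k^0)^{-2}+\tfrac{|v_k|^2}{4}\big)I_k-v_k\cdot M_k$ whose leading part is time-independent (trivially in the subcritical case, by scaling plus Pohozaev in the critical case), a backward bootstrap, and compactness. One small correction to your final step: weak $H^1$ convergence of $u_n(T_0)$ does not feed directly into $H^1$ continuous dependence --- the paper first upgrades to strong $L^2$ convergence via a uniform-integrability argument (finite propagation of mass through the cutoff), runs an $L^2$ comparison to get $u_n(t)\to u(t)$ in $L^2$, and then recovers \eqref{u-R-asymp} from the uniform bound by weak lower semicontinuity of the $H^1$ norm.
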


As mentioned above,
the absence of the pseudo-conformal symmetry
causes a major difficulty in the construction of multi-solitons
in the stochastic case.
Hence, unlike in the deterministic case,
the stochastic multi-solitons to \eqref{equa-u-low}
cannot be obtained from the multi-bubble  blow-up solutions constructed in \cite{RSZ21,SZ20}.

To overcome this problem,
we construct the multi-solitons
in a direct way by using the modulation method
in  \cite{CF20,MM06,MMT06}.

To be precise, we first obtain the geometrical decomposition of solutions
into the soliton profiles rescaled by different parameters and a remainder,
moduling suitable orthogonality conditions
corresponding to the coercivity of linearized operators around the ground state.
Unlike in \cite{CF20,MM06},
the structure of soliton profiles is of a quite unified form in the subcritical and critical cases,
which enables us to treat both cases in a uniform manner.

It is worth noting that,
the unstable direction (i.e., ${\rm Re}\<\wt R_k,\ve\>$ in \eqref{Rkve-sub} below)
is not involved in the
geometrical decomposition,
instead it will be controlled by the
almost conservation law of the local mass.
This enables us to fix the frequency parameter $w_k \equiv w_k^0$
in the geometrical decomposition in the subcritical case,
and  in particular to simplify the proof in the subcritical case.
The other geometrical parameters
will be  controlled by the modulation equations
under the orthogonality conditions.

Concerning the control of the remainder,
the crucial ingredient is the monotonicity of
the Lyapunov type functional adapted to  multi-solitons,
which was first constructed by Martel, Merle and Tsai \cite{MMT06}
in the study of stability problem of multi-solitons.
The analysis of the Lyapunov functional will be based on several controls of the local quantities.
We note that,
these functionals depend on Brownian paths in the stochastic case.
Moreover, the conservation law of energy is also destroyed by the presence of noise.
Again, the rescaling approach enables us to perform the sharp analysis
with Brownian paths fixed,
and thus to obtain the quantitative controls of the variation of functionals
in terms of Brownian paths.

Consequently,
together with the coercivity of linearized operators and bootstrap arguments,
the noise effects on the exponential or polynomial decay rate
of the remainder are derived,
which lead to the desirable stochastic multi-solitons to SNLS \eqref{equa-X-rough}
by using compactness arguments.
\vspace{1ex}

The remainder of this paper is structured as follows.
We first prove Theorem \ref{Thm-Equiv-X-u} in Section \ref{Sec-Rescal-RNLS}
which relates the solvability between two equations \eqref{equa-X-rough} and \eqref{equa-v-RNLS}.
Then, Section \ref{Sec-Geom-dec} contains the geometrical decomposition
and the estimate of modulation equations.
Section \ref{Sec-Local-Wein-cri} is mainly concerned with the control  of
several important functionals, including the local mass, local momentum, energy and the crucial
Lyapunov type functional.
Then, Section \ref{Sec-Proof-Main} is devoted to the proof of the main results,
the crucial ingredients there are the uniform estimate of remainder and modulation parameters,
based on bootstrap arguments,
and the compactness arguments.
At last, Section \ref{Sec-App}, i.e., the Appendix,
contains the coercivity of linearized operators,
the decoupling lemma for solitary waves with distinct velocities
and several technical proofs.

\section{Rescaled random equations} \label{Sec-Rescal-RNLS}

This section is mainly concerned with the proof of Theorem \ref{Thm-Equiv-X-u}
which permits to relate both equations \eqref{equa-X-rough} and \eqref{equa-v-RNLS}.

{\bf Proof of Theorem \ref{Thm-Equiv-X-u}.}
Let us fix any $T\in (T_0,\tau^*)$
and recall that $B_k \in C^\a([T_0,T])$
for any $\a\in (\frac 13, \frac 12)$, $1\leq k\leq N$,
$\bbp$-a.s..
For any $\vf\in C_c^\9$ and any $T_0\leq s<t\leq T$,
\begin{align} \label{Delta-X-st}
     \<\delta X _{st}, \vf\>
      = \<(\del e^W)_{st} u(s), \vf\>
     + \<e^{W(s)} \delta u_{st}, \vf\>
     + \<(\delta e^W)_{st} \delta u_{st}, \vf\>.
\end{align}
Below we treat each term on the R.H.S. above separately.

{\it $(i)$ Estimate of $\<(\del e^W)_{st} u(s), \vf\>$.}
Since $(g_k) \in \mathcal{D}_B^{2\a}([T_0,T]; \bbr^N)$, by \eqref{W-def},
\begin{align} \label{dWst}
   \delta W_{st}
   = \sum\limits_{k=1}^N \int_s^t i \phi_k g_k(r) d B_k(r)
   = \sum\limits_{k=1}^N  i\phi_k g_k(s) \delta B_{k,st}
     + \sum\limits_{j,k=1}^N  i \phi_k g'_{kj}(s) \mathbb{B}_{jk,st}
     +o(t-s).
\end{align}
Then, by Taylor's expansion,
\begin{align}
  (\del e^W)_{st}
  = e^{W(s)} \(\sum\limits_{k=1}^N i\phi_k g_k(s) \delta B_{k,st}
               - \frac 12 \sum\limits_{j,k=1}^N \phi_j\phi_k g_j(s)g_k(s)\delta B_{j,st}\delta B_{k,st}
                  + \sum\limits_{j,k=1}^N i \phi_k g'_{kj}(s) \mathbb{B}_{jk, st} \)
               +o(t-s).
\end{align}
Taking into account (see \cite[Section 3.3]{FH14}, \cite[p.9]{RZZ19})
\begin{align}
   \delta B_{j,st}\delta B_{k,st}
   = \mathbb{B}_{jk,st} + \mathbb{B}_{kj,st}
     + \delta_{jk}(t-s),
\end{align}
we thus obtain
\begin{align}
   (\del e^W)_{st}
   =& e^{W(s)} \bigg(-\mu (t-s)
               + \sum\limits_{k=1}^N i\phi_k g_k(s) \delta B_{k,st} \nonumber \\
    & \qquad  + \sum\limits_{j,k=1}^N \(-\phi_j\phi_k g_j(s)g_k(s)
                + i \phi_k g'_{kj}(s) \) \mathbb{B}_{jk,st} \bigg)
               +o(t-s),
\end{align}
which yields that
\begin{align} \label{deltaeW-st}
   \<(\del e^W)_{st}u(s), \vf\>
   =&   \<- \mu (e^{W(s)} u(s)), \vf\>  (t-s)
      + \sum\limits_{k=1}^N \<i \phi_k g_k(s) (e^{W(s)} u(s)), \vf\> \delta B_{k,st} \nonumber \\
    & + \sum\limits_{j,k=1}^N  \<\( -\phi_j\phi_k g_j(s) g_k(s) + i\phi_k g'_{kj}(s) \) (e^{W(s)} u(s)), \vf\> \mathbb{B}_{jk,st}
      + o(t-s).
\end{align}

{\it $(ii)$ Estimate of $\<e^{W(s)} \delta u_{st}, \vf\>$.}
Let $f(u):= |u|^{p-1} u$.
We claim that
\begin{align} \label{deltay-st}
  \<e^{W(s)}\delta u_{st}, \vf \>
  =& \< i \Delta (e^{W(s)} u(s)), \vf\> (t-s)
     + \<i f(e^{W(s)}u(s)), \vf\> (t-s)
     + o(t-s).
\end{align}

In order to prove \eqref{deltay-st},
using equation \eqref{equa-v-RNLS} we have
\begin{align} \label{K1-K2}
  \<e^{W(s)}\delta u_{st}, \vf \>
  =& \<e^{W(s)} \int_s^t i e^{-W(r)}\Delta (e^{W(r)} u(r)) dr, \vf\>
     + \<e^{W(s)} \int_s^t i f(u(r))dr, \vf\>  \nonumber \\
  =:& K_1 + K_2.
\end{align}

Note that,
\begin{align} \label{K1-K11-K12}
   K_1 =& \<i\Delta (e^{W(s)}u(s)), \vf\> (t-s)
          +\int_s^t \<u(r)-u(s) , (-i) \Delta  (e^{-W(s)}\vf)\> dr   \nonumber \\
        & + \int_s^t \< L(r)u(r) - L(s)u(s), (-i)e^{-W(s)}\vf\> dr  \nonumber \\
       =:& \<i\Delta (e^{W(s)}u(s)), \vf\> (t-s)
          + K_{11} + K_{12},
\end{align}
where $L(r)u(r) := (b(r)\cdot \na + c(r)) u(r)$,
and $L(s)u(s)$ is defined similarly.

By the integration by parts formula,
\begin{align} \label{K11-esti}
   K_{11}
   =& \int_s^t \<e^{-ir\Delta} u(r) - e^{-is\Delta} u(s) , (-i)e^{-is\Delta} \Delta (e^{-W(s)}\vf ) \> dr \nonumber \\
    &  + \int_s^t \<u(r),(-i) (1-e^{i(r-s)\Delta})  \Delta (e^{-W(s)}\vf ) \> dr \nonumber \\
   \leq& \int_s^t \|e^{-ir\Delta} u(r) - e^{-is\Delta} u(s)\|_{L^2} \| \Delta (e^{-W(s)}\vf) \|_{L^2} dr \nonumber \\
    &  + \int_s^t \|u(r)\|_{L^2} \|(1-e^{i(r-s)\Delta})  \Delta (e^{-W(s)}\vf)\|_{L^2} dr.
\end{align}

We claim that, there exists $\zeta>0$ such that,
\begin{align} \label{ut-us-t}
   \|e^{-ir\Delta} u(r) - e^{-is\Delta} u(s)\|_{L^2}
   \leq C (r-s)^{\zeta}.
\end{align}
To this end, by equation \eqref{equa-v-RNLS},
\begin{align} \label{erD-fu-bc}
   \|e^{-ir\Delta} u(r) - e^{-is\Delta} u(s)\|_{L^2}
   \leq \bigg\|\int_s^r e^{-is'\Delta} \( f(u(s')) + b(s')\cdot \na u(s') + c(s')u(s') \) d s' \bigg\|_{L^2}.
\end{align}
Applying Strichartz estimate with the Strichartz pair $(p+1, q)$,
$q=\frac{4(p+1)}{d(p-1)}$, we get
\begin{align}
   \bigg\|\int_s^r e^{-is'\Delta} f(u(s')) ds' \bigg\|_{L^2}
   \leq C \|f(u)\|_{L^{q'}(s,r;L^{\frac{p+1}{p}})}
   \leq C (r-s)^{1-\frac{d(p-1)}{4}} \|u\|_{L^q(s,r; L^{p+1})}^{p},
\end{align}
which, via Sobolev's embedding, yields that
\begin{align}  \label{esD-fu-esti}
    \bigg\|\int_s^r e^{-is'\Delta} f(u(s')) ds' \bigg\|_{L^2}
    \leq C (r-s)^{1-\frac{d(p-1)}{4} + \frac{p}{q}} \|u\|_{C([s,r]; H^1)}^{p}.
\end{align}
Moreover, we have
\begin{align} \label{esD-bc-esti}
    \bigg\|\int_s^r e^{-is'\Delta}  \(b(s')\cdot \na u(s') + c(s')u(s')\) ds' \bigg\|_{L^2}
   \leq C (r-s) \|u\|_{C([s,r]; H^1)}.
\end{align}
Hence, plugging \eqref{esD-fu-esti} and \eqref{esD-bc-esti} into \eqref{erD-fu-bc}
we obtain \eqref{ut-us-t}, as claimed.

Thus, using \eqref{ut-us-t} and the estimate that for any multi-index $\upsilon$,
\begin{align} \label{eirsD-t}
   \|(1-e^{i(r-s)\Delta})  \partial^\upsilon (e^{-W(s)}\vf)\|_{L^2}
   \leq C (r-s) \|e^{-W(s)}\vf\|_{H^{2+|\upsilon|}}
   \leq C(T,\upsilon) (r-s),
\end{align}
we derive from \eqref{K11-esti} that
\begin{align} \label{K11-st}
    K_{11} \leq&  C(T) \int_s^t (r-s)^\zeta + (r-s) dr = o(t-s).
\end{align}

Similarly,  we compute
\begin{align*}
  K_{12}
  =& \int_s^t \<e^{-ir\Delta} u(r) - e^{-is\Delta} u(s), (-i) e^{-is\Delta} L^*(s) (e^{-W(s)}\vf) \> dr  \nonumber \\
   & + \int_s^t \<u(r), (-i) (1-e^{i(r-s)\Delta})  L^*(s)  (e^{-W(s)}\vf) \>dr \nonumber \\
   & + \int_s^t \<(L(r)-L(s))u(r), (-i) e^{-W(s)} \vf\> dr,
\end{align*}
where $L^*(s)$ is the adjoint operator of $L(s)$.
Since
$$\|(L(r)-L(s))u(r)\|_{L^2} \leq C(T) \max_{1\leq l\leq N}|B_l(r)-B_l(s)| \leq C(T)(r-s)^{\a},$$
using \eqref{ut-us-t} and \eqref{eirsD-t}
we get
\begin{align} \label{K12-st}
   K_{12}
   \leq C(T) \int_s^t   (r-s)^{\zeta} + (r-s) + (r-s)^{\a} dr
   =  o(t-s).
\end{align}
Thus, plugging \eqref{K11-st} and \eqref{K12-st} into \eqref{K1-K11-K12}
we conclude that
\begin{align} \label{K1-st}
   K_1  =  \<i\Delta (e^{W(s)}u(s)), \vf\> (t-s)
          + o(t-s).
\end{align}

Regarding the second term $K_2$ in \eqref{K1-K2},
we see that
\begin{align} \label{K2.0}
   K_2 = \<i f(e^{W(s)}u(s)), \vf\> (t-s)
         + \int_s^t \<f(u(r))- f(u(s)), (-i) e^{-W(s)} \vf\> dr.
\end{align}
Since
\begin{align}
  |f(u(r))- f(u(s))| \leq C (|u(r)|^{p-1} + |u(s)|^{p-1}) |u(r) - u(s)|,
\end{align}
Sobolev's embedding $H^1\hookrightarrow L^{p+1}$ yields that
\begin{align*}
   \big | \<f(u(r))- f(u(s)), (-i) e^{-W(s)} \vf \>  \big|
   \leq& \| e^{-W} \vf\|_{C([s,t];L^{p+1})}
       \|f(u(r))- f(u(s))\|_{L^{\frac{p+1}{p}}} \nonumber \\
   \leq&  \| e^{-W} \vf\|_{C([s,t];L^{p+1})}
         \( \|u(r)\|_{L^{p+1}} + \|u(s)\|_{L^{p+1}} \)
         \|u(r) - u(s)\|_{L^{p+1}} \nonumber \\
   \leq&  C \| e^{-W} \vf\|_{C([s,t];L^{p+1})}  \|u\|_{C([s,t];H^1)}
          \|u(r) - u(s)\|_{H^1},
\end{align*}
which yields that
\begin{align}
       & \bigg|\int_s^t \<f(u(r))- f(u(s)), (-i) e^{-W(s)} \vf\> dr \bigg|  \nonumber \\
   \leq& C \|e^{-W} \vf\|_{C([T_0,T];L^{p+1})}
         \|u\|^{p-1}_{C([T_0,T]; H^1)} \sup\limits_{s\leq r\leq t} \|u(r)-u(s)\|_{H^1} (t-s)
   =  o(t-s),
\end{align}
where in the last step we  used the fact that
$\sup_{s\leq r\leq t} \|u(r)-u(s)\|_{H^1} =o(1)$
as $t \to s$, due to the continuity of $u$ in $H^1$.
Thus,
we obtain
\begin{align} \label{K2-st}
   K_2 = \<i f(e^{W(s)}u(s)), \vf\> (t-s)
          + o(t-s).
\end{align}

Therefore,
plugging \eqref{K1-st} and \eqref{K2-st} into \eqref{K1-K2}
we obtain \eqref{deltay-st}, as claimed.

{\it $(iii)$ Estimate of $\<(\delta e^W)_{st} \delta u_{st},\vf\>$.}
By the integration by parts formula and H\"older's inequality,
\begin{align}
     \<(\delta e^W)_{st} \delta u_{st}, \vf\>
   =& \int_s^t \<u(r), (-i)e^{-W(r)}\Delta\(e^{W(r)} \ol{(\delta e^W)_{st}} \vf\)\>
      +  \<f(u(r)), (-i) \ol{(\delta e^W)_{st}} \vf\> dr  \nonumber \\
   \leq& \int_s^t \|u(r)\|_{L^2} \|\Delta\(e^{W(r)} \ol{(\delta e^W)_{st}} \vf\) \|_{L^2}
     + \|u(r)\|^{p}_{L^{\rho p}} \|(\delta e^W)_{st}\|_{L^\9} \|\vf \|_{L^{\rho'}} dr,
\end{align}
where $\rho \in (1,\9)$ is taken such that $2\leq \rho p \leq 2+\frac{4}{d-2}$ if $d\geq 3$,
$2\leq \rho p <\9$ if $d=1,2$,
$\frac{1}{\rho}+\frac{1}{\rho'}=1$.
Since for any multi-index $\upsilon$,
\begin{align*}
   \|\partial_x^\upsilon (\delta e^W)_{st} \|_{L^\9} \leq C(T,\a) (t-s)^{\a},
\end{align*}
using Sobolev's embedding $H^1 \hookrightarrow L^{\rho p}$,
we obtain
\begin{align}  \label{deltaeW-deltay-st}
       \<(\delta e^W)_{st} \delta u_{st}, \vf\>
  \leq& C \int_s^t \|u(r)\|_{L^2} + \|u(r)\|_{H^1}^{p} dr (t-s)^\a \nonumber \\
  \leq& C (1+ \|u\|_{C([s,T]; H^1)}^{p} ) (t-s)^\a
  = o(t-s).
\end{align}

Now, plugging \eqref{deltaeW-st}, \eqref{deltay-st} and \eqref{deltaeW-deltay-st} into \eqref{Delta-X-st}
and using $X=e^W u$
we obtain
\begin{align}  \label{deltaX-vf-Ca}
   \<\delta X_{st}, \vf\>
   =& \<i \Delta X(s) + i f(X(s)) - \mu X(s), \vf\>(t-s)
      + \sum\limits_{k=1}^N \<i\phi_k g_k(s) X(s), \vf\> \delta B_{k,st}  \nonumber\\
    &  + \sum\limits_{j,k=1}^N  \<-\phi_j\phi_k g_j(s) g_k(s) X(s) + i\phi_k g'_{kj}(s) X(s), \vf\> \mathbb{B}_{jk,st}
      + o(t-s).
\end{align}
In particular,
this yields that for any $\vf \in C_c^\9$,
\begin{align} \label{X-vf-Ca}
   \<X, \vf\> \in C^\a([T_0,T], \bbr).
\end{align}

Let $Y:=(Y_k)$ with $Y_k := \<i \phi_k g_k X, \vf\> $.
We claim that
\begin{align} \label{Yk-vf-Ca}
   Y  \in \mathcal{D}^{2\a}_{B}([T_0,T]; \bbr^N),
\end{align}
with the Gubinelli derivative
\begin{align} \label{Y'kj-vf-Gubi}
    Y'_{kj} =  \<-\phi_j \phi_k g_j g_k X + i\phi_k g'_{kj} X, \vf\>  \in C^\a ([T_0,T]; \bbr).
\end{align}

To this end,
using \eqref{X-vf-Ca} and the fact that $g_k\in  C^\a ([T_0,T]; \bbr)$,
we have
$$Y_k = g_k \<X, -i \phi_k \vf\>  \in C^\a ([T_0,T]; \bbr).$$
Moreover, note that
\begin{align} \label{deltaYk-expan}
  \delta Y_{k,st}
  = \delta g_{k,st} \<X(s), -i\phi_k\vf\>
    + g_k(s) \< \delta X_{st}, -i\phi_k\vf\>.
\end{align}
Since $(g_k) \in \mathcal{D}^{2\a}_{B}([T_0,T]; \bbr^N)$, we have
\begin{align} \label{gkt-gks}
   \delta g_{k,st} = \sum\limits_{j=1}^N g'_{kj}(s) \delta B_{j,st} +  \calo((t-s)^{2\a}).
\end{align}
It also follows from \eqref{deltaX-vf-Ca} that
\begin{align}  \label{deltaX-phikvf}
   \<\delta X_{st}, -i \phi_k \vf\>
   = - \sum\limits_{j=1}^N \<\phi_j g_j(s) X(s), \phi_k \vf\> \delta B_{j,st}
     + \calo((t-s)^{2\a}).
\end{align}
Plugging \eqref{gkt-gks} and \eqref{deltaX-phikvf} into \eqref{deltaYk-expan}
we obtain
\begin{align} \label{deltaYst}
   \delta Y_{k,st}
   = \sum\limits_{j=1}^N \<-\phi_j\phi_k g_j(s) g_k(s) X(s) + i \phi_k g'_{kj}(s) X(s), \vf\> \delta B_{j,st}
      + \calo((t-s)^{2\a}),
\end{align}
which yields \eqref{Yk-vf-Ca} and \eqref{Y'kj-vf-Gubi}, as claimed.

Thus,
we conclude from \eqref{X-vf-Ca}, \eqref{Yk-vf-Ca} and \eqref{Y'kj-vf-Gubi}
that
$\<X, \vf\> \in \mathcal{D}_{B}^{2\a}([T_0,T]; \bbr)$
with the Gubinelli derivative
$\<i\phi_k g_k X,\vf\>$,
$\a \in (\frac 13, \frac 12)$,
$X:=e^W u$ satisfies equation \eqref{equa-X-rough} in the sense of Definition \ref{X-roughpath-def}
and \eqref{phikX-st} follows from \eqref{deltaYst}.
Therefore, the proof is complete.
\hfill $\square$

\section{Geometrical decomposition} \label{Sec-Geom-dec}

This section mainly treats the geometrical decomposition of solutions to
equation
\begin{equation}  \label{equa-un-tn}
\left\{ \begin{aligned}
 &i\partial_tu+\Delta u+|u|^{p-1}u+b_* \cdot \nabla u+c_* u=0,   \\
 &u(T)=R(T),
\end{aligned}  \right.
\end{equation}
where $R=\sum_{k=1}^{K}R_k$,
$\{R_k\}$ are given by \eqref{Rk-def} and $T>0$ is sufficiently large.
We mainly focus on the critical case, i.e., $p=1+\frac 4d$,
as the subcritical case is easier and can be proved similarly.

For convenience,
we set
$\calp_k:= (\al_k,\t_k,w_k) \in  \mathbb{X}:= \bbr^{d} \times \bbr \times \bbr$,
$1\leq k\leq K$, and
$\calp :=(\calp_1, \cdots, \calp_K)
\in \mathbb{X}^K$.
For simplicity of exposition, we will omit the dependence on $\omega (\in \Omega)$.

\subsection{Critical case}  \label{Subsec-Geom-cri}

\begin{proposition} (Geometrical decomposition)  \label{Prop-dec-un-cri}
Assume that $u$ solves \eqref{equa-un-tn} with $p=1+\frac4d$, $d\geq 1$.
For any $T$  sufficiently large,
there exist  $0\leq T^*<T$ and unique modulation parameters
$\mathcal{P}\in C^1([T^*, T]; \mathbb{X}^K)$,
such that $u$ admits the geometrical decomposition
\be\label{geo-dec-cri}
    u(t,x)
    =\sum_{k=1}^{K} \wt R_{k}(t,x) + \ve(t,x)\ \(=: \wt R(t,x) + \ve(t,x)\),
\ee
with the modulation parameters
$\calp_k:=(\a_k, \theta_k, w_k) \in \mathbb{X}$
and
\be \label{Rk-Qk-cri}
    \wt R_{k}(t,x) := Q_{w_k(t)} \(x-v_k t-\alpha_{k}(t) \)e^{i\(\half v_k\cdot x-\frac{1}{4}|v_k|^2t+(w_{k}^0)^{-2}t+\t_{k}(t)\)},
\ee
satisfying
\begin{align}
     \ve(T) =0,\ \ \calp_k(T)=(x_k^0, \t^0_k,w_k^0).
\end{align}
Moreover, the following orthogonality conditions hold on $[T^*,T] $:
for every $1\leq k\leq K$,
\be\ba\label{ortho-cond-R-ve-cri}
 & {\rm Re}\int \nabla  \wt R_{k}(t) \ol{\ve}(t)dx=0,\ \
   {\rm Im} \int  \wt R_{k}(t) \ol{\ve}(t)dx=0,\\
 & {\rm Re}\int \(\Lambda_k  \wt R_k(t)-\frac{i}{2}v_k\cdot y_k(t)  \wt R_k(t)\)\bar{\ve}(t)dx=0,
\ea\ee
where
\begin{align} \label{Lambdak-def}
\Lambda_k:= \frac{2}{p-1} I_d + y_k \cdot \na,\ \ with\  y_k(t)=x-v_kt-\alpha_k(t).
\end{align}
\end{proposition}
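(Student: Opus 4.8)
The plan is to produce \eqref{geo-dec-cri}--\eqref{ortho-cond-R-ve-cri} via the implicit function theorem, applied to a finite-dimensional map whose zero set encodes the orthogonality conditions. For $t$ near $T$, $v\in H^1$ and $\calp=(\calp_k)_{k=1}^K=((\al_k,\t_k,w_k))_{k=1}^K\in\bbx^K$, let $\wt R_k(t,\cdot;\calp_k)$ be the profile \eqref{Rk-Qk-cri} with phase $\psi_k=\tfrac12 v_k\cdot x-\tfrac14|v_k|^2t+(w_k^0)^{-2}t+\t_k$, set $\wt R(t,\cdot;\calp)=\sum_{k=1}^K\wt R_k(t,\cdot;\calp_k)$, $\ve:=v-\wt R(t,\cdot;\calp)$, and define $\Phi\colon(\text{nbhd of }T)\times H^1\times\bbx^K\to\bbr^{K(d+2)}$ by collecting, for each $k$, the numbers $\Rt\int\na\wt R_k\,\ol\ve\,dx\in\bbr^d$, $\ \I\int\wt R_k\,\ol\ve\,dx$ and $\Rt\int(\La_k\wt R_k-\tfrac i2 v_k\cdot y_k\wt R_k)\ol\ve\,dx$. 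The map $\Phi$ is smooth in $(t,\calp)$ and affine in $v$. Since $\wt R_k(t,\cdot;(x_k^0,\t_k^0,w_k^0))=R_k(t)$, the base parameter $\calp^0:=((x_k^0,\t_k^0,w_k^0))_{k=1}^K$ is $t$-independent and $\Phi(t,\sum_k R_k(t),\calp^0)=0$ for all $t$, with $\ve=0$ there.

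The heart of the proof is the invertibility of $D_\calp\Phi$ at $(T,\sum_k R_k(T),\calp^0)$ for $T$ large. Since $\ve=0$ at this point, only the terms in which $\partial_\calp$ differentiates a profile survive. The $(k,k')$-block with $k\ne k'$ pairs $\partial_{\calp_{k'}}R_{k'}$ against orthogonality directions built from $R_k$; by the exponential decay \eqref{Q-decay} of $Q$ and the decoupling lemma for solitary waves with distinct velocities $v_j\ne v_k$ (proved in the Appendix), such blocks are $\calo(e^{-\delta T})$. For the diagonal blocks one uses $\partial_{\al_k^{(j)}}R_k=-(\partial_jQ_{w_k^0})(y_k)e^{i\psi_k}$, $\partial_{\t_k}R_k=iR_k$, $\partial_{w_k}R_k=-\tfrac1{w_k^0}(\La Q_{w_k^0})(y_k)e^{i\psi_k}$, together with the identity $\La_k R_k-\tfrac i2 v_k\cdot y_k R_k=(\La Q_{w_k^0})(y_k)e^{i\psi_k}$; combining these with the radiality of $Q$, integration by parts, and the scaling identity $\int\La Q_{w}\cdot Q_{w}\,dx=(\tfrac2{p-1}-\tfrac d2)\int Q_w^2\,dx=0$ (valid in the critical case $p-1=\tfrac4d$), the $k$-th diagonal block becomes diagonal with entries proportional to $\int|\na Q_{w_k^0}|^2\,dx$, $\int Q_{w_k^0}^2\,dx$ and $\int(\La Q_{w_k^0})^2\,dx$, all strictly positive. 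Note that it is precisely the correction $-\tfrac i2 v_k\cdot y_k\wt R_k$ in the third direction that keeps the $w_k$-entry non-zero in the $L^2$-critical regime, where the naive pairing $\Rt\int\partial_{w_k}R_k\,\ol{R_k}\,dx$ vanishes because $\|Q_w\|_{L^2}$ does not depend on $w$. Hence $D_\calp\Phi$ is a small perturbation of an invertible block-diagonal matrix, so it is invertible for $T$, and for $t$ in a uniform neighborhood of $T$, large.

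By the implicit function theorem there are $\eta>0$ and a $C^1$ map $(t,v)\mapsto\calp(t,v)$ on $\{|t-T|<\eta,\ \|v-\sum_kR_k(t)\|_{H^1}<\eta\}$ with $\calp(T,\sum_kR_k(T))=\calp^0$, $\Phi(t,v,\calp(t,v))=0$, and $\calp(t,v)$ unique near $\calp^0$. Since $u$ is an $H^1$-solution of \eqref{equa-un-tn}, hence continuous in $H^1$, with $u(T)=R(T)=\sum_kR_k(T)$, continuity gives $0\le T^*<T$ such that $\|u(t)-\sum_kR_k(t)\|_{H^1}<\eta$ on $[T^*,T]$; then $\calp(t):=\calp(t,u(t))$ and $\ve(t):=u(t)-\wt R(t,\cdot;\calp(t))$ give the decomposition \eqref{geo-dec-cri} with the orthogonality conditions \eqref{ortho-cond-R-ve-cri} on $[T^*,T]$, together with $\ve(T)=0$ and $\calp_k(T)=(x_k^0,\t_k^0,w_k^0)$. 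To upgrade $\calp\in C^0$ to $\calp\in C^1([T^*,T];\bbx^K)$, differentiate the three orthogonality relations in $t$, substitute $\partial_tu=i\Delta u+i|u|^{p-1}u+b_*\cdot\na u+c_*u$ from \eqref{equa-un-tn} and the definition of $\ve$, and invert the same Jacobian (up to small $t$-dependent corrections): this expresses $\dot\calp(t)$ as a continuous function of $(\calp(t),\ve(t),t)$, which gives the $C^1$-regularity — and, as a by-product, the modulation equations studied in the next section. The subcritical case is analogous, with $w_k\equiv w_k^0$ frozen and only $d+1$ orthogonality conditions per $k$.

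The main difficulty is the invertibility of $D_\calp\Phi$: one must control the inter-soliton contributions, which requires the distinct-velocity decoupling and the decay of $Q$, and at the same time verify the non-degeneracy of each single-soliton block in the critical regime, where the scaling direction has to be corrected by $-\tfrac i2 v_k\cdot y_k\wt R_k$ in order to compensate for the vanishing of $\tfrac{d}{dw}\|Q_w\|_{L^2}^2$.
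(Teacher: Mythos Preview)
Your approach is the same as the paper's: apply the implicit function theorem to the map encoding the orthogonality conditions, and show the Jacobian is invertible because the cross-blocks $k\neq k'$ are $\calo(e^{-\delta T})$ by decoupling while each $k$-block is non-degenerate. Two minor points. First, the $k$-block is not diagonal but block upper-triangular: the entry $\partial_{\t_k}\bigl(\Rt\int\partial_j\wt R_k\,\ol\ve\,dx\bigr)=-\tfrac{v_{k,j}}{2}\|Q_{w_k}\|_{L^2}^2$ is nonzero (the paper records it explicitly); the determinant is still the product of the listed diagonal entries, so nothing changes. Second, the paper sets up the implicit function theorem in $H^{-1}$ rather than $H^1$, writing the orthogonality conditions as $H^1$--$H^{-1}$ pairings; since the equation yields $u\in C^1([T^*,T];H^{-1})$, the $C^1$-regularity of $t\mapsto\calp(t)$ follows directly by composition with the smooth implicit map $u\mapsto\calp(u)$, without the separate step of differentiating the orthogonality relations. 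Your route to $C^1$ is also legitimate but must interpret $\partial_tu$ in $H^{-1}$ anyway, so the paper's framing is slightly cleaner. Finally, your explanation of the correction $-\tfrac i2 v_k\cdot y_k\wt R_k$ is off: even without it the $(w_k,f_3^k)$-entry would still be $w_k^{-1}\|\La Q_{w_k}\|_{L^2}^2>0$; the correction is there to strip the Galilean phase so that the third direction is exactly $(\La Q_{w_k})(y_k)e^{i\psi_k}$, the direction needed for the coercivity Lemma in the critical case.
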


\begin{remark}
$(i)$. The orthogonality conditions in \eqref{ortho-cond-R-ve-cri}
correspond to the coercivity of linearized operators around the ground state
in Lemma \ref{Lem-coerc-L}.
The only one remaining unstable direction ${\rm Re}\<\wt R_k,\ve\>$
will be controlled by the almost conservation of the local mass
in Corollary \ref{Cor-Rkve-cri} below.

$(ii)$.
We note that, the frequency $w_k^0$ in the phase of $\wt R_k$ is fixed,
but the frequency parameter $w_k(t)$ in $Q_{w_k(t)}$ varies with time.
In Proposition \ref{Prop-dec-un-sub} below
we are also able to fix the frequency $w_k^0$ in $Q_{w_k^0}$ in the subcritical case.
This is possible because
the linearized operators in the subcritical case have one less unstable direction
than those in the critical case.
\end{remark}

The proof of Proposition \ref{Prop-dec-un-cri} is based on
the implicit function theorem.
See, e.g., \cite{MMT06}.
For the reader's convenience,
we present the proof in the Appendix in the fashion close to that of \cite{CSZ21}.

In the sequel,
we set
$B_{*,l}(t):=  \int_t^\9   g_l(s) d B_l(s)$,
$B_*(t)= \sup_{t\leq s<\9} \sum_{l=1}^N |B_{*,l}(s)|$.
Since $g_l\in L^2(\bbr^+)$, we have
\begin{align} \label{B*-o1}
    \lim\limits_{t\to \9} B_*(t) = 0, \ \ \bbp-a.s..
\end{align}
In particular, for $\bbp$-a.e. $\omega\in \Omega$,
we may take a large (random) time $T_*=T_*(\omega)>0$ such that
\begin{align}  \label{B*-T*-1}
   \sup\limits_{t\geq T_*} B_*(t) \leq 1.
\end{align}
We also consider $T^*\geq T_*(\omega)$ sufficiently large such that for any $t\in [T^*,T]$,
\begin{align}  \label{ve-T*-1}
  \sup_{T^*\leq t\leq T}\|\ve(t)\|_{H^1}<1,
\end{align}
and
\begin{align} \label{B*-wk-ak-deter-bdd}
   B_*(t) + |w_k(t) - w^0_k| + |\a_k(t) - x_k^0|
   \leq \frac {1}{10} \min\{1, w^0_k,  x_k^0 \},
\end{align}
where $1\leq k\leq K$.
Hence, $B_*$, $w_k$, $w_k^{-1}$ and $|\a_k|$ are bounded by a deterministic constant
on $[T^*,T]$.

Next, the dynamic of geometric parameters are controlled by the  modulation equation below.

\begin{proposition} (Control of modulation equations) \label{Prop-Mod-cri}
Define the modulation equations by
\begin{align}
   Mod_k(t):= |\dot{w}_k(t)|+|\dot{\alpha}_{k}(t)|+|\dot{\t}_{k}(t)-(w^{-2}_k(t)-(w_{k}^0)^{-2})|,
\end{align}
where $1\leq k\leq K$,
and set $Mod := \sum_{k=1}^K Mod_k$.
Then, there exist deterministic constants $C,\delta_1, \delta_2>0$
such that
for $T$ large enough and $T^*$ close to $T$
\begin{align} \label{Mod-esti-cri}
Mod (t) \leq C(  \|\ve(t)\|_{H^1} +  B_*(t)\phi(\delta_1 t)  + e^{-\delta_2 t}),\ \ \forall t\in [T^*,T].
\end{align}
\end{proposition}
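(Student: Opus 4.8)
\textbf{Proof proposal for Proposition \ref{Prop-Mod-cri}.}
The plan is to differentiate the orthogonality conditions \eqref{ortho-cond-R-ve-cri} in time and turn them into an algebraic system for the vector of parameter derivatives $\dot{\calp}=(\dot w_k,\dot\a_k,\dot\t_k)_{k=1}^K$. First I would substitute the geometrical decomposition \eqref{geo-dec-cri} into equation \eqref{equa-un-tn} to obtain the equation satisfied by the remainder $\ve$, namely $i\partial_t\ve = -\Delta\ve - (\,|u|^{p-1}u - |\wt R|^{p-1}\wt R\,) - b_*\cdot\na u - c_* u + \big(\text{terms involving }\dot\calp\big)$, where the $\dot\calp$-terms come from $\partial_t\wt R_k$ and are linear in $\dot w_k,\dot\a_k,\dot\t_k$ with coefficients given by the explicit profiles $\Lambda_k\wt R_k$, $\na\wt R_k$, $i\wt R_k$ and the like. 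Then, taking the time derivative of each orthogonality relation ${\rm Re}\int\na\wt R_k\,\ol\ve=0$ etc., and using that $\partial_t\wt R_k$ is itself a linear combination of these profiles plus transport terms, I get $K(d+2)$ scalar equations of the schematic form
\begin{align*}
  A(\calp)\,\dot{\calp} = \vec{b}(t),
\end{align*}
where $A(\calp)$ is a matrix that, at the soliton configuration ($\ve=0$, $\calp=\calp^0$), is block-diagonal and invertible — this is exactly the non-degeneracy underlying the implicit function theorem used for Proposition \ref{Prop-dec-un-cri}, the relevant inner products being $\int|\na Q_{w_k}|^2$, $\int Q_{w_k}\Lambda_k Q_{w_k}$, etc., none of which vanish. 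By continuity, $A(\calp)$ stays uniformly invertible on $[T^*,T]$ once \eqref{ve-T*-1} and \eqref{B*-wk-ak-deter-bdd} hold, so $\dot\calp = A(\calp)^{-1}\vec b(t)$ and it remains to bound $|\vec b(t)|$.

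The right-hand side $\vec b(t)$ collects three types of contributions. The first comes from the nonlinear interaction between the remainder and the profiles: after using the equation for $\ve$ and integrating by parts, these are of the form $\int(\text{profile})\cdot(\text{Schr\"odinger + nonlinear terms in }\ve)$, and one sees — using the orthogonality conditions themselves to kill the linear-in-$\ve$ leading terms, plus the exponential spatial decay \eqref{Q-decay} of $Q$ and its derivatives — that they are $\calO(\|\ve(t)\|_{H^1})$ (the quadratic-in-$\ve$ pieces are even better, $\calO(\|\ve\|_{H^1}^2)$, and absorbed). The second type comes from the soliton–soliton interaction: since the velocities $v_j$ are pairwise distinct, the overlap $\int|\wt R_j||\wt R_k|$ for $j\neq k$ is exponentially small in $t$, of order $e^{-\delta_2 t}$, which is the source of the $e^{-\delta_2 t}$ term — this uses the decoupling lemma referenced for the Appendix together with $|x_j-x_k+(v_j-v_k)t|\gtrsim t$. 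The third type comes from the random perturbation terms $b_*\cdot\na u + c_* u$: pairing these against the profiles and using the definitions \eqref{b*-def}, \eqref{c*-def} of $b_*,c_*$, their size is controlled by $B_*(t)$ times the spatial weight $\phi$ evaluated at the soliton centers $v_k t + \a_k \sim t$ (because $\phi_l$ decays like $\phi(\delta_1\cdot)$ by \eqref{phil-exp-decay}/\eqref{phil-poly-decay} and the profiles localize near $v_k t$), giving the $B_*(t)\phi(\delta_1 t)$ term. Combining the three via the uniform bound on $A^{-1}$ yields \eqref{Mod-esti-cri}.

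The main obstacle, I expect, is the second type of estimate only in the sense of bookkeeping, but the genuinely delicate point is handling the random coefficients $b_*,c_*$: unlike the deterministic modulation argument, here $\na W_*$ does \emph{not} decay in $x$, it is merely \emph{bounded} (with spatial derivatives small at infinity by $(A0)$), so one cannot simply say $b_*\cdot\na u$ is small. The resolution is that when paired against the localized profiles $\na\wt R_k$, $\Lambda_k\wt R_k$, $i\wt R_k$ — all concentrated near $x\approx v_k t$ — the relevant quantity is $\na\phi_l$ (and $\Delta\phi_l$, $|\na\phi_l|^2$) evaluated near $|x|\sim t$, which is exactly where \eqref{phil-exp-decay} or \eqref{phil-poly-decay} gives decay $\phi(\delta_1 t)$, while the stochastic integrals $\int_t^\infty g_l\,dB_l$ contribute the factor $B_*(t)$; crucially all of this is a \emph{pathwise} (fixed-$\omega$) estimate, so no stochastic calculus is needed at this stage, only the deterministic bounds on Brownian paths encoded in $B_*$. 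A secondary technical care is ensuring $\ve$ is regular enough in time to justify differentiating the orthogonality conditions — this follows from the $H^1$ local theory for \eqref{equa-un-tn} and the $C^1$ regularity of $\calp$ asserted in Proposition \ref{Prop-dec-un-cri}, so it is legitimate.
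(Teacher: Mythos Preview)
Your proposal is correct and takes essentially the same approach as the paper: the paper derives the equation for $\ve$, tests it against the profiles $\wt R_k$, $\na\wt R_k-\frac{i}{2}v_k\wt R_k$, and $\Lambda_k\wt R_k-\frac{i}{2}v_k\cdot y_k\wt R_k$ (which is equivalent to differentiating the orthogonality conditions), and bounds the three source-term types exactly as you describe, including the localization argument for the $b_*,c_*$ contributions. The only cosmetic difference is that the paper works one parameter at a time rather than as a matrix system, and makes explicit the absorption step $Mod(t)\leq C(\|\ve(t)\|_{L^2}+e^{-\delta t})Mod(t)+\ldots$ that is implicit in your claim that $A(\calp)$ remains uniformly invertible.
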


{\bf Proof.}
For simplicity we set the phase function
\begin{align} \label{phase-fct}
\Phi_k(t,x):= \half v_k\cdot x-\frac{1}{4}|v_k|^2t+(w_{k}^0)^{-2}t+\t_{k}(t),
\end{align}
where $1\leq k\leq K$.
Using the explicit formula \eqref{geo-dec-cri} we compute
\begin{align} \label{dt-R-cri}
i\partial_t \wt R_k(t,x)=&\(\frac{|v_k|^2}{4} - (w_k^0)^{-2} -\dot{\t}_k(t)\) \wt R_k(t,x)
      -i(\dot{\alpha}_k(t)+v_k)\cdot\nabla Q_{w_k(t)} (x-v_kt-\a_k)
  e^{i\Phi_k(t,x)} \nonumber\\
  &-i\frac{\dot{w}_k(t)}{w_k(t)} \Lambda_k Q_{w_k(t)} (x-v_kt-\a_k)
  e^{i \Phi_k(t,x)},
\end{align}
and
\begin{align}
& \nabla  \wt R_k(t,x) - \frac{i}{2}v_k \wt R_k(t,x) =\nabla Q_{w_k(t)} (x-v_kt-\a_k) e^{i\Phi_k(t,x)}, \label{nablaR}\\
& \Delta  \wt R_k(t,x) = \(\Delta Q_{w_k(t)}+iv_k\cdot \nabla Q_{w_k(t)} -\frac{|v_k|^2}{4}Q_{w_k(t)}\)(x-v_kt-\a_k)
               e^{i\Phi_k(t,x)}.\label{lambdaR}
\end{align}
Then, it follows from \eqref{equa-Qw}, \eqref{dt-R-cri} and \eqref{lambdaR} that
\begin{align} \label{equa-Rk-cri}
  & i\partial_t \wt R_k(t,x)
    +\Delta  \wt R_k(t,x)
    +| \wt R_k(t,x)|^{p-1}  \wt R_k(t,x) \nonumber \\
= & \(-i\frac{\dot{w_k}(t)}{w_k(t)} \Lambda_k Q_{w_k(t)}
   -i\dot{\alpha}_k(t) \nabla Q_{w_k(t)}
  - \(\dot{\t}_k(t) - (w_k^{-2}(t)- (w_{k}^0)^{-2})\) Q_{w_k(t)}\)(x-v_kt-\a_k)
    e^{i \Phi_k(t,x)}.
\end{align}

Moreover, set
\begin{align}
  & H_1:= -\sum\limits_{j\not = k} \( i\frac{\dot{w_j}}{w_j} \Lambda_j Q_{w_j}
       + i\dot{\alpha}_j \nabla Q_{w_j} +  \(\dot{\t}_j - (w_{j}^{-2}- (w_{j}^0)^{-2})\) Q_{w_j}\)(x-v_kt-\a_k) e^{i\Phi_j}, \label{H1-def} \\
  & H_2:=| \wt R|^{p-1}  \wt R-\sum_{k=1}^{K}| \wt R_k|^{p-1}  \wt R_k,  \label{H2-def} \\
  & H_3:=| \wt R+\ve|^{p-1}( \wt R+\ve)-| \wt R|^{p-1} \wt  R.   \label{H3-def}
\end{align}

It then follows from equation \eqref{equa-un-tn}, \eqref{geo-dec-cri}
and \eqref{dt-R-cri} that
\begin{align} \label{equa-ve-cri}
     & i\partial_t\ve(t,x)+\Delta\ve(t,x) \nonumber \\
       & -   \( i\frac{\dot{w_k}(t)}{w_k(t)} \Lambda_k Q_{w_k(t)}
    + i\dot{\alpha}_k(t) \nabla Q_{w_k(t)} +  \(\dot{\t}_k(t) - (w_{k}^{-2}(t)- (w_{k}^0)^{-2})\) Q_{w_k(t)}\)
        (x-v_kt-\a_k) e^{i \Phi_k(t,x)}     \nonumber \\
    =& -H_1(t,x)-H_2 (t,x)- H_3(t,x)
      - b_*(t,x)\cdot(\nabla  \wt R(t,x)+\nabla \ve(t,x)) - c_*( \wt R(t,x)+\ve(t,x)).
\end{align}

We are now in position to derive the estimates of modulation equations.

$(i)$  {Estimate of $\dot{\a}_k$}.
Taking the inner product of \eqref{equa-ve-cri} with
$\nabla  \wt R_k-\frac{i}{2}v_k \wt R_k$,
then taking the imaginary part and using \eqref{nablaR}
we get
\begin{align} \label{ptve-nablaRk-inner}
&{\rm Re}\langle\partial_t\ve, \nabla  \wt R_k-\frac{i}{2}v_k \wt R_k\rangle
   +{\rm Im}\langle\Delta \ve,\nabla  \wt R_k-\frac{i}{2}v_k \wt R_k\rangle
    -   \frac{\dot{w_k}}{w_k}
   {\rm Re}\langle \Lambda  Q_{w_k} ,\nabla Q_{w_k}  \rangle
    - \dot{\alpha}_k \|\nabla Q_{\omega_k}\|_{L^2}^2 \nonumber \\
  &- \(\dot{\t}_k - (w_k^{-2} - (w_{k}^0)^{-2})\)
   {\rm Im} \langle  Q_{w_k}   ,\nabla Q_{w_k} \rangle  \nonumber \\
=&  -{\rm Im}\langle H_1 +  H_2+ H_3,\nabla  \wt R_k-\frac{i}{2}v_k \wt R_k\rangle
  - {\rm Im}\langle b_* \cdot (\nabla  \wt R + \na \ve)+c_* ( \wt R+\ve), \nabla  \wt R_k-\frac{i}{2}v_k \wt R_k\rangle.
\end{align}

For the L.H.S. of \eqref{ptve-nablaRk-inner},
by \eqref{ortho-cond-R-ve-cri}, \eqref{nablaR} and \eqref{equa-Rk-cri},
\begin{align} \label{ptve-nablaRj-sub}
{\rm Re}\langle\partial_t\ve, \nabla  \wt R_k-\frac{i}{2}v_k \wt R_k\rangle
=  {\rm Re}\langle\ve, \partial_t
    \(\nabla Q_{w_k} (x-v_kt-\a_k) e^{i\Phi_k}\) \rangle
=  \calo( Mod_k +1)
\|\ve\|_{L^2},
\end{align}
and
\begin{align} \label{Dve-naRk}
   {\rm Im} \<\Delta \ve, \na  \wt R_k - \frac i2 v_k  \wt R_k \>
   = {\rm Im} \< \ve, \Delta(\na Q_{w_k}(x-v_kt-\a_k) e^{i\Phi_k})\>
   = \calo(\|\ve\|_{L^2}).
\end{align}

Moreover, by  the radial symmetry of $Q_{w_k}$,
\begin{align} \label{LambdaQ-naQ-0}
   \< \Lambda Q_{w_k}, \nabla Q_{w_k} \> =0,\ \
   \<Q_{w_k}, \na Q_{w_k}\> =0.
\end{align}

Regarding the R.H.S. of \eqref{ptve-nablaRk-inner},
we claim that there exist deterministic constants $C, \delta_1, \delta_2 >0$
such that
\begin{align} \label{ptve-naRk-inner-RHS}
   | {\rm R.H.S.\ of}\ \eqref{ptve-nablaRk-inner}|
   \leq C(\|\ve(t)\|_{H^1} + B_*(t)\phi(\delta_1 t) + (Mod(t)+1)e^{-\delta_2 t}).
\end{align}

In order to prove \eqref{ptve-naRk-inner-RHS},
we use Lemma \ref{Lem-decoup} to derive that
\begin{align}  \label{H1-Rj-cri}
  |\langle H_1(t), \na  \wt R_k(t) - \frac{i}{2} v_k  \wt R_k(t) \rangle|
   + |\langle H_2(t), \na  \wt R_k(t) - \frac{i}{2} v_k \wt  R_k(t) \rangle|
  \leq C (Mod(t)+1) e^{-\delta t}.
\end{align}

Moreover, since
$p\leq 1+\frac 4d$, we may take $\rho\geq 1$ close to $1$ such that
$\rho p \leq \frac{2d}{d-2}$.
Taking into account
\begin{align}
    |H_3| \leq C (| \wt R|^{p-1} + |\ve|^{p-1})|\ve|,
\end{align}
and Gagliardo-Nirenberg's inequality
we get
\begin{align}   \label{H2-Rj-cri}
   |\<H_3,  \wt R_k\>|
   \leq& \int (| \wt R|^{p-1} + |\ve|^{p-1})|\ve| |\na \wt  R_k - \frac{i}{2} v_k  \wt R_k| dx \nonumber \\
   \leq&  \sum\limits_{k=1}^K \| \wt R^{p-1}(\na  \wt R_k - \frac{i}{2} v_k  \wt R_k)\|_{L^{2}} \|\ve\|_{L^2}
          + \|\na  \wt R_k - \frac{i}{2} v_k  \wt R_k\|_{L^{\rho'}}\|\ve\|_{L^{\rho p}}^{p} \nonumber \\
   \leq& C\(\|\ve\|_{L^2} + \|\ve\|_{H^1}^{p}\)
   \leq C \|\ve\|_{H^1}.
\end{align}

Concerning the lower order perturbations,
applying Lemma \ref{Lem-decoup} again we have
\begin{align} \label{bnaR-Rj-cri}
 {\rm Re}\langle b_*\cdot\nabla \wt  R+c_* \wt R, \na  \wt R_k - \frac{i}{2} v_k  \wt R_k \rangle
&={\rm Re}\langle b_*\cdot\nabla  \wt R_k+c_* \wt R_k, \na  \wt R_k - \frac{i}{2} v_k  \wt R_k \rangle + \calo(e^{-\delta t}),
\end{align}
where the implicit constant is independent of $\omega$,
due to \eqref{B*-wk-ak-deter-bdd}.
Note that,
by \eqref{b-def}, \eqref{nablaR} and the change of variables,
\begin{align}  \label{bnaR-R-esti}
    & |{\rm Re} \<b_*\cdot \na \wt  R_k, \wt  R_k\>|
   = 2 \bigg|\sum\limits_{l=1}^N B_{*,l} {\rm Im} \<\na \phi_l \cdot \na  \wt R_k, \na  \wt R_k - \frac{i}{2} v_k  \wt R_k\> \bigg| \nonumber \\
   \leq& C  B_*  \sum\limits_{l=1}^N \( |v_k| \int |\na \phi_l(y+v_kt+\a_k)| |Q_{w_k}\na Q_{w_k}(y)| dy
          + \int |\na \phi_l(y+v_kt+\a_k)||\na Q_{w_k}(y)|^2 dy \).
\end{align}
Since by \eqref{B*-wk-ak-deter-bdd},
$|\a_k| \leq 2 |x_k^0|$,
and for $|y|\leq \frac{|v_k| t}{2w_k}$ and $t$ large enough such that $t\geq \frac{8|x_k^0|}{|w_k|}$,
$|y + v_k t +\a_k| \geq \frac 12 |v_k| t - |\a_k| \geq \frac 14 |v_k|t$.
Then, by the exponential decay of $Q$,
the lower bound $\inf_{t}w_{k} >0$, $\min_{1\leq k\leq K} |v_k|>0$
and the decay conditions in Assumption $(A1)$,
the first integration on the R.H.S. above can be bounded by
\begin{align}  \label{naphinaQ-esti}
  &  CB^*\int_{|y|\leq\frac{|v_k| t}{2}}|\nabla\phi_l(y+v_kt+\alpha_k)||Q_{w_k}\na Q_{w_k}(y)| dy
+C\int_{|y|\geq\frac{|v_k|t}{2}} | Q_{w_k}\na Q_{w_k}(y)|dy \nonumber  \\
\leq& CB^* \( \phi(\frac 14 |v_k|t) \int |\na Q_{w_k} Q_{w_k}(y)| dy +e^{-\delta t} \)   \nonumber \\
 \leq& CB^*\( \phi(\frac 14 |v_k|t)+ e^{-\delta t}\).
\end{align}
Similarly, we have that for some $\delta>0$,
\begin{align} \label{naphiRk-esti}
     \int |\na \phi_l(y+v_kt+\a_k)||\na Q_{w_k}(y)|^2 dy
     \leq C \( \phi(\frac 14 |v_k|t) + e^{-\delta t}\).
\end{align}
Hence, plugging \eqref{naphinaQ-esti} and \eqref{naphiRk-esti} into \eqref{bnaR-R-esti}
we obtain that
\begin{align} \label{b*naRk-bdd}
    |{\rm Re} \<b_* (t) \cdot \na  \wt R_k(t), \na  \wt R_k(t) - \frac{i}{2} v_k \wt  R_k(t)\>|
    \leq CB^*(t)  \( \phi(\delta_1 t)  + e^{-\delta_2 t}\),
\end{align}
where $C,\delta_1, \delta_2> 0$ are universal deterministic constants.

Moreover, by \eqref{c*-def} and analogous arguments,
\begin{align} \label{c*naRk-bdd}
    & |{\rm Re} \<c_*  \wt R_k, \na \wt  R_k - \frac{i}{2} v_k  \wt R_k\>|   \nonumber \\
   =& \bigg|{\rm Re}\<\sum\limits_{j=1}^d
      \(\sum\limits_{l=1}^N B_{*,l} \partial_j \phi_l \)^2 \wt  R_k
      - i \sum\limits_{l=1}^N \Delta \phi_l B_{*,l} \wt R_k, \na  \wt R_k - \frac{i}{2} v_k  \wt R_k\> \bigg|   \nonumber \\
   \leq& C \sum\limits_{j=1}^d \sum\limits_{l=1}^N (B_{*,l}+B_{*,l}^2)
        \int (|\partial_j \phi_l(y+v_kt+\a_k)|^2 + |\Delta\phi_l(y+v_kt+\a_k)|)
        |Q_{w_k}\na Q_{w_k}(y)| dy \nonumber \\
   \leq& C \sum\limits_{j=1}^d \sum\limits_{l=1}^N  B_{*,l}
           \(\int\limits_{|y|\leq \frac{|v_k|t}{2}}
           (|\partial_j \phi_l(y+v_kt+\a_k)|^2 + |\Delta\phi_l(y+v_kt+\a_k)|)  |Q_{w_k}\na Q_{w_k}(y)|dy + e^{-\delta t}\) \nonumber \\
   \leq& C B_*  \( \phi(\delta_1 t)  + e^{-\delta_2 t} \).
\end{align}
Hence, plugging \eqref{b*naRk-bdd} and \eqref{c*naRk-bdd}  into \eqref{bnaR-Rj-cri}
we obtain
\begin{align} \label{bnaR-Rj-exp-cri}
|{\rm Re}\langle b_*(t)\cdot\nabla  \wt R(t)+c_*(t)  \wt R(t),
        \na  \wt R_k (t)- \frac{i}{2} v_k \wt  R_k(t) \rangle|
\leq C B_*(t) \(  \phi(\delta_1 t)  + e^{-\delta_2 t} \).
\end{align}
Using H\"{o}lder's inequality and $\|\wt R_k\|_{H^1}\leq C$
we also have
\begin{align} \label{bnave-Rj-cri}
|{\rm Re}\langle b_*(t)\cdot\nabla \ve(t)+c_*(t) \ve(t),
      \na \wt  R_k(t) - \frac{i}{2} v_k  \wt R_k(t) \rangle|
      \leq C B_*(t) \|\ve(t)\|_{H^1}.
\end{align}
Here, the constants in \eqref{bnaR-Rj-exp-cri} and \eqref{bnave-Rj-cri} are independent of $\omega$.

Thus, combining \eqref{H1-Rj-cri}, \eqref{H2-Rj-cri}, \eqref{bnaR-Rj-exp-cri} and \eqref{bnave-Rj-cri}
we prove \eqref{ptve-naRk-inner-RHS}, as claimed.

Therefore, we conclude from \eqref{ptve-nablaRj-sub}-\eqref{ptve-naRk-inner-RHS}
and the lower bound, via \eqref{B*-wk-ak-deter-bdd},
\begin{align*}
   \|\na Q_{w_k}\|_{L^2}
   = w_k^{-1} \|\na Q\|_{L^2}
   \geq \frac 12 (w_k^0)^{-1} \|\na Q\|_{L^2}
\end{align*}
that
\begin{align}  \label{dalpha-t-bdd}
 \frac 12 (w_k^0)^{-1} \|\na Q\|_{L^2} |\dot{\a}_k(t)|
 \leq  C \( (\|\ve(t)\|_{L^2} + e^{-\delta_2 t}) Mod(t) + \|\ve(t)\|_{H^1} + B_* (t) \phi(\delta_1 t)  + e^{-\delta_2 t}\),
\end{align}
where $C,\delta_1, \delta_2>0$ are universal deterministic constants.

$(ii)$ Estimate of $\dot{\theta}_k$.
Taking the inner product of \eqref{equa-ve-cri} with
$ \wt R_k$ and taking the real part we get
\begin{align} \label{ptve-Rk-inner}
&-{\rm Im}\langle\partial_t\ve,  \wt R_{k}\rangle+{\rm Re}\langle\Delta \ve,  \wt R_{k}\rangle
    -  \(\dot{\t}_k - (w_k^{-2}- (w_{k}^0)^{-2} )\)
      \| Q_{w_k}\|_{L^2}^2   \nonumber \\
=&  -{\rm Re} \langle H_1 + H_2 + H_3, \wt R_k\rangle
  - {\rm Re}\langle b_*\cdot\nabla  \wt R+ c_* \wt  R,  \wt R_k\rangle
  - {\rm Re}\langle b_*\cdot\nabla \ve+ c_* \ve,  \wt R_k\rangle .
\end{align}

Similarly to \eqref{ptve-naRk-inner-RHS},  we have
\begin{align} \label{ptve-Rk-inner-RHS}
   |{\rm R.H.S.\ of\ \eqref{ptve-Rk-inner}}|
   \leq C\(\|\ve(t)\|_{H^1} + B_*(t) \phi(\delta_1 t) + (Mod(t)+1)e^{-\delta_2 t}\),
\end{align}
where $C, \delta_1, \delta_2$ are universal deterministic constants.

For the L.H.S. of \eqref{ptve-Rk-inner},
we note that, by \eqref{ortho-cond-R-ve-cri} and \eqref{equa-Rk-cri},
\begin{align} \label{ptve-Rj-cri}
-{\rm Im}\langle\partial_t\ve,  \wt R_{k}\rangle={\rm Im}\langle\ve, \partial_t \wt R_{k}\rangle
=\calo( Mod_k+1)
\|\ve\|_{L^2},
\end{align}
and
\begin{align} \label{Deve-Rk-cri}
   {\rm Re} \< \Delta \ve,  \wt R_k\>
   = {\rm Re} \<\ve, \Delta  \wt R_k\>
   =\calo(\|\ve\|_{L^2}).
\end{align}

Therefore, we conclude from   \eqref{ptve-Rk-inner-RHS}-\eqref{Deve-Rk-cri}
and the identity
\begin{align*}
    \|\wt R_k\|_{L^2}^2 = \|Q_{w_k}\|_{L^2}^2 = \|Q\|_{L^2}^2
\end{align*}
that for some deterministic constants $C, \delta_1, \delta_2> 0 $,
\begin{align}  \label{dthetak-t-bdd}
     & \| Q\|_{L^2}^2 | \dot{\t}_k(t)-(w_{k}^{-2}(t)-w_{k,0}^{-2})|  \nonumber \\
 \leq&  C \( (\|\ve(t)\|_{L^2} + e^{-\delta_2 t}) Mod(t) + \|\ve(t)\|_{H^1} + B_* (t)\phi(\delta_1 t) + e^{-\delta_2 t}\).
\end{align}

$(iii)$  {Estimate of $\dot{w}_k$}.
Taking the inner product of \eqref{equa-ve-cri} with
$\Lambda_k \wt R_k-\frac{i}{2}v_k\cdot y_k \wt  R_k$,
$y_k$ as in \eqref{Lambdak-def},
then taking the imaginary part
and using the identity
\begin{align} \label{LambdaRk-LambdaQ}
   \Lambda_k \wt R_k(t,x) -\frac{i}{2}v_k\cdot (x-v_kt - \a_k)  \wt R_k(t,x)
   = \Lambda_k Q_{w_k(t)} (x-v_kt-\a_k)  e^{i\Phi_k(t,x) },
\end{align}
we derive that
\begin{align} \label{ptve-lambdaRk-inner}
&{\rm Re}\langle\partial_t\ve,\Lambda_k \wt R_k-\frac{i}{2}v_k\cdot y_k  \wt R_k\rangle
  +{\rm Im}\langle\Delta \ve,\Lambda_k \wt R_k-\frac{i}{2}v_k\cdot y_k  \wt R_k\rangle
  -   \frac{\dot{w_k}}{w_k} \| \Lambda Q_{w_k}\|_{L^2}^2    \nonumber \\
  & -   \dot{\alpha}_k {\rm Re}
  \langle \nabla Q_{w_k}   , \Lambda Q_{w_k}   \rangle
    -  \(\dot{\t}_k +((w_{k}^0)^{-2} - w_k^{-2})\)
    {\rm Im}\< Q_{w_k}   , \Lambda Q_{w_k} \> \nonumber \\
=&  -{\rm Im}\langle H_1 + H_2 + H_3,\Lambda_k  \wt R_k-\frac{i}{2}v_k\cdot y_k  \wt R_k\rangle \nonumber\\
  &- {\rm Im}\langle b_*\cdot (\nabla \wt  R + \na \ve)+c_*(  \wt R+ \ve), \Lambda_k \wt R_k-\frac{i}{2}v_k\cdot y_k  \wt R_k\rangle.
\end{align}

Again the {\rm R.H.S.} of \eqref{ptve-lambdaRk-inner}
contributes the orders as in \eqref{ptve-naRk-inner-RHS} and \eqref{ptve-Rk-inner-RHS}.

For  the {\rm L.H.S.} of  \eqref{ptve-lambdaRk-inner},
by \eqref{ortho-cond-R-ve-cri},
\eqref{LambdaRk-LambdaQ}
and the exponential decay \eqref{Q-decay},
\begin{align} \label{ptve-lambdaRj-sub}
 {\rm Re}\langle\partial_t\ve, \Lambda_k \wt R_k-\frac{i}{2}v_k\cdot y_k  \wt R_k\rangle
  ={\rm Re}\langle\ve, \partial_t (\Lambda_k  \wt R_k - \frac i2 v_k\cdot y_k  \wt R_k) \rangle
 =\calo( Mod_k +1)
\|\ve\|_{L^2},
\end{align}
and
\begin{align*}
   {\rm Im} \<\Delta \ve, \Lambda_k \wt R_k - \frac i2 v_k\cdot y_k \wt R_k\>
   = {\rm Im} \< \ve, \Delta ( \Lambda_k \wt R_k - \frac i2 v_k\cdot y_k \wt R_k)\>
   = \calo(\|\ve\|_{L^2}).
\end{align*}

Moreover,
we have
\begin{align}  \label{naQ-LambdaQ}
   {\rm Re} \<\na Q_{w_k}, \Lambda Q_{w_k} \> =0, \ \
   {\rm Im} \< Q_{w_k}, \Lambda Q_{w_k}\> =0.
\end{align}

Thus, takeing $w_k$ close to $w_k^0$
such that $\|\Lambda Q_{w_k}\|_{L^2} \geq \frac 12 \|\Lambda Q_{w_k^0}\|_{L^2}$
we obtain that
\begin{align}  \label{dwk-t-bdd}
  \frac 12 \|\Lambda Q_{w^0_k}\|_{L^2}^2| \dot{w}_k(t)|
 \leq C\( (\|\ve(t)\|_{L^2} + e^{-\delta_2 t}) Mod(t) + \|\ve(t)\|_{H^1} + B_*(t)  \phi(\delta_1 t)  + e^{-\delta_2 t}\).
\end{align}

Therefore,
combining \eqref{dalpha-t-bdd}, \eqref{dthetak-t-bdd} and \eqref{dwk-t-bdd} together
we conclude that
\begin{align}
   Mod(t) \leq C \( (\|\ve(t)\|_{L^2} + e^{-\delta_2 t}) Mod(t) + \|\ve(t)\|_{H^1} + B_*(t)\phi(\delta_1 t) + e^{-\delta_2 t}\).
\end{align}
where $C,\delta_1, \delta_2>0$ are deterministic constants.
Hence, for $t$ close to $T$ and large enough
such that
$C(\|\ve(t)\|_{L^2} + e^{-\delta_2 t}) \leq 1/2$
we obtain
\begin{align}
   Mod (t) \leq C \(  \|\ve(t) \|_{H^1} + B_*(t) \phi(\delta_1 t) + e^{-\delta_2 t} \).
\end{align}
The proof of Proposition \ref{Prop-Mod-cri} is complete. \hfill $\square$

\subsection{Subcritical case}  \label{Subsec-Geom-subcri}

In the subcritical case,
we only need to control three unstale directions,
corresponding to the coercivity of the linearized operator.
Two of them will be controlled by the following geometrical decomposition
and the remaining one ${\rm Re}\<\wt R_k,\ve\>$ can be controlled by the almost conservation of the local mass
in Section \ref{Sec-Local-Wein-cri} below.

\begin{proposition} (Geometrical decomposition)  \label{Prop-dec-un-sub}
Assume that $u$ solves \eqref{equa-un-tn} with $1<p<1+\frac4d$.
For any $T$  sufficiently large,
there exist  $0\leq T^*<T$ and unique modulation parameters
$\mathcal{P}_k:= (\a_k, \theta_k)
\in C^1([T^*, T]; \bbr^{d}\times \bbr)$, $1\leq k\leq K$,
such that $u$ admits the geometrical decomposition
\begin{align} \label{geo-dec-sub}
    u(t,x)
    =\sum_{k=1}^{K} \wt R_{k}(t,x) + \ve(t,x)\ \(=: \wt R(t,x) + \ve(t,x)\),
\end{align}
where for every $1\leq k\leq K$,
\be \label{Rk-Qk-sub}
    \wt R_{k}(t,x) := Q_{w_{k}^0} \(x-v_k t-\alpha_{k}(t) \)e^{i(\half v_k\cdot x-\frac{1}{4}|v_k|^2t + (w_{k}^0)^{-2}t+\t_{k}(t))},
\ee
the modulation parameters satisfy
\be
\varepsilon(T)=0, \quad \calp_k(T)=(x_k^0,\t^0_k),
\ee
and the following orthogonality conditions hold on $[T^*,T] $:
\be\ba\label{ortho-Rk-ve-sub}
   {\rm Re}\int \nabla \wt  R_{k}(t) \ol{\ve}(t)dx=0,
   \ \ {\rm Im} \int  \wt R_{k}(t) \ol{\ve}(t)dx=0.
\ea\ee
\end{proposition}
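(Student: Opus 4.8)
The plan is to obtain the decomposition from the implicit function theorem, following exactly the scheme used for the critical case (Proposition~\ref{Prop-dec-un-cri}), but with the frequencies frozen at $w_k\equiv w_k^0$, so that only the $d+1$ real parameters $\calp_k=(\alpha_k,\theta_k)$ per bubble are modulated. For $\calp=(\calp_1,\dots,\calp_K)$ in a small neighbourhood of $\calp^0:=((x_1^0,\theta_1^0),\dots,(x_K^0,\theta_K^0))$, let $\wt R_k[\calp](t,\cdot)$ be given by~\eqref{Rk-Qk-sub}, put $\wt R[\calp]:=\sum_{k=1}^K\wt R_k[\calp]$, and for $1\le k\le K$, $1\le j\le d$ define, near $(T,\calp^0)$, the $K(d+1)$ functionals
\[
  \rho^0_{k,j}(t,\calp):={\rm Re}\int \partial_{x_j}\wt R_k[\calp](t)\,\ol{u(t)-\wt R[\calp](t)}\,dx,\qquad
  \rho^1_{k}(t,\calp):={\rm Im}\int \wt R_k[\calp](t)\,\ol{u(t)-\wt R[\calp](t)}\,dx.
\]
Setting $\ve(t):=u(t)-\wt R[\calp(t)](t)$, the orthogonality conditions~\eqref{ortho-Rk-ve-sub} are precisely $\rho^0_{k,j}(t,\calp(t))=\rho^1_k(t,\calp(t))=0$, and the decomposition~\eqref{geo-dec-sub} is then automatic. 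At $t=T$ the terminal data $u(T)=R(T)$ together with the choice $\calp(T)=\calp^0$ give $\wt R[\calp^0](T)=R(T)$, hence $\ve(T)=0$ and $\rho^0_{k,j}(T,\calp^0)=\rho^1_k(T,\calp^0)=0$.

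Next I would check that the Jacobian $\partial_\calp(\rho^0,\rho^1)$ is invertible at $(T,\calp^0)$. Since $\partial_{\alpha_{k,l}}\wt R_k=-(\partial_{x_l}Q_{w_k^0})(x-v_kt-\alpha_k)e^{i\Phi_k}$ and $\partial_{\theta_k}\wt R_k=i\wt R_k$, and since $\ve(T)=0$ annihilates the terms where the $\calp$-derivative falls on $\partial_{x_j}\wt R_k$ or $\wt R_k$, the Jacobian at $(T,\calp^0)$ is built from the overlap integrals $-{\rm Re}\int\partial_{x_j}\wt R_k\,\ol{\partial_\calp\wt R_m}\,dx$ and $-{\rm Im}\int\wt R_k\,\ol{\partial_\calp\wt R_m}\,dx$. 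For $m\ne k$ these are exponentially small, $\calo(e^{-\delta T})$, because $v_m\ne v_k$ separates the bubbles spatially by $\sim|v_m-v_k|t$ and $Q$ decays exponentially (this is the decoupling Lemma~\ref{Lem-decoup}); the $m=k$ diagonal blocks reduce, after the change of variables $y=x-v_kt-\alpha_k$ and using the radial symmetry of $Q$ (so that $\int Q_{w_k^0}\partial_{x_l}Q_{w_k^0}=0$ and ${\rm Re}\<\partial_{x_l}Q_{w_k^0},\partial_{x_j}Q_{w_k^0}\>=d^{-1}\delta_{lj}\|\na Q_{w_k^0}\|_{L^2}^2$), to a $(d+1)\times(d+1)$ matrix that is triangular with diagonal entries $d^{-1}\|\na Q_{w_k^0}\|_{L^2}^2$ and $\|Q_{w_k^0}\|_{L^2}^2$, hence invertible. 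Thus for $T$ large the full Jacobian is an $\calo(e^{-\delta T})$-perturbation of an invertible block-diagonal matrix, and so invertible.

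The implicit function theorem then produces, near $t=T$, a unique $C^1$ map $t\mapsto\calp(t)$ with $\calp(T)=\calp^0$ solving $\rho^0_{k,j}(t,\calp(t))=\rho^1_k(t,\calp(t))=0$, and a standard continuation argument extends it to a maximal interval $[T^*,T]$, $0\le T^*<T$, on which $\calp(t)$ stays in the chosen neighbourhood of $\calp^0$ and $\|\ve(t)\|_{H^1}$ stays small (shrinking the neighbourhood we may in addition arrange~\eqref{ve-T*-1} and the bounds used in the later sections); there the Jacobian remains invertible, which also gives uniqueness of the decomposition on $[T^*,T]$. The $C^1$-regularity of $\calp$ in $t$ follows by differentiating $\rho^0_{k,j}(t,\calp(t))=\rho^1_k(t,\calp(t))=0$, once one knows that $\rho^0_{k,j},\rho^1_k$ are jointly $C^1$: their only $t$-dependence through $u$ is of the form $\<u(t),\psi(t)\>$ with $\psi(t)$ a smooth, rapidly decaying family (namely $\partial_{x_j}\wt R_k$, $\wt R_k$ and their $\calp$-derivatives), and $t\mapsto\<u(t),\psi(t)\>$ is $C^1$ by the weak formulation of~\eqref{equa-un-tn} since $u\in C([T_0,\infty);H^1)$. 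The one genuinely delicate step is the non-degeneracy of the Jacobian, i.e.\ combining the decoupling of bubbles with distinct velocities with the $Q$-identities above; everything else is routine, and—as noted after the statement—the argument here is strictly easier than in the critical case, where the extra parameter $w_k$ and the extra orthogonality condition ${\rm Re}\<\Lambda_k\wt R_k-\frac{i}{2}v_k\cdot y_k\wt R_k,\ve\>=0$ must also be handled; the price is that the unstable direction ${\rm Re}\<\wt R_k,\ve\>$ is not removed here and will instead be controlled later via the almost-conservation of the local mass.
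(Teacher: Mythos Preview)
Your proposal is correct and follows essentially the same route as the paper: the paper's own proof (see the remark following the statement) simply observes that the Jacobian matrices in the subcritical case are obtained from those of Lemma~\ref{Lem-Geo-Decom} by deleting the row $f_3^k$ and the column $\wt w_k$, so that the non-degeneracy \eqref{Jacob-posit} persists and the implicit function theorem argument carries over verbatim. Your computation of the diagonal blocks as triangular with entries $d^{-1}\|\na Q_{w_k^0}\|_{L^2}^2$ and $\|Q_{w_k^0}\|_{L^2}^2$ is exactly the content of \eqref{F-P-Jacob} with $f_3^k$ removed; the only cosmetic difference is that the paper applies the implicit function theorem with $u\in H^{-1}$ as the independent variable (Lemma~\ref{Lem-Geo-Decom}) and then composes with $t\mapsto u(t)\in C^1([T^*,T];H^{-1})$, whereas you work directly in the $(t,\calp)$ variables.
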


\begin{remark}
$(i)$. We note that, in  \eqref{Rk-Qk-sub}
$Q_{w_{k}^0}$ is indexed by a fixed parameter $w_k^0$,
which is different from the previous soliton profile \eqref{Rk-Qk-cri}
in the critical case
and from \cite{MM06} in the subcritical case,
where the parameter $w_k$ depends on time.

$(ii)$. The proof of Proposition \ref{Prop-dec-un-sub}
is quite similar to that of Proposition \ref{Prop-dec-un-cri}.
Actually, the corresponding Jacobian matrices $(\frac{\partial F^k}{\partial \wt \calp_j})$
can be obtained from those in the proof of Lemma \ref{Lem-Geo-Decom}
by removing $f_3^k$ and $\wt w_k$.
Hence, by \eqref{Jacob-posit},
the Jacobian matrix $\frac{\partial F}{\partial \wt \calp}$ is still
uniformly non-degenerate,
the arguments there are  applicable in the subcritical case.
\end{remark}

As in the previous critical case,
for $\bbp$-a.e. $\omega\in \Omega$,
we take a random time $T_*(\omega)>0$  large enough such that
\eqref{B*-T*-1}-\eqref{B*-wk-ak-deter-bdd}
hold on $[T^*,T]$,
and so $B_*$, $\|\ve\|_{H^1}$ and $|\a_k|$
bounded by a deterministic constant
on $[T^*,T]$.

Using the decomposition \eqref{geo-dec-sub} and orthogonality condition \eqref{ortho-Rk-ve-sub},
we can use similar arguments as in the proof of Proposition \ref{Prop-Mod-cri}
to  derive the control of modulation equations.

\begin{proposition} (Control of modulation equations)   \label{Prop-Mod-sub}
There exist deterministic constants $C, \delta_1, \delta_2>0$
such that
for $T$ large enough, $T^*$ close to $T$
and for any $t\in [T^*,T]$,
\begin{align} \label{Mod-esti-sub}
\sum_{k=1}^{K}(|\dot{\alpha}_{k}(t)|+|\dot{\t}_{k}(t)|)
 \leq C(  \|\ve(t)\|_{H^1} +  B_*(t) \phi(\delta_1 t)  + e^{-\delta_2 t}).
\end{align}
\end{proposition}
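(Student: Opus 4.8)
The plan is to mirror the proof of Proposition \ref{Prop-Mod-cri}, exploiting the essential simplification that in the subcritical case the frequency is frozen, $w_k\equiv w_k^0$, so there is no $\dot w_k$ equation and the modulation system collapses to two scalar equations per soliton, for $\dot\alpha_k$ and $\dot\theta_k$. First I would set the phase $\Phi_k(t,x):=\half v_k\cdot x-\frac14|v_k|^2t+(w_k^0)^{-2}t+\theta_k(t)$ and differentiate $\wt R_k$ from \eqref{Rk-Qk-sub}; since $w_k^0$ is constant, the $\Lambda_k$-term disappears and
\begin{align*}
  i\partial_t\wt R_k=\Big(\tfrac{|v_k|^2}{4}-(w_k^0)^{-2}-\dot\theta_k\Big)\wt R_k-i(\dot\alpha_k+v_k)\cdot\nabla Q_{w_k^0}(x-v_kt-\alpha_k)\,e^{i\Phi_k}.
\end{align*}
Together with \eqref{equa-Qw} and the formula for $\Delta\wt R_k$ this gives the subcritical analogue of \eqref{equa-Rk-cri}, and inserting the decomposition \eqref{geo-dec-sub} into \eqref{equa-un-tn} produces an equation for $\ve$ of the form \eqref{equa-ve-cri} with source terms $H_1,H_2,H_3$ defined as in \eqref{H1-def}--\eqref{H3-def} (but with every $\dot w_j$ contribution deleted, so $H_1$ only contains the $\dot\alpha_j,\dot\theta_j$ terms) plus the lower-order perturbations $-b_*\cdot(\nabla\wt R+\nabla\ve)-c_*(\wt R+\ve)$.

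Next I would isolate the two remaining parameters by the two pairings dictated by \eqref{ortho-Rk-ve-sub}. Pairing the $\ve$-equation with $\nabla\wt R_k-\frac i2 v_k\wt R_k$ and taking the imaginary part, the orthogonality conditions kill the $\partial_t\ve$ and $\Delta\ve$ contributions up to $\calo((Mod_k+1)\|\ve\|_{L^2})$ and $\calo(\|\ve\|_{L^2})$ exactly as in \eqref{ptve-nablaRj-sub}--\eqref{Dve-naRk}, while the radial identities $\langle\Lambda Q_{w_k^0},\nabla Q_{w_k^0}\rangle=0=\langle Q_{w_k^0},\nabla Q_{w_k^0}\rangle$ remove the cross terms, leaving $\dot\alpha_k\|\nabla Q_{w_k^0}\|_{L^2}^2$ on the left. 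Pairing instead with $\wt R_k$ and taking the real part isolates $\dot\theta_k\|Q_{w_k^0}\|_{L^2}^2$ — here the frozen frequency makes the parenthesis genuinely $\dot\theta_k$ — again modulo $\calo((Mod_k+1)\|\ve\|_{L^2})$ terms. For the right-hand sides I would transcribe the estimates of Section \ref{Sec-Geom-dec}: Lemma \ref{Lem-decoup} gives $|\langle H_1+H_2,\cdot\rangle|\leq C(Mod+1)e^{-\delta t}$; the nonlinear remainder satisfies $|\langle H_3,\cdot\rangle|\leq C\|\ve\|_{H^1}$ by Gagliardo--Nirenberg with an exponent $\rho p\leq 2d/(d-2)$, available since $p<1+\frac4d$; and the lower-order terms split, via Lemma \ref{Lem-decoup} and the change of variables $y=x-v_kt-\alpha_k$, into a piece $\leq CB_*(t)(\phi(\delta_1 t)+e^{-\delta_2 t})$ from $b_*\cdot\nabla\wt R+c_*\wt R$ (using the exponential decay \eqref{Q-decay} of $Q$, the lower bound $\inf_t w_k>0$, $\min_k|v_k|>0$, and the decay of $\phi$ and its derivatives from $(A1)$) and a piece $\leq CB_*(t)\|\ve\|_{H^1}$ from $b_*\cdot\nabla\ve+c_*\ve$ by H\"older; all constants are deterministic by \eqref{B*-wk-ak-deter-bdd}.

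Combining these, and using the lower bounds $\|\nabla Q_{w_k^0}\|_{L^2}^2,\ \|Q_{w_k^0}\|_{L^2}^2\geq c>0$ (no longer needing $w_k$ close to $w_k^0$, since it is fixed), I would obtain
\begin{align*}
  \sum_{k=1}^K\big(|\dot\alpha_k(t)|+|\dot\theta_k(t)|\big)\leq C\Big((\|\ve(t)\|_{L^2}+e^{-\delta_2 t})\,Mod(t)+\|\ve(t)\|_{H^1}+B_*(t)\phi(\delta_1 t)+e^{-\delta_2 t}\Big),
\end{align*}
and then absorb the $Mod(t)$ term: for $T$ large and $T^*$ close to $T$, \eqref{ve-T*-1} and the smallness of $e^{-\delta_2 t}$ force $C(\|\ve(t)\|_{L^2}+e^{-\delta_2 t})\leq\frac12$, which yields \eqref{Mod-esti-sub}. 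The step I expect to be the main obstacle is, as in the critical case, the careful treatment of $\langle b_*\cdot\nabla\wt R+c_*\wt R,\cdot\rangle$: each spatial integral must be split at $|y|\leq\frac{|v_k|t}{2}$, using $|y+v_kt+\alpha_k|\geq\frac14|v_k|t$ there (which needs $t\geq 8|x_k^0|/w_k^0$ and the deterministic bound $|\alpha_k|\leq 2|x_k^0|$ from \eqref{B*-wk-ak-deter-bdd}) while the complementary region is controlled by the exponential tail of $Q$, so as to extract precisely the factor $\phi(\delta_1 t)$ governing the decay rate. Everything else is the critical-case argument of Proposition \ref{Prop-Mod-cri} with the $\dot w_k$ mechanism removed.
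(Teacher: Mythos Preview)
Your proposal is correct and follows essentially the same approach as the paper's proof: derive the $\ve$-equation from \eqref{Rk-Qk-sub} and \eqref{equa-un-tn} with the simplified $H_1$ (no $\dot w_j$ terms), pair with $\wt R_k$ (real part) and with $\nabla\wt R_k-\tfrac{i}{2}v_k\wt R_k$ (imaginary part) to isolate $\dot\theta_k$ and $\dot\alpha_k$, estimate the right-hand sides exactly as in Proposition~\ref{Prop-Mod-cri}, and absorb the $Mod(t)$ term for $t$ close to $T$. The only minor overstatement is invoking $\langle\Lambda Q_{w_k^0},\nabla Q_{w_k^0}\rangle=0$, which is not needed here since no $\dot w_k$ term appears; otherwise your outline matches the paper's argument.
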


{\bf Proof.}
The arguments follow the lines as in the proof of Proposition \ref{Prop-Mod-cri}.
Using the explicit formula \eqref{Rk-Qk-sub} we compute
\begin{align} \label{dt-R}
i\partial_t \wt R_k(t,x)
 =\(\frac{|v_k|^2}{4}- (w_k^0)^{-2}-\dot{\t}_k(t) \) \wt R_k(t,x)
   -i(\dot{\alpha}_k(t)+v_k)\cdot\nabla Q_{w_k^0} (x-v_kt-\a_k) e^{i\Phi_k(t,x)},
\end{align}
where $\Phi_k$ is as in \eqref{phase-fct}.

Then, by \eqref{equa-Qw}, $ \wt R_k$ satisfies the equation
\begin{align}  \label{equa-Rk-sub}
  & i\partial_t \wt R_k(t,x)+\Delta  \wt R_k(t,x) +| \wt R_k(t,x)|^{p-1}  \wt R_k(t,x) \nonumber \\
 =& -i\dot{\alpha}_k(t) \nabla Q_{w_k^0} (x-v_kt-\a_k) e^{i\Phi_k(t,x)}
  -\dot{\t}_k (t) \wt R_k(t,x)
\end{align}

It then follows from equations \eqref{equa-u-low} and \eqref{equa-Rk-sub} that
\begin{align}   \label{equa-ve-sub}
  & i\partial_t\ve+\Delta\ve
 -  i\dot{\alpha}_k \nabla Q_{w_k^0}  (x-v_kt-\a_k) e^{i\Phi_k}   -  \dot{\t}_k \wt R_k
\nonumber \\
=& -H_1-H_2-H_3  - b_*\cdot(\nabla  \wt R+\nabla \ve) - c_*( \wt R+\ve),
\end{align}
where
\begin{align}
  & H_1:= -\sum\limits_{j\not = k} \(
        i\dot{\alpha}_j \nabla Q_{w^0_j} +  \dot{\t}_j  Q_{w^0_j}\)(x-v_kt-\a_k) e^{i\Phi_j}, \\
  & H_2:=| \wt R|^{p-1}  \wt R-\sum_{k=1}^{K}| \wt R_k|^{p-1}  \wt R_k,   \\
  & H_3:=| \wt R+\ve|^{p-1}( \wt R+\ve)-| \wt R|^{p-1} \wt  R.
\end{align}

Now, taking the inner product of \eqref{equa-ve-sub} with
$ \wt R_k$ and then taking the real part
we can control the dynamic of $\dot{\theta_k}$
\begin{align} \label{thetaj-esti-sub}
\|Q_{w_k^0}\|_{L^2}^2|\dot{\t}_k(t)|
\leq C(\|\ve(t)\|_{H^1} + B_* (t)\phi(\delta_1 t) + (Mod(t)+1)e^{-\delta_2 t}).
\end{align}

Moreover,
taking the inner product of \eqref{equa-ve-sub} with
$\nabla  \wt R_k-\frac{i}{2}v_k \wt R_k$ and then taking the imaginary part
we get the estimate of $\dot{\a_k}$
\be
\|\nabla Q_{w_k^0}\|_{L^2}^2|\dot{\alpha}_k(t)|
\leq   C(\|\ve(t)\|_{H^1} + B_*(t) \phi(\delta_1 t) + (Mod(t)+1)e^{-\delta_2 t}).
\ee
Here $C,\delta_1,\delta_2$  are deterministic positive constants.
Therefore, summing over $k$ and taking $t$ close to $T$ we obtain \eqref{Mod-esti-sub}
and finish the proof.
\hfill $\square$

\section{Local quantities and Lyapunov type functional} \label{Sec-Local-Wein-cri}

In this section we  control several important functionals,
including the local mass, local momentum, energy and
the Lyapunov  type functional,
for the subcritical and critical cases where $1<p\leq 1+\frac 4d$
simultaneously.

Note that,
these functionals depend on Brownian paths
and the energy is no longer conserved in the stochastic case.
Below we perform the path-by-path analysis in order to obtain the sharp estimates.
As in  Section \ref{Sec-Geom-dec},
for $\bbp$-a.e. $\omega\in \Omega$,
we take a random time $T_*(\omega)>0$  large enough such that
\eqref{B*-T*-1}-\eqref{B*-wk-ak-deter-bdd}
hold on $[T^*(\omega),T]$,
and so $B_*(\omega)$, $\|\ve(\omega)\|_{H^1}$,
$|\a_k(\omega)|$, $|w_k(\omega)|$ and $|w_k^{-1}(\omega)|$ are
bounded by a deterministic constant
on $[T^*(\omega),T]$.
For simplicity,  the dependence on $\omega$  is omitted.

\subsection{Local mass and local momentum}

Let us start with the analysis of the local mass. Because equation \eqref{equa-u-low} is invariant under the orthogonal transform,
we may take an orthonormal basis $\{\textbf{e}_j\}_{j=1}^d$ of $\bbr^d$ as in \cite{MM06},
such that $(v_j-v_k)\cdot \textbf{e}_1 \not = 0$ for any $j\not = k$.
Let $v_{k,1}:=v_k\cdot \textbf{e}_1, 1\leq k\leq K$. Without loss of generality,
we may assume that $v_{1,1}<v_{2,1}<\cdots<v_{K,1}$.
Following \cite{CF20} (see also  \cite{MM06}),
we set $A_0:=\frac{1}{4}\min_{2\leq k\leq K}\{v_{k,1}-v_{k-1,1}\}$
and $\sigma_k :=\frac{1}{2}(v_{k-1,1}+v_{k,1})$, $2\leq k\leq K$.
Let $\psi(x)$ be a smooth nondecreasing function on $\R$ such that $0\leq \psi\leq 1$,
$\psi(x)=0$ for $x\leq -A_0$,  $\psi(x)=1$ for $x>A_0$,
and there exists $C>0$ such that
\be
(\psi^\prime(x))^2\leq C\psi(x),\quad (\psi^{\prime\prime}(x))^2\leq C\psi^\prime(x),\ \ x\in \bbr^d.
\ee
The  localization functions are defined by
\be\ba
\varphi_1(t,x)=1-\psi\(\frac{x_1-\sigma_2t}{t}\),\quad \varphi_K(t,x)=\psi\(\frac{x_1-\sigma_Kt}{t}\),\\
\varphi_k(t,x)=\psi\(\frac{x_1-\sigma_kt}{t}\)-\psi\(\frac{x_1-\sigma_{k+1}t}{t}\), \quad 2\leq k\leq K-1.
\ea\ee
We have the  partition of unity $\sum_{k=1}^K \varphi_k(t,x)=1$.
Moreover,  for every $1\leq k\leq K$,
\be\label{de-phi}
|\varphi_k^{\prime}(t,x)|+|\varphi_k^{\prime\prime\prime}(t,x)|+|\partial_t\varphi_k(t,x)|\leq \frac{C}{t}.
\ee

For $1\leq k\leq K$, define the local mass and local momentum by
\be
I_{k}(t):=\int|u(t,x)|^2\varphi_k(t,x)dx, \quad M_{k}(t):={\rm Im}\int \nabla u(t,x)\bar{u}(t,x)\varphi_k(t,x)dx.
\ee

Though the local mass and local momentum are no longer conserved,
the explicit estimates in Proposition \ref{Prop-Mass-mom-cri} below
show that both local quantities are almost conserved.

\begin{proposition} (Control of local mass and local momentum) \label{Prop-Mass-mom-cri}
We have that for any $t\in [T^*,T]$,
\begin{align} \label{dtIk-cri}
\bigg|\frac{d}{dt}I_k(t)\bigg|\leq \frac{C}{t}(\|\ve(t)\|_{H^1}^2+e^{-\delta t}),
\end{align}
and
\begin{align}  \label{dtMk-cri}
\bigg|\frac{d}{dt}M_k(t)\bigg|
   \leq \frac{C}{t}(\|\ve(t)\|_{H^1}^2+e^{-\delta_2 t})
   +C B_*(t) (\|\ve(t)\|_{H^1}^2+ \phi(\delta_1 t)+e^{-\delta_2 t} ),
\end{align}
where $C,\delta_1, \delta_2>0$
are deterministic positive constants.
\end{proposition}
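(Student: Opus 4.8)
The plan is to differentiate $I_k$ and $M_k$ in time, use equation \eqref{equa-un-tn} to replace $\partial_t u$ by spatial terms, and then estimate each resulting contribution using the geometrical decomposition $u = \wt R + \ve$ together with the decay of $Q$, the smallness of $\ve$ in $H^1$, and the bounds \eqref{de-phi} on the localization functions $\varphi_k$. First I would compute
\begin{align*}
   \frac{d}{dt} I_k(t)
   = \int |u|^2 \partial_t \varphi_k \, dx + 2 {\rm Re} \int \partial_t u \, \bar u \, \varphi_k \, dx.
\end{align*}
For the first term, \eqref{de-phi} gives a factor $t^{-1}$ and the integrand is supported (essentially) away from the soliton centers $v_k t + \a_k$, so by the decoupling of the $R_j$'s (Lemma \ref{Lem-decoup}) and the exponential decay of $Q$ one gets $\mathcal O(t^{-1}(\|\ve\|_{H^1}^2 + e^{-\delta t}))$. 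For the second term, substituting $i \partial_t u = -\Delta u - |u|^{p-1}u - b_*\cdot\nabla u - c_* u$ and taking the real part kills the nonlinear term $|u|^{p-1}u$ (it is real-phased against $\bar u$) and, after integration by parts, the Laplacian term produces only boundary-type contributions involving $\nabla \varphi_k$, again carrying a factor $t^{-1}$ and localized away from the solitons. The lower-order terms $b_*\cdot\nabla u$ and $c_* u$ are handled using that $\mu$ was chosen precisely so that the mass-type quantity is only weakly affected; more concretely, $b_* = 2\nabla W_*$ is purely imaginary-valued as a vector of $\na\phi_l$ times real coefficients, so ${\rm Re}\langle b_*\cdot\nabla u, \bar u\varphi_k\rangle$ integrates by parts to something controlled by $\|\na\varphi_k\|_{L^\infty} B_*$, i.e. $\mathcal O(t^{-1}B_*)$, which is absorbed into the claimed bound since $B_* \leq 1$; the $c_* u$ term is similar. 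Collecting gives \eqref{dtIk-cri}.

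For $M_k$ I would proceed analogously but expect more terms to survive, which is why the bound \eqref{dtMk-cri} carries the extra $B_*(t)(\|\ve\|_{H^1}^2 + \phi(\delta_1 t) + e^{-\delta_2 t})$. Differentiating,
\begin{align*}
   \frac{d}{dt} M_k(t)
   = {\rm Im}\int \nabla u\, \bar u\, \partial_t\varphi_k\,dx
     + {\rm Im}\int \nabla(\partial_t u)\,\bar u\,\varphi_k\,dx
     + {\rm Im}\int \nabla u\, \overline{\partial_t u}\,\varphi_k\,dx.
\end{align*}
Again replace $\partial_t u$ via the equation. The $\Delta u$ contributions combine, after integration by parts, into the classical virial-type identity for the momentum density, whose ``bad'' part is localized in $\nabla\varphi_k$ and hence $\mathcal O(t^{-1})$; the nonlinear $|u|^{p-1}u$ contributions similarly reduce to terms with $\nabla\varphi_k$, giving $\mathcal O(t^{-1})$. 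The genuinely new terms come from the lower-order perturbations: ${\rm Im}\langle \nabla(b_*\cdot\nabla u), \bar u \varphi_k\rangle$ and the $c_*$ analogue do not integrate by parts to pure boundary terms because $b_*, c_*$ are $x$-dependent with non-trivial gradients; bounding them requires splitting the $x$-integral into the region near each soliton center (where one uses $|\nabla^\upsilon\phi_l(y + v_kt + \a_k)| \leq \phi(\tfrac14|v_k|t)$ as in \eqref{naphinaQ-esti}) and the far region (handled by the exponential decay of $Q$), plus the contribution of $\ve$ handled by Hölder. This produces exactly the $B_*(\|\ve\|_{H^1}^2 + \phi(\delta_1 t) + e^{-\delta_2 t})$ term.

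I expect the main obstacle to be the careful bookkeeping of the lower-order perturbation terms in $\frac{d}{dt}M_k$: unlike the mass, the momentum genuinely feels the non-constant coefficients $b_*$ and $c_*$, and one must exploit both the structure of $b_* = 2\nabla W_*$ (so that several terms are perfect derivatives that can be integrated by parts onto $\varphi_k$ or $\phi_l$) and the spatial separation between the solitons and the regions where $\varphi_k$ has large gradient. A secondary point is that the cross terms in $|u|^2 = |\wt R|^2 + 2{\rm Re}(\wt R\bar\ve) + |\ve|^2$ and in the momentum density must be shown to be either $\mathcal O(t^{-1}e^{-\delta t})$ (pure soliton part, using that $\varphi_k$ localizes to a single soliton up to exponentially small error) or quadratic in $\ve$ — the linear-in-$\ve$ terms being exactly those that would be dangerous, and which must be shown to reduce to $\nabla\varphi_k$-localized or lower-order-coefficient-localized quantities after integration by parts. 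Once these reductions are in place, the remaining estimates are routine applications of Gagliardo–Nirenberg, Hölder, and the exponential decay \eqref{Q-decay}, together with \eqref{B*-wk-ak-deter-bdd} to keep all constants deterministic.
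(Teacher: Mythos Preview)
Your plan is essentially the same as the paper's and is correct in outline, but there is one point for the local mass that you gloss over and that is needed to get \eqref{dtIk-cri} exactly as stated (i.e., with \emph{no} bare $B_*(t)\phi(\delta_1 t)$ term, only the $t^{-1}$-weighted right-hand side). When you say ``the $c_* u$ term is similar'', that is not quite right: the contribution $-2\int {\rm Im}(c_*)\,|u|^2\varphi_k\,dx$ is supported on all of $\{\varphi_k>0\}$, not on $\{\nabla\varphi_k\neq 0\}$, so by itself it would produce a term of order $B_*(t)\phi(\delta_1 t)$ with no $t^{-1}$ in front. What actually happens is a structural cancellation: since $b_*=2\nabla W_*$ and $c_*=\sum_j(\partial_j W_*)^2+\Delta W_*$, one has ${\rm Im}(c_*)=\tfrac12\,{\rm div}({\rm Im}\,b_*)$, and the divergence term from integrating ${\rm Re}\langle ib_*\cdot\nabla u,\bar u\varphi_k\rangle$ by parts exactly kills the $c_*$ contribution, leaving only ${\rm Im}\int b_*\,|u|^2\cdot\nabla\varphi_k\,dx$. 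This is the precise mechanism behind your remark that the noise structure makes the mass ``only weakly affected'' (it is the $e^{W_*}$-conjugation structure, not $\mu$, that is relevant here). With this cancellation in hand, every surviving term in $\frac{d}{dt}I_k$ is supported on the set $\Omega_k$ where $\nabla\varphi_k$ or $\partial_t\varphi_k$ is nonzero, and your argument goes through.

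For $M_k$ your plan matches the paper. One simplification: your worry about linear-in-$\ve$ cross terms is unnecessary for the $\nabla\varphi_k$-localized pieces, since on $\Omega_k$ one has $\|\wt R\|_{H^1(\Omega_k)}\leq Ce^{-\delta t}$ and hence $\int_{\Omega_k}|u|^2+|\nabla u|^2\leq C(\|\ve\|_{H^1}^2+e^{-\delta t})$ directly by Cauchy--Schwarz; the only place the linear cross terms need the $\phi_l$-localization argument you describe (as in \eqref{naphinaQ-esti}) is in the $b_*,c_*$ contributions to $\frac{d}{dt}M_k$, where $\varphi_k$ itself (not its gradient) appears.
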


{\bf Proof.}
Using the integration-by-parts formula we compute
\be
\frac{d}{dt}I_k={\rm Im}\int (2\ol{u}\partial_{x_1}u+b_*|u|^2)\cdot \na \varphi_k dx
+ \int|u|^2\partial_t\varphi_kdx.
\ee

Note that,
the supports of $\vf'_k$ and $\partial_t\vf_k$
are contained in the regime
\begin{align*}
& \Omega_1=[(-A_0+\sigma_{2})t,(A_0+\sigma_{2})t]\times \R^{d-1}, \ \  \Omega_K=[(-A_0+\sigma_{K})t,(A_0+\sigma_{K})t]\times \R^{d-1},  \nonumber \\
& \Omega_k=[(-A_0+\sigma_{k})t,(A_0+\sigma_{k})t]\times \R^{d-1} \cup [(-A_0+\sigma_{k+1})t,(A_0+\sigma_{k+1})t]\times \R^{d-1}, 2\leq k\leq K-1.
\end{align*}
Taking into account \eqref{de-phi} we obtain
\begin{align}
\bigg|\frac{d}{dt}I_k(t)\bigg|\leq \frac{C}{t}\int_{\Omega_k}|u(t)|^2+|\nabla u(t)|^2dx.
\end{align}
Note that, for $x\in \Omega_k$ and $t$ large enough
so that $t\geq 4 A_0^{-1} \max_{1\leq k\leq K}\{1,|x_k^0|\}$,
$$|x-v_l t-\a_l|\geq |x_1 - v_{l,1}t| -|\a_l|
\geq A_0t - |\a_l| \geq \frac 12 A_0t,\ \ 1\leq l\leq K.  $$
Using the exponential decay of the ground state
we thus obtain
\be\ba
\bigg|\frac{d}{dt}I_k (t) \bigg|
  &\leq \frac{C}{t}(\|\ve(t)\|^2_{H^1(\Omega_k)}+\| \wt R(t)\|^2_{H^1(\Omega_k)})
\leq \frac{C}{t}(\|\ve(t)\|_{H^1}^2+e^{-\delta t}),
\ea\ee
which yields \eqref{dtIk-cri}.

Concerning the local momentum, straightforward computations show that
\begin{align} \label{dt-Mk}
\frac{d}{dt}{\rm Im}\int \partial_{x_1}u\bar{u}\varphi_kdx
 =&2\int |\partial_{x_1}u|^2\varphi^\prime_{k}dx
-\half \int |u|^2\varphi^{\prime\prime\prime}_{k}dx
-\frac{p-1}{p+1}\int |u|^{p+1}\varphi^{\prime}_{k}dx
+  {\rm Im}  \int\partial_{x_1}u\bar{u}\partial_t\varphi_kdx \nonumber \\
& -2 {\rm Re} \<\partial_1 u \vf_k, b_*\cdot \na u + c_* u\>
  - {\rm Re} \<u \partial_1 \vf_k, b_* \cdot \na u + c_* u\>,
\end{align}
and for $2\leq j\leq d$,
\begin{align} \label{dt-Mk-2}
\frac{d}{dt}{\rm Im}\int \partial_{x_j}u\bar{u}\varphi_kdx
  =&
2{\rm Re}\int\partial_{x_1} u\partial_{x_j}\bar{u}\varphi^\prime_kdx
+\int\partial_{x_j}u\bar{u}\partial_t\varphi_kdx \nonumber\\
  &-2 {\rm Re} \<\partial_j u \vf_k, b_*\cdot \na u + c_* u\>.
\end{align}

The first line in \eqref{dt-Mk} and \eqref{dt-Mk-2}
can be  bounded similarly as above by, up to a universal constant,
\begin{align} \label{localMk-esti.1}
   \frac 1 t \(\|\ve(t)\|_{H^1}^2 + e^{-\delta t}\).
\end{align}

Regarding the remaining inner products involving lower order perturbations,
as in the proof of \eqref{naphinaQ-esti},
the key fact is that,
since $Q_{w_k}$ is well localized,
$x$ is essentially localized around $|v_k|t$, i.e.,
$|x| \thicksim|v_k|t$.
Hence, taking into account the decay conditions in Assumption $(A1)$
we get
\begin{align} \label{localMk-esti.2}
  & |{\rm Re} \<\partial_1  \wt R(t) \vf_k(t), b_* (t)\cdot \na \ve(t)\>
    + {\rm Re} \< \partial_1 \ve (t)\vf_k(t), b_*(t) \cdot \na \wt  R(t)\>
    + {\rm Re} \<\partial_1 \wt  R(t) \vf_k(t), b_*(t) \cdot \na \wt  R(t)\>|  \nonumber \\
  \leq& C  B_*(t)   (1+ \|\na \ve(t)\|_{L^2}) (\phi(\delta_1 t) + e^{-\delta_2t}) \nonumber \\
  \leq& C B_*(t) (\phi(\delta_1 t) + e^{-\delta_2t})  .
\end{align}
Taking into account
\begin{align}
  |{\rm Re} \<\partial_1 \ve \vf_k, b_* \na \ve\>|
  \leq C B_* \|\na \ve\|_{L^2}^2,
\end{align}
we thus obtain
\begin{align} \label{p1ub*-esti}
   |{\rm Re} \< \partial_1 u (t)\vf_k(t), b_*(t)\cdot \na u(t)\>|
   \leq C  B_*(t) (\phi(\delta_1 t) + \|\na \ve(t)\|_{L^2}^2 +e^{-\delta_2t} ).
\end{align}

Using analogous arguments we obtain
\begin{align} \label{b*c*-Mk-esti}
    & |\< \partial_1 u(t) \vf_k(t), b_*(t)\cdot \na u (t)+ c_*(t) u(t)\>|
      + |\< u(t) \partial_1 \vf_k(t), b_*(t)\cdot \na u (t)+ c_*(t) u(t)\>|   \nonumber \\
    &  + | \<\partial_j u (t)\vf_k(t), b_*(t)\cdot \na u(t) + c_*(t) u(t)\>|  \nonumber \\
    \leq& C  B_*(t) (  \phi(\delta_1 t) + \|\ve(t) \|_{H^1}^2 +e^{-\delta_2t})
\end{align}

Therefore, combining \eqref{localMk-esti.1} and \eqref{b*c*-Mk-esti}  we obtain \eqref{dtMk-cri}.
The proof is complete.
\hfill $\square$

One important outcome of the almost conservation of local mass
is the following control of
the unstable direction ${\rm Re}\<\wt R_k, \ve\>$ in both the critical and subcritical settings.

\begin{corollary} (Control of unstable direction)   \label{Cor-Rkve-cri}
We have that for any $t\in [T^*,T]$,
\begin{align}  \label{Rkve-sub}
\bigg|{\rm Re}\int  \wt R_k(t) \bar{\ve}(t) dx\bigg|
\leq  C \( \int_{t}^{\infty}\frac{1}{s} \|\ve(s)\|_{H^1}^2 ds+ \|\ve(t) \|^2_{L^2}+ e^{-\delta t}\),
\end{align}
where $C,\delta>0$ are  deterministic constants.
\end{corollary}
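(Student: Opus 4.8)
The plan is to extract the unstable direction $\mathrm{Re}\langle \wt R_k,\ve\rangle$ from the almost-conserved local mass $I_k(t)$ and exploit the terminal condition $\ve(T)=0$. First I would insert the geometrical decomposition $u=\wt R+\ve$ into $I_k(t)=\int |u|^2\varphi_k\,dx$ and expand:
\begin{align*}
 I_k(t) = \int |\wt R|^2\varphi_k\,dx
          + 2\,\mathrm{Re}\int \wt R\,\bar\ve\,\varphi_k\,dx
          + \int |\ve|^2\varphi_k\,dx.
\end{align*}
Since $\varphi_k$ is a partition-of-unity cutoff adapted to the velocities $v_k$, and $\wt R_k$ is exponentially localized around $v_k t+\alpha_k$, for $t$ large one has $\varphi_k\wt R_k = \wt R_k + \mathcal{O}(e^{-\delta t})$ in $H^1$, while $\varphi_k\wt R_j = \mathcal{O}(e^{-\delta t})$ for $j\neq k$ (this uses the decoupling of solitons with distinct velocities, Lemma \ref{Lem-decoup}, together with \eqref{B*-wk-ak-deter-bdd} to keep the relevant constants deterministic). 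Hence
\begin{align*}
 2\,\mathrm{Re}\int \wt R_k\,\bar\ve\,dx
 = I_k(t) - \int|\wt R|^2\varphi_k\,dx
   - \int|\ve|^2\varphi_k\,dx + \mathcal{O}(e^{-\delta t}\|\ve\|_{L^2}),
\end{align*}
and the second term on the right is, up to $\mathcal{O}(e^{-\delta t})$, the constant $\|Q_{w_k^0}\|_{L^2}^2$ (in the critical case one picks up an extra $\mathcal{O}(|w_k-w_k^0|)$ which is controlled by $Mod$ and hence ultimately by $\|\ve\|_{H^1}+B_*\phi+e^{-\delta t}$ via Proposition \ref{Prop-Mod-cri}). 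So everything reduces to estimating the variation of $I_k(t)-(\text{const})$; the obvious move is to integrate the differential bound \eqref{dtIk-cri} from $t$ to $T$.

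Next I would use the terminal data: at $t=T$ we have $\ve(T)=0$ and $\mathcal P_k(T)=(x_k^0,\theta_k^0,w_k^0)$, so $\mathrm{Re}\int\wt R_k(T)\bar\ve(T)\,dx=0$ and $I_k(T)=\|Q\|_{L^2}^2+\mathcal{O}(e^{-\delta T})$. Therefore the "constant" piece matches at the endpoint, and by the fundamental theorem of calculus,
\begin{align*}
 \Big|\,2\,\mathrm{Re}\int \wt R_k(t)\bar\ve(t)\,dx\,\Big|
 \leq \int_t^T \Big|\frac{d}{ds}I_k(s)\Big|\,ds
      + \|\ve(t)\|_{L^2}^2 + C e^{-\delta t}
 \leq C\int_t^T \frac{1}{s}\big(\|\ve(s)\|_{H^1}^2 + e^{-\delta s}\big)\,ds
      + \|\ve(t)\|_{L^2}^2 + Ce^{-\delta t},
\end{align*}
where \eqref{dtIk-cri} was used in the last step. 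Since $\int_t^T s^{-1}e^{-\delta s}\,ds \leq Ce^{-\delta t}$ and $\int_t^T \leq \int_t^\infty$, this yields exactly \eqref{Rkve-sub}. The same computation works verbatim in the subcritical case, where the frequency is frozen at $w_k^0$ and the decomposition $u=\wt R+\ve$ of Proposition \ref{Prop-dec-un-sub} makes the constant term literally $\|Q_{w_k^0}\|_{L^2}^2+\mathcal{O}(e^{-\delta t})$.

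The main obstacle I anticipate is not the calculus step but the bookkeeping in the expansion of $I_k$: one has to argue carefully that $\int|\wt R|^2\varphi_k\,dx$ is genuinely time-independent up to exponentially small and modulation-controlled corrections. In the critical case this requires invoking the modulation bound \eqref{Mod-esti-cri} to convert $|w_k(s)-w_k^0|$, $|\alpha_k(s)-x_k^0|$ into quantities of the size $\|\ve\|_{H^1}+B_*(s)\phi(\delta_1 s)+e^{-\delta_2 s}$, and then absorbing the resulting $\int_t^T (\cdots)\,ds$ term; checking that $\int_t^\infty B_*(s)\phi(\delta_1 s)\,ds$ and $\int_t^\infty \|\ve(s)\|_{H^1}\,ds$ are dominated by the right-hand side of \eqref{Rkve-sub} (or, more cleanly, that one only needs the weaker bound with $\int_t^\infty s^{-1}\|\ve\|_{H^1}^2$ because the cross terms from $\varphi_k$ already carry a factor $1/t$ from \eqref{de-phi}) is where the argument needs the most care. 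Everything else is a direct consequence of Proposition \ref{Prop-Mass-mom-cri}, the soliton decoupling lemma, and the terminal condition.
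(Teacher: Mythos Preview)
Your approach is the same as the paper's: expand $I_k(t)$ via the decomposition $u=\wt R+\ve$, use soliton decoupling to reduce $\int|\wt R|^2\varphi_k\,dx$ to $\|\wt R_k(t)\|_{L^2}^2+\mathcal{O}(e^{-\delta t})$ and $\mathrm{Re}\int\wt R\bar\ve\,\varphi_k\,dx$ to $\mathrm{Re}\int\wt R_k\bar\ve\,dx+\mathcal{O}(e^{-\delta t}\|\ve\|_{L^2})$, compare with the terminal value $I_k(T)$, and integrate \eqref{dtIk-cri}.

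The one point where you make the argument harder than it is concerns your ``main obstacle'' in the critical case. You anticipate a correction $\mathcal{O}(|w_k(t)-w_k^0|)$ in $\int|\wt R|^2\varphi_k\,dx$ that would then have to be controlled via \eqref{Mod-esti-cri}, producing integrals like $\int_t^T\|\ve(s)\|_{H^1}\,ds$ and $\int_t^T B_*(s)\phi(\delta_1 s)\,ds$ which are \emph{not} dominated by the right-hand side of \eqref{Rkve-sub}. The paper avoids this entirely by noting that in the critical case $p=1+\tfrac{4}{d}$ the $L^2$ norm is scaling-invariant, so $\|\wt R_k(t)\|_{L^2}^2=\|Q_{w_k(t)}\|_{L^2}^2=\|Q\|_{L^2}^2$ \emph{exactly}, independently of $w_k(t)$; translation invariance already kills any $\alpha_k$-dependence. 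Hence $\|\wt R_k(t)\|_{L^2}^2=\|\wt R_k(T)\|_{L^2}^2$ identically in both the critical and subcritical settings (see \eqref{Rkt-RkT}), and the only time-variation left in $\int|\wt R|^2\varphi_k\,dx$ is the exponential tail. Your anticipated obstacle therefore does not arise, and no appeal to the modulation estimate is needed at this step.
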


{\bf Proof.}
Using the decomposition \eqref{geo-dec-cri} and \eqref{geo-dec-sub},
respectively,
in the critical and subcritical case,
we expand
\begin{align} \label{Ik-def-sub}
I_k =\int| \wt R|^2\varphi_k dx+2{\rm Re}\int  \wt R\bar{\ve}\varphi_kdx+\int|\ve|^2\varphi_k dx.
\end{align}

Note that,  by the decoupling Lemma \ref{Lem-decoup},
\begin{align} \label{Rvfk-esti.1}
   \int | \wt R|^2 \vf_k dx
     = \int | \wt R_k|^2 \vf_k dx
     + \sum\limits_{j\not = k} \int | \wt R_j|^2 \vf_k dx
     + \calo(e^{-\delta t}).
\end{align}
Since $\a_k$ is uniformly bounded, $|\a_k|\leq 2|x_k^0|$,
and on the support of $\vf_k$,
$|x-v_jt|\geq A_0 t$, $j\not =k$,
we infer that
for $t$ large enough
\begin{align}
   |x-v_j t - \a_j| \geq A_0t - |\a_j| \geq \frac 12 A_0 t,\ \ j\not =k,
\end{align}
which yields that
\begin{align} \label{Rvfk-esti.2}
   \int| \wt R_j|^2 \vf_k dx
   \leq C \int\limits_{|x-v_j t - \a_j| \geq \frac 12 A_0t }
          Q_{w_j}^2(x-v_jt-\a_j) dx
   \leq C e^{-\delta t}, \ \ j\not = k.
\end{align}
Moreover, since on the support of $1-\vf_k$,
$|x-v_kt|\geq A_0t$,
and so for $t$ very large
it holds that
$|x-v_kt - \a_k| \geq \frac 12 A_0 t$,
we   have
\begin{align} \label{Rvfk-esti.3}
   \int | \wt R_k|^2 \vf_k dx
   = \int | \wt R_k|^2 dx + \calo(e^{-\delta t}).
\end{align}
Thus, we derive from \eqref{Rvfk-esti.1}, \eqref{Rvfk-esti.2} and \eqref{Rvfk-esti.3} that
\begin{align} \label{intR2vfk}
\int| \wt R|^2\vf_k dx=\| \wt R_k\|_{L^2}^2+\calo(e^{-\delta t}).
\end{align}
Similarly,
\begin{align} \label{intRve}
{\rm Re}\int  \wt R\bar{\ve}\varphi_kdx={\rm Re}\int  \wt R_k\bar{\ve}dx+\calo(e^{-\delta t}\|\ve\|_{L^2}).
\end{align}
Thus, we conclude from  \eqref{Ik-def-sub}, \eqref{intR2vfk} and \eqref{intRve} that
\begin{align} \label{Ik-esti-sub}
I_k(t)=\| \wt R_k\|_{L^2}^2+2{\rm Re}\int  \wt R_k\bar{\ve}dx+\int|\ve|^2\varphi_kdx+\calo\(e^{-\delta t} \).
\end{align}

In particular,
letting $t=T$ and using $\ve(T)=0$ we get
\begin{align} \label{IkT-RkT}
I_k(T)=\| \wt R_k(T)\|_{L^2}^2+\calo(e^{-\delta T}).
\end{align}
Note that, in both the  critical and subcritical cases,
\begin{align}  \label{Rkt-RkT}
\| \wt R_k(t)\|_{L^2}=\| \wt R_k(T)\|_{L^2}.
\end{align}
In fact, via the scaling invariance,
one has $\| \wt R_k(t)\|_{L^2}=\| \wt R_k(T)\|_{L^2}=\|Q\|_{L^2}$ in the critical case.
While in the subcritical case,
since $w_k\equiv w_k^0$,
one has  $\| \wt R_k(t)\|_{L^2}=\| \wt R_k(T)\|_{L^2}=(w_k^{0})^{\frac{d}{2}-\frac{2}{p-1}}\|Q\|_{L^2}$.

Therefore, plugging \eqref{IkT-RkT} and \eqref{Rkt-RkT} into \eqref{Ik-esti-sub}
we then obtain
\be
{\rm Re}\int  \wt R_k(t) \bar{\ve}(t) dx
 =\half(I_k(t)-I_k(T)) - \half\int|\ve(t) |^2\varphi_kdx+\calo\(e^{-\delta t}\),
\ee
which, via Proposition \ref{Prop-Mass-mom-cri}, yields that
\begin{align}
\bigg|{\rm Re}\int \wt  R_k(t) \bar{\ve}(t)  dx\bigg|
&\leq \half\int_{t}^{T} \bigg|\frac{dI_k}{ds}\bigg| ds+\half\int|\ve(t) |^2dx+ C\(e^{-\delta t}\)   \nonumber \\
&\leq C\int_{t}^{T}\frac{1}{s}(\|\ve\|_{H^1}^2+e^{-\delta s})ds
       +C \(\|\ve(t) \|^2_{L^2}+ e^{-\delta t}\),
\end{align}
thereby proving \eqref{Rkve-sub} by letting $T$ tend to infinity.
\hfill $\square$

\subsection{Energy}

Proposition \ref{Prop-energy-cri} below
is concerned with the control of energy
defined by
\begin{align} \label{energy-def}
   E(u): = \frac 12 \|\na u\|_{L^2}^2 - \frac{1}{p+1}\|u\|_{L^{p+1}}^{p+1},
\end{align}
where $u$ is the solution to equation \eqref{equa-un-tn}.

Again the energy is no longer conserved
due to the  presence of lower order perturbations (or noise).
The variation control of the energy is estimated in the following proposition.

\begin{proposition} (Control of energy)  \label{Prop-energy-cri}
There exist deterministic constants $C, \delta_1, \delta_2>0$
such that
\begin{align} \label{dt-energy-sub}
\bigg|\frac{d}{dt}E(u(t))\bigg|\leq C B_*(t) (\phi(\delta_1 t)+\|\ve(t) \|_{H^1}^2 + e^{-\delta_2t}), \ \ \forall t\in [T^*,T].
\end{align}
\end{proposition}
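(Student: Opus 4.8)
Since the unperturbed energy is conserved along NLS, the time derivative of $E(u(t))$ should be produced entirely by the lower-order perturbation $b_*\cdot\na u+c_*u$. First I would differentiate and substitute equation \eqref{equa-un-tn}: from
\begin{align*}
\frac{d}{dt}E(u(t))=-{\rm Re}\langle\partial_tu,\Delta u+|u|^{p-1}u\rangle,
\qquad
\Delta u+|u|^{p-1}u=-i\partial_tu-b_*\cdot\na u-c_*u,
\end{align*}
together with ${\rm Re}\langle\partial_tu,-i\partial_tu\rangle=0$, one gets $\frac{d}{dt}E(u(t))={\rm Re}\langle\partial_tu,\,b_*\cdot\na u+c_*u\rangle$. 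Substituting $\partial_tu=i(\Delta u+|u|^{p-1}u+b_*\cdot\na u+c_*u)$ once more and using ${\rm Re}\langle i(b_*\cdot\na u+c_*u),\,b_*\cdot\na u+c_*u\rangle=0$ yields the working identity, free of $\partial_tu$,
\begin{align*}
\frac{d}{dt}E(u(t))={\rm Re}\langle i\Delta u,\,b_*\cdot\na u+c_*u\rangle+{\rm Re}\langle i|u|^{p-1}u,\,b_*\cdot\na u+c_*u\rangle .
\end{align*}

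Next I would remove the top-order derivatives by integration by parts, exploiting the structure of the coefficients. Write $W_*=iw_*$ with $w_*:={\rm Im}\,W_*$ real, so that $b_*=2\na W_*=i\tilde b_*$ with $\tilde b_*:=2\na w_*$ real and curl-free, while $c_*=-\sum_j(\partial_jw_*)^2+i\Delta w_*$ has real part $O(B_*^2)$ and imaginary part $\Delta w_*=O(B_*)$. A direct computation gives
\begin{align*}
{\rm Re}\langle i\Delta u,\,b_*\cdot\na u\rangle
=-{\rm Re}\sum_{j,m}\int(\partial_j\tilde b_{*,m})\,\partial_ju\,\overline{\partial_mu}\,dx
+\tfrac12\int(\na\cdot\tilde b_*)\,|\na u|^2\,dx,
\end{align*}
and ${\rm Re}\langle i|u|^{p-1}u,\,b_*\cdot\na u\rangle=-\tfrac1{p+1}\int(\na\cdot\tilde b_*)\,|u|^{p+1}\,dx$, while ${\rm Re}\langle i\Delta u,\,c_*u\rangle$ and ${\rm Re}\langle i|u|^{p-1}u,\,c_*u\rangle$ reduce, after one integration by parts, to $\int$-terms against $c_*,\na c_*$ and $\Delta w_*$ carrying at most first-order derivatives of $u$. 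Thus the whole right-hand side becomes a finite sum of integrals of the schematic form $\int\kappa(t,x)\,D^{\le1}u\,\overline{D^{\le1}u}\,dx$ and $\int\kappa(t,x)\,|u|^{p+1}\,dx$, where $\kappa$ ranges over $b_*,\na b_*,c_*,\na c_*$; by \eqref{b*-def}, \eqref{c*-def} and the decay assumptions \eqref{phil-exp-decay}--\eqref{phil-poly-decay} in $(A1)$, each such $\kappa$ satisfies $\|\kappa(t)\|_{L^\infty}\le CB_*(t)$ and $|\kappa(t,x)|\le CB_*(t)\phi(x)$.

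Finally I would insert the geometrical decomposition $u=\wt R+\ve$ and estimate each piece exactly as in the proof of Proposition \ref{Prop-Mod-cri} (cf. \eqref{naphinaQ-esti}--\eqref{c*naRk-bdd}). For the pure $\wt R$-terms one uses that each $\wt R_k$ is localized near $x\approx v_kt$, so on its essential support $\phi(x)\le C\phi(\delta_1t)$ for $t$ large; since $\|\wt R_k\|_{H^1}+\|\wt R_k\|_{L^{p+1}}\le C$, these contribute $\le CB_*(t)(\phi(\delta_1t)+e^{-\delta_2t})$, the exponential part coming from off-support tails and from the off-diagonal terms handled by the decoupling lemma (Lemma \ref{Lem-decoup}). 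The pure $\ve$-terms are bounded by $\|\kappa(t)\|_{L^\infty}$ times $\|\ve(t)\|_{H^1}^2$ (resp.\ $\|\ve(t)\|_{L^{p+1}}^{p+1}\le\|\ve(t)\|_{H^1}^{p+1}\le\|\ve(t)\|_{H^1}^2$, using \eqref{ve-T*-1} and $p+1>2$), hence by $CB_*(t)\|\ve(t)\|_{H^1}^2$. The cross $\wt R$--$\ve$ terms are bounded, using the soliton localization on one factor, by $CB_*(t)\phi(\delta_1t)\|\ve(t)\|_{H^1}$ plus exponentially small terms; Young's inequality together with $\phi,e^{-\delta t}\le1$ (for $t$ large) then absorbs these into $CB_*(t)(\phi(\delta_1t)+\|\ve(t)\|_{H^1}^2+e^{-\delta_2t})$. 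Summing the finitely many contributions gives \eqref{dt-energy-sub}; the subcritical case is identical.

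The main obstacle I anticipate is the top-order term ${\rm Re}\langle i\Delta u,\,b_*\cdot\na u\rangle$: estimated crudely it is only of size $\|u\|_{H^1}^2\gtrsim\|\wt R\|_{H^1}^2=O(1)$, far worse than the required $O(B_*(t)\phi(\delta_1t))$. The remedy is the integration-by-parts identity above, which crucially uses that $b_*$ is purely imaginary and a gradient, so the genuinely second-order part collapses to $\tfrac12\int(\na\cdot\tilde b_*)|\na u|^2$ with the small, spatially decaying weight $\na\cdot\tilde b_*=2\Delta w_*$. Care is needed to organize all integrations by parts so that $\ve$ is never differentiated twice (this is legitimate since $u(T)=R(T)$ is Schwartz, hence $u$, and therefore $\ve=u-\wt R$, remain smooth on $[T^*,T]$); beyond that point the task is careful bookkeeping of the many resulting terms rather than a further conceptual difficulty.
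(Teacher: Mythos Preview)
Your proposal is correct and follows essentially the same approach as the paper. The paper computes the identity
\[
\frac{d}{dt}E(u)=-{\rm Im}\int(\ol{b_*}\cdot\nabla\ol{u}+\ol{c_*}\,\ol{u})(\Delta u+|u|^{p-1}u)\,dx,
\]
then carries the integrations by parts all the way through to obtain the explicit expression \eqref{dtE} (Hessian, $\Delta^2\phi_l$, $\Delta\phi_l|u|^{p+1}$ and gradient-of-square terms), and finally inserts $u=\wt R+\ve$ and estimates ``as in the proof of \eqref{naphinaQ-esti}'' to reach \eqref{e}; your abstract description of the same integrations by parts and the same localization estimates reproduces this, including the cancellation of the $\tfrac12\int(\nabla\cdot\tilde b_*)|\nabla u|^2$ term against its counterpart from ${\rm Re}\langle i\Delta u,c_*u\rangle$.
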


{\bf Proof.}
Using (\ref{equa-un-tn}) and the integration-by-parts formula we compute
\begin{align} \label{dtE}
\frac{d}{dt}E (u)
=&-{\rm Im}\int(\ol{b_*}\cdot\nabla \ol{u}+ \ol{c_*} \ol{u})(\Delta u+|u|^{p-1} u) dx \nonumber \\
=&2 \sum\limits_{l=1}^N B_{*,l}
    {\rm Re}\int \nabla^2 \phi_l(\nabla u,\nabla \ol{u})dx
    - \frac 12 \sum\limits_{l=1}^N B_{*,l} \int \Delta^2 \phi_l |u|^2dx  \nonumber \\
&  -  \frac{p-1}{p+1} \sum\limits_{l=1}^N B_{*,l}
     \int \Delta \phi_l |u|^{p+1} dx
   - {\rm Im } \int \na \sum\limits_{j=1}^d
     \(   \sum\limits_{l=1}^N \partial_j \phi_l B_{*,l} \)^2 \cdot \na u \ol{u} dx.
\end{align}
Then, using \eqref{geo-dec-cri} and estimating similarly as in
the proof of \eqref{naphinaQ-esti}, we obtain
\begin{align} \label{e}
 \bigg|\frac{d}{dt}E(u(t))\bigg|
 \leq&C B_*(t) ( \phi(\delta_1 t) +\|\ve(t) \|_{H^1}^2+\|\ve(t) \|_{H^1}^{p+1} + e^{-\delta_2t}),
\end{align}
where $C,\delta_1, \delta_2>0$, and thus  \eqref{dt-energy-sub} follows.
\hfill $\square$

\subsection{Lyapunov type functional}

The key ingredient to control the size of remainder is the following Lyapunov type functional
\begin{align}  \label{Wei-def}
G(t):=2E(u(t))+\sum_{k=1}^{K}\left\{\( (w_{k}^0)^{-2}+\frac{|v_k|^2}{4} \)I_k(t) -v_k\cdot M_k(t) \right\}.
\end{align}
Recall that, in the subcritical case, we have  $w_k(t)\equiv w_k^0$
in the geometrical decomposition \eqref{geo-dec-sub}.

The main estimate for $G(t)$ is formulated in Proposition \ref{Prop-W-expan-cri} below.

\begin{proposition} (Expansion of Lyapunov type functional) \label{Prop-W-expan-cri}
Let $1<p\leq 1+\frac 4d$, $d\geq 1$.
Then, for any $t\in [T^*, T]$ we have
\begin{align} \label{fun-g-cri}
G(t)
 =& \sum_{k=1}^{K}(2E(Q_{w_{k}^0})+ (w_{k}^0)^{-2}\|Q_{w_{k}^0}\|_{L^2}^2)
 +H(\varepsilon(t) )
  + \calo(|w_k(t) -w_{k}^0|\ \|\ve(t) \|^2_{H^1}+ e^{-\delta t})  \nonumber \\
  &  +o(\|\ve(t) \|_{H^1}^2)
  +\calo\(\sum_{k=1}^{K}\bigg|(w_{k}^0-w_k(t) ){\rm Re}\int  \wt R_k(t) \bar{\ve}(t) dx\bigg| \).
\end{align}
where $H(\ve)$ contains the quadratic terms of $\ve$, i.e.,
\be\ba
H(\ve)&=\int|\nabla \ve|^2dx
         -\sum_{k=1}^{K} \int \frac{p+1}{2} | \wt R_k|^{p-1} |\ve|^2
         +(p-1)| \wt R_k|^{p-3}[{\rm Re}( \wt R_k\bar{\ve})^2]dx \\
      &\quad+\sum_{k=1}^{K}\left\{\(w_{k}^{-2}+\frac{|v_k|^2}{4}\)
        \int |\ve|^2\varphi_kdx-v_k\cdot {\rm Im}\int\nabla \ve \bar{\ve}\varphi_kdx\right\},
\ea\ee
and the implicit constant and $\delta$ are independent of $\omega$.
\end{proposition}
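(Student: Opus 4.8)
\noindent\emph{Proof strategy.} The plan is to insert the geometrical decomposition $u=\wt R+\ve=\sum_{k=1}^K\wt R_k+\ve$ (either \eqref{geo-dec-cri} or \eqref{geo-dec-sub}) into each of the three functionals $E(u)$, $I_k(u)$, $M_k(u)$ making up $G$ in \eqref{Wei-def}, Taylor expand in powers of $\ve$, and collect the terms of order $0$, $1$, $2$ and $\ge 3$. Throughout, the decoupling Lemma~\ref{Lem-decoup}, together with the localization of $\varphi_k$ and the exponential decay \eqref{Q-decay} of $Q$, will be used to replace every quantity built from $\wt R=\sum_j\wt R_j$ (and cut off by $\varphi_k$) by the corresponding single‑soliton quantity built from $\wt R_k$ alone, at the cost of errors of size $\calo(e^{-\delta t})$, resp. $\calo(e^{-\delta t}\|\ve\|_{H^1})$; in particular the $\na\varphi_k$–boundary contributions coming from integration by parts in $I_k,M_k$ are harmless, since $\na\varphi_k=\calo(1/t)$ is supported where $\wt R$ is exponentially small, exactly as already exploited in the proofs of Proposition~\ref{Prop-Mass-mom-cri} and Corollary~\ref{Cor-Rkve-cri}.

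For the zeroth‑order term I would compute $G(\wt R)$ explicitly. The Galilean identities \eqref{nablaR}, \eqref{lambdaR} give $\|\na\wt R_k\|_{L^2}^2=\|\na Q_{w_k}\|_{L^2}^2+\tfrac{|v_k|^2}{4}\|Q_{w_k}\|_{L^2}^2$, $\|\wt R_k\|_{L^{p+1}}^{p+1}=\|Q_{w_k}\|_{L^{p+1}}^{p+1}$, while \eqref{intR2vfk} and its momentum analogue give $I_k(\wt R)=\|Q_{w_k}\|_{L^2}^2+\calo(e^{-\delta t})$ and $M_k(\wt R)=\tfrac12 v_k\|Q_{w_k}\|_{L^2}^2+\calo(e^{-\delta t})$. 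Plugging these into \eqref{Wei-def}, all $|v_k|^2$– and $v_k\cdot M_k$–contributions cancel and the zeroth‑order term collapses to $\sum_k\bigl(2E(Q_{w_k(t)})+(w_k^0)^{-2}\|Q_{w_k(t)}\|_{L^2}^2\bigr)+\calo(e^{-\delta t})$. In the subcritical case $w_k(t)\equiv w_k^0$ and this is already the announced constant. In the critical case $w_k(t)$ varies, but the Pohozaev/scaling identities force $E(Q_w)\equiv 0$ and $\|Q_w\|_{L^2}=\|Q\|_{L^2}$, so the bracket is independent of $w_k(t)$ and again equals $\sum_k\bigl(2E(Q_{w_k^0})+(w_k^0)^{-2}\|Q_{w_k^0}\|_{L^2}^2\bigr)$. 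This is precisely the mechanism keeping the main part of $G$ time‑independent.

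For the first‑order (linear in $\ve$) term I would use that $\wt R_k$ almost solves the stationary equation: combining the elliptic equation \eqref{equa-Qw} with \eqref{lambdaR} yields $-\Delta\wt R_k-|\wt R_k|^{p-1}\wt R_k=-\bigl(w_k^{-2}(t)+\tfrac{|v_k|^2}{4}\bigr)\wt R_k-iv_k\cdot\na\wt R_k$. After decoupling, the linear parts of $2E$, of $\bigl((w_k^0)^{-2}+\tfrac{|v_k|^2}{4}\bigr)I_k$, and of $-v_k\cdot M_k$ (the last handled by an integration by parts moving $\na\ve$ onto $\wt R_k$) combine so that the terms proportional to $v_k\cdot{\rm Im}\int\na\wt R_k\bar\ve$ cancel between $2E$ and $M_k$, while the coefficient of ${\rm Re}\int\wt R_k\bar\ve$ collapses to $2\bigl((w_k^0)^{-2}-w_k^{-2}(t)\bigr)$. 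Since $w_k$ stays close to $w_k^0$, this is $0$ in the subcritical case and $\calo\bigl(|w_k(t)-w_k^0|\,|{\rm Re}\int\wt R_k(t)\bar\ve(t)\,dx|\bigr)$ in the critical case — exactly the last error term in \eqref{fun-g-cri}. (The orthogonality conditions \eqref{ortho-cond-R-ve-cri} are consistent with, but not needed for, this cancellation; they enter later, for the coercivity of $H$.)

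Finally, the second‑order term is assembled from $\|\na\ve\|_{L^2}^2$ (from $E$), the localized Weinstein‑type potential terms produced by the second‑order expansion of the nonlinearity after decoupling $|\wt R|^{p-1}\approx\sum_k|\wt R_k|^{p-1}$, and $\sum_k\bigl((w_k^0)^{-2}+\tfrac{|v_k|^2}{4}\bigr)\int|\ve|^2\varphi_k-v_k\cdot{\rm Im}\int\na\ve\,\bar\ve\,\varphi_k$ from the quadratic parts of $I_k,M_k$; replacing $(w_k^0)^{-2}$ by $w_k^{-2}(t)$ in the $\int|\ve|^2\varphi_k$ coefficient produces precisely the $\calo\bigl(|w_k(t)-w_k^0|\,\|\ve(t)\|_{H^1}^2\bigr)$ error and leaves $H(\ve)$ as stated. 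The remaining cubic‑and‑higher part of the nonlinearity is $o(\|\ve\|_{H^1}^2)$ by Gagliardo–Nirenberg/Sobolev together with the a priori smallness $\|\ve\|_{H^1}\le 1$ (the exponents being admissible for $1<p\le 1+\tfrac4d$). The main obstacle I anticipate is purely the bookkeeping: checking that every decoupling cross‑term and every $\na\varphi_k$–boundary term is genuinely $\calo(e^{-\delta t})$, resp. $\calo(e^{-\delta t}\|\ve\|_{H^1})$, with $\omega$‑independent constants, and — in the critical case — carefully pinning down the complete vanishing of the residual $w_k(t)$‑dependence in the zeroth‑order term through the Pohozaev identity.
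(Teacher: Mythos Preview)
Your proposal is correct and follows essentially the same route as the paper: insert the geometrical decomposition into $E$, $I_k$, $M_k$, decouple via Lemma~\ref{Lem-decoup} and the localization of $\varphi_k$, then read off the orders $0$, $1$, $2$, and $\ge 3$ exactly as you describe, with the Pohozaev identity killing the $w_k(t)$--dependence of the zeroth--order term in the critical case and the elliptic equation \eqref{equa-Qw} producing the linear residue $2((w_k^0)^{-2}-w_k^{-2}(t)){\rm Re}\int\wt R_k\bar\ve$. The only place you might want to be slightly more careful than ``Gagliardo--Nirenberg plus smallness'' is the cubic remainder when $1<p<2$ (no third derivative of $|z|^{p+1}$): the paper handles this by writing the second--order Taylor remainder as an integral and applying dominated convergence in $L^\rho$ to get $o(\|\ve\|_{H^1}^2)$, but your approach works too once you note the remainder is $\calo(|\ve|^{p+1})$ and $p+1>2$.
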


\begin{remark}
$(i)$.
It should be mentioned that,
the main part in \eqref{fun-g-cri} is independent of time.
This fact is obvious in the subcritical case
because the parameter $\omega_k \equiv w_k^0$ is independent of time.
While, in the critical case
it relies on the scaling invariance and Pohozaev identity (see \eqref{identity} below).

$(ii)$.
Another important property is the coercivity of
the quadratic term $H(\ve)$, i.e.,
for some $C>0$,
\begin{align}  \label{H-coercive}
H(\ve)\geq C\|\ve\|_{H^1}^2-\frac{1}{C}\sum_{k=1}^{K}\left({\rm Re}\int  \wt R_k\bar{\ve}dx\right)^2.
\end{align}
The coercivity in particular enables us to control the remainder $\ve$
in the geometrical decomposition.
\end{remark}

{\bf Proof of Proposition \ref{Prop-W-expan-cri}.}
First, using \eqref{geo-dec-cri} (or \eqref{geo-dec-sub}) and Lemma \ref{Lem-decoup}
we expand the kinetic energy
\begin{align}  \label{nau-expa-cri}
\int |\nabla u|^2dx&=\int |\nabla  \wt R|^2dx+\int |\nabla \ve|^2dx-2{\rm Re}\int\Delta  \wt R\bar{\ve}dx  \nonumber  \\
=&\sum_{k=1}^{K}\(\int |\nabla  \wt R_k|^2dx+\int |\nabla \ve|^2dx-2{\rm Re}\int\Delta  \wt R_k\bar{\ve}dx\)+\calo(e^{-\delta t})     \\
=&\sum_{k=1}^{K}\(\int |\nabla Q_{w_{k}}|^2dx
   +\frac{|v_k|^2}{4}\int|Q_{w_{k}}|^2dx
   +\int |\nabla \ve|^2dx-2{\rm Re}\int\Delta  \wt R_k\bar{\ve}dx\)+\calo(e^{-\delta t}). \nonumber
\end{align}
Moreover, for the potential energy we expand
\begin{align}   \label{up-expan-cri.0}
    \int |u|^{p+1}dx
    =&   \int | \wt R|^{p+1}dx
        +(p+1) {\rm Re}\int | \wt R|^{p-1}  \wt R\bar{\ve}dx  \nonumber \\
     &  + \frac{p+1}{2} \int \frac{p+1}{2} | \wt R|^{p-1} |\ve|^2
        +(p-1)| \wt R|^{p-3}[{\rm Re}( \wt R\bar{\ve})^2]dx +\calo(Er),
\end{align}
where the error term
\begin{align}
  Er:= \int\sum\limits_{z^*, \wt{z}^* \in \{z,\ol{z}\}}
        \int_0^1 r \int_0^1 \(\partial_{z^* \wt z^*} g( \wt R+sr\ve) -  \partial_{z^* \wt z^*} g( \wt R)\) \ve^2 dr dsdx,
\end{align}
and $g:= |u|^{p+1}$.
We note that,
since $1<p \leq 1+ \frac 4d$,
we may take
$\rho \in (1,\9)$ such that
$\frac 1 \rho = (\frac 12 - \frac 1d)(p-1)$ if $d\geq 3$,
and $\rho = \frac{p-1}{8}$ if $d=1,2$.
Then,  $2\leq \rho (p-1) \leq 2+\frac{4}{d-2}$
and $2\leq 2\rho' \leq 2+\frac{4}{d-2}$ if $d\geq 3$,
$2\leq 2\rho'<\9$ and $2\leq \rho(p-1)<\9$ if $d=1,2$.
By Sobolev's embedding $H^1 \hookrightarrow L^{2\rho'}$,
\begin{align}
  |Er| \leq& C  \|\ve\|_{2\rho'}^2 \sum\limits_{z^*, \wt{z}^* \in \{z,\ol{z}\}}
            \bigg\| \int_0^1 r \int_0^1 \(\partial_{z^* \wt z^*} g( \wt R+sr\ve) -  \partial_{z^* \wt z^*} g( \wt R)\) dr ds \bigg\|_{L^\rho}
             \nonumber \\
      \leq& C  \|\ve\|_{H^1}^2   \sum\limits_{z^*, \wt{z}^* \in \{z,\ol{z}\}}
             \bigg\| \int_0^1 r \int_0^1 \(\partial_{z^* \wt z^*} g( \wt R+sr\ve) -  \partial_{z^* \wt z^*} g( \wt R)\) dr ds \bigg\|_{L^\rho}.
\end{align}
Moreover, since $\|\ve(t)\|_{H^1}\to 0$ as $t\to T$,
we infer that for any sequence $\{t_n\}$, $t_n\to T$,
there exists a subsequence (still denoted by $\{n\}$)
such that
$\ve(t_n) \to 0$, $dx$-a.e..
By Sobolev's embedding $H^1 \hookrightarrow L^{\rho (p-1)}$,
\begin{align}
   |\partial_{z^* \wt z^*} g( \wt R(t_n)+sr\ve(t_n)) -  \partial_{z^* \wt z^*} g( \wt R(t_n))|
   \leq C (| \wt R(t_n)|^{p-1} + |\ve(t_n)|^{p-1}) \in L^\rho.
\end{align}
Hence, by the dominated convergence theorem,
\begin{align}
   \bigg\| \int_0^1 r \int_0^1 \(\partial_{z^* \wt z^*} g( \wt R(t_n)+sr\ve(t_n)) -  \partial_{z^* \wt z^*} g( \wt R(t_n))\) dr ds \bigg\|_{L^\rho}
   \to 0, \ \ as\ t_n\to T.
\end{align}
Since $\{t_n\}$ is any arbitrary sequence converging to $T$,
we obtain that the above convergence is valid for any $t\to T$,
and thus
\begin{align} \label{Er-o1}
    |Er| = o(\|\ve\|_{H^1}^2).
\end{align}
Moreover, we claim that
\begin{align} \label{4}
&\frac{2}{p+1} \int | \wt R|^{p+1}dx
       + 2 {\rm Re}\int | \wt R|^{p-1}  \wt R\bar{\ve}dx    +\int \frac{p+1}{2} | \wt R|^{p-1} |\ve|^2
  +(p-1)| \wt R|^{p-3}[{\rm Re}( \wt R\bar{\ve})^2]dx  \notag \\
&=\frac{2}{p+1} \sum_{k=1}^{K}\int | \wt R_k|^{p+1}dx
       + 2 \sum_{k=1}^{K}{\rm Re}\int | \wt R_k|^{p-1}  \wt R_k\bar{\ve}dx  \notag  \\
& +\sum_{k=1}^{K} \int \frac{p+1}{2} | \wt R_k|^{p-1} |\ve|^2
  +(p-1)| \wt R_k|^{p-3}[{\rm Re}( \wt R_k\bar{\ve})^2]dx +\calo(e^{-\delta t}).
\end{align}
The proof of \eqref{4} is postponed in the Appendix. Thus, plugging \eqref{Er-o1} and \eqref{4} into \eqref{up-expan-cri.0}
we then obtain
\begin{align}    \label{up-expan-cri}
    \frac{2}{p+1} \int |u|^{p+1}dx
    =&  \frac{2}{p+1} \sum_{k=1}^{K}\int | \wt R_k|^{p+1}dx
       + 2 \sum_{k=1}^{K}{\rm Re}\int | \wt R_k|^{p-1}  \wt R_k\bar{\ve}dx    \nonumber    \\
& +\sum_{k=1}^{K} \int \frac{p+1}{2} | \wt R_k|^{p-1} |\ve|^2
  +(p-1)| \wt R_k|^{p-3}[{\rm Re}( \wt R_k\bar{\ve})^2]dx +o(\|\ve\|^2_{H^1}) +\calo(e^{-\delta t}).
\end{align}

Thus, combining \eqref{nau-expa-cri} and \eqref{up-expan-cri} together
we obtain
\begin{align}  \label{E-expan-cri}
2E(u)&=\sum_{k=1}^{K}\left(2E(Q_{w_{k}})+\frac{|v_k|^2}{4}\|Q_{w_{k}}\|_{L^2}^2\right)
-\sum_{k=1}^{K}2{\rm Re}\int(\Delta  \wt R_k + | \wt R_k|^{p-1} \wt R_k)\bar{\ve}dx   \nonumber     \\
&\quad+\int |\nabla \ve|^2 dx
   -\sum_{k=1}^{K} \int \frac{p+1}{2} | \wt R_k|^{p-1}|\ve|^2
    +(p-1)| \wt R_k|^{p-3}[{\rm Re}( \wt R_k\bar{\ve})^2] dx+o(\|\ve\|^2_{H^1})+\calo(e^{-\delta t}).
\end{align}

We also see from \eqref{Ik-esti-sub} that
\begin{align}  \label{wkIk-expan-cri}
\( (w_{k}^0)^{-2}+\frac{|v_k|^2}{4}\)I_k
=&\( (w_{k}^0)^{-2}+\frac{|v_k|^2}{4}\)\|Q_{w_{k}}\|_{L^2}^2
  +\(2 (w_{k}^0)^{-2}+\frac{|v_k|^2}{2}\){\rm Re}\int  \wt R_k\bar{\ve}dx \nonumber \\
 &+\( w_{k}^{-2}+\frac{|v_k|^2}{4}\)\int|\ve|^2\varphi_kdx
 +((w_{k}^0)^{-2}-w_{k}^{-2})\int|\ve|^2\varphi_kdx  +\calo(e^{-\delta t}).
\end{align}

Regarding the local momentum, we expand
\begin{align} \label{Mk-expan-cri}
M_k&={\rm Im}\int\nabla  \wt R_k\ol{ \wt R}_k dx+2{\rm Im}\int \nabla  \wt R_k\bar {\ve}dx
+{\rm Im}\int \nabla \ve\bar{\ve}\varphi_kdx+\calo(e^{-\delta t}) \nonumber  \\
&=\frac{v_k}{2}\int|Q_{w_{k}}|^2dx+2{\rm Im}\int \nabla  \wt R_k\bar {\ve}dx
+{\rm Im}\int \nabla \ve\bar{\ve}\varphi_kdx+\calo(e^{-\delta t}),
\end{align}
which yields that
\be
v_k\cdot M_k=\frac{|v_k|^2}{2}\|Q_{w_{k}}\|_{L^2}^2+2v_k\cdot{\rm Im}\int \nabla \wt  R_k\bar {\ve}dx
+v_k\cdot{\rm Im}\int \nabla \ve\bar{\ve}\varphi_kdx+\calo(e^{-\delta t}).
\ee

Therefore, collecting \eqref{E-expan-cri}, \eqref{wkIk-expan-cri} and \eqref{Mk-expan-cri} altogether
we conclude that
\begin{align} \label{G-expan-cri.0}
G(t)=&\sum_{k=1}^{K}(2E(Q_{w_{k}(t)})+ (w_{k}^0)^{-2}\|Q_{w_{k}(t)}\|_{L^2}^2)  \nonumber  \\
   & -2\sum_{k=1}^{K}{\rm Re}\int (\Delta  \wt R_k(t) - (w_{k}^0)^{-2} \wt  R_k(t) +| \wt R_k(t)|^{p-1} \wt R_k(t))\bar{\ve}(t)dx
+\sum_{k=1}^{K}\frac{|v_k|^2}{2}{\rm Re}\int  \wt R_k(t)\bar{\ve}(t)dx  \\
   & - 2\sum_{k=1}^{K}v_k\cdot{\rm Im}\int \nabla  \wt R_k(t)\bar {\ve}(t)dx
+H(\varepsilon(t))
+\sum_{k=1}^{K} ((w_{k}^0)^{-2}-w_{k}^{-2}(t))\int|\ve(t)|^2\varphi_k(t)dx   \nonumber \\
   & +o(\|\ve(t)\|_{H^1}^2)+\calo(e^{-\delta t}).  \nonumber
\end{align}

Now, let us estimate the R.H.S. of \eqref{G-expan-cri.0}.
For the first term, we claim that
\be\label{identity}
2E(Q_{w_{k}(t)})+ (w_k^0)^{-2}\|Q_{w_{k}(t)}\|_{L^2}^2=
2E(Q_{w_{k}^0})+ (w_k^0)^{-2}\|Q_{w_{k}^0}\|_{L^2}^2.
\ee

Note that,
the R.H.S. above only depends on $w_k^0$ which is independent of time.
Hence, the identity \eqref{identity} shows that
the modulation parameter $w_k(t)$ (depending on time in the critical case) indeed does not affect the
main part of the Lyapunov  type functional.

This identity is obvious in the subcritical case as $w_k(t)\equiv w_k^0$.
Concerning the critical case,
the scaling invariance in the  $L^2$-critical case
yields that
\begin{align}  \label{EQk-expan}
2E(Q_{w_{k}})+ (w_k^0)^{-2}\|Q_{w_{k}}\|_{L^2}^2-
2E(Q_{w_{k}^0})-(w_k^0)^{-2}\|Q_{w_{k}^0}\|_{L^2}^2=2(w_k^{-2}-(w_k^0)^{-2})E(Q).
\end{align}
Then, by the key Pohozaev identity
\begin{align} \label{naQL2-QLp}
     (d-2) \|\na Q\|_{L^2}^2+d\|Q\|_{L^2}^2 = \frac{2d}{p+1} \|Q\|_{L^{p+1}}^{p+1},
\end{align}
we obtain
\begin{align} \label{EQ-0}
     E(Q)=0,
\end{align}
which along with \eqref{EQk-expan} yields \eqref{identity}, as claimed.

For the linear terms of $\ve$
on the R.H.S. of \eqref{G-expan-cri.0},
by \eqref{equa-Qw}, \eqref{nablaR} and \eqref{lambdaR},

\begin{align}
\Delta  \wt R_k- (w_{k}^0)^{-2} \wt R_k+| \wt R_k|^{p} \wt R_k
=\(w_k^{-2}- (w_{k}^0)^{-2} \)  \wt R_k
  + i v_k\cdot \na \wt  R_k + \frac 14 |v_k|^2  \wt R_k,
\end{align}
which yields the identity
\begin{align}    \label{G-expan-linear-esti}
&-2 {\rm Re}\int (\Delta  \wt R_k- (w_k^0)^{-2} \wt R_k+| \wt R_k|^{p} \wt R_k)\bar{\ve}dx
+\frac{|v_k|^2}{2}{\rm Re}\int  \wt R_k\bar{\ve}dx
  - 2v_k\cdot{\rm Im}\int \nabla  \wt R_k\bar {\ve}dx  \nonumber  \\
=& 2 ( (w_{k}^0)^{-2} -w_k^{-2}){\rm Re}\int  \wt R_k\bar{\ve}dx.
\end{align}

Therefore,
plugging \eqref{identity} and \eqref{G-expan-linear-esti}  into \eqref{G-expan-cri.0}
we obtain \eqref{fun-g-cri} and finish the proof.
\hfill $\square$

As a consequence, we have the crucial coercivity type control of the remainder.
\begin{proposition} (Coercivity type control of remainder) \label{Prop-veH1-cri}
Let $1<p\leq 1+\frac4d$, $d\geq 1$.
Then, there exist
deterministic constants $C, \delta_1, \delta_2>0$ such that for $t\in[T^*,T]$,
\begin{align} \label{est-gro}
\|\ve(t)\|_{H^1}^2
   \leq& C\( \int_{t}^{\infty}\frac{1}{s} \|\ve(s)\|_{H^1}^2 ds
             + \(\int_{t}^{\infty}\frac{1}{s} \|\ve(s)\|_{H^1}^2 ds\)^2 \) \nonumber \\
   & +C\( \int_{t}^{\infty} B_*(s) (\|\ve\|_{H^1}^2+ \phi(\delta_1 s))ds + e^{-\delta_2 t}\).
\end{align}
\end{proposition}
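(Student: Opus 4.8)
The plan is to run the standard Lyapunov/coercivity argument, but keeping the noise-dependent error terms explicit. Since by Proposition~\ref{Prop-W-expan-cri} the leading part $\sum_{k=1}^K\big(2E(Q_{w_k^0})+(w_k^0)^{-2}\|Q_{w_k^0}\|_{L^2}^2\big)$ of $G(t)$ is \emph{time-independent}, I would first evaluate \eqref{fun-g-cri} at the endpoint $t=T$, where $\ve(T)=0$ and $\calp_k(T)=(x_k^0,\theta_k^0,w_k^0)$, to identify this leading part with $G(T)+\calo(e^{-\delta T})$. Subtracting the expansion at a general $t$ from that at $T$, and using $e^{-\delta T}\le e^{-\delta t}$ for $t\le T$, this gives
\begin{align*}
H(\ve(t))={}&\big(G(t)-G(T)\big)+o(\|\ve(t)\|_{H^1}^2)+\calo\big(|w_k(t)-w_k^0|\,\|\ve(t)\|_{H^1}^2\big)\\
&+\calo\Big(\textstyle\sum_{k}|w_k(t)-w_k^0|\,\big|{\rm Re}\int\wt R_k(t)\bar{\ve}(t)dx\big|\Big)+\calo(e^{-\delta t}).
\end{align*}
By the coercivity \eqref{H-coercive} of $H$, one has $C\|\ve(t)\|_{H^1}^2\le H(\ve(t))+C^{-1}\sum_k\big({\rm Re}\int\wt R_k(t)\bar{\ve}(t)dx\big)^2$, so it suffices to (a) bound $G(t)-G(T)$, (b) bound the unstable directions ${\rm Re}\int\wt R_k\bar{\ve}$, and (c) absorb the leftover $\|\ve(t)\|_{H^1}^2$-terms.

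For (a), I would write $G(t)-G(T)=-\int_t^T G'(s)\,ds$ and differentiate $G=2E(u)+\sum_k\big((w_k^0)^{-2}+\tfrac{|v_k|^2}{4}\big)I_k-v_k\cdot M_k$ using the variation estimates already at hand: Proposition~\ref{Prop-energy-cri} for $E'(u)$, Proposition~\ref{Prop-Mass-mom-cri} for $I_k'$ and $M_k'$. These give $|G'(s)|\le \frac{C}{s}(\|\ve(s)\|_{H^1}^2+e^{-\delta s})+C B_*(s)(\|\ve(s)\|_{H^1}^2+\phi(\delta_1 s)+e^{-\delta_2 s})$. Integrating over $[t,T]\subseteq[t,\infty)$, using the boundedness of $B_*$ on $[T^*,\infty)$ (see \eqref{B*-T*-1}) together with $\int_t^\infty s^{-1}e^{-\delta s}\,ds+\int_t^\infty B_*(s)e^{-\delta_2 s}\,ds\le Ce^{-\delta_2 t}$, this produces precisely the three contributions $\int_t^\infty s^{-1}\|\ve(s)\|_{H^1}^2\,ds$, $\int_t^\infty B_*(s)\big(\|\ve(s)\|_{H^1}^2+\phi(\delta_1 s)\big)\,ds$ and $Ce^{-\delta_2 t}$ appearing on the right-hand side of \eqref{est-gro}.

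For (b) and (c), Corollary~\ref{Cor-Rkve-cri} gives $\big|{\rm Re}\int\wt R_k(t)\bar{\ve}(t)dx\big|\le C\big(\int_t^\infty s^{-1}\|\ve(s)\|_{H^1}^2\,ds+\|\ve(t)\|_{L^2}^2+e^{-\delta t}\big)$; squaring and using that $\|\ve(t)\|_{L^2}\le\|\ve(t)\|_{H^1}$ is small on $[T^*,T]$ (by \eqref{ve-T*-1}, taking $T^*$ closer to $T$ if necessary) shows $\big({\rm Re}\int\wt R_k\bar{\ve}dx\big)^2\le C\big(\int_t^\infty s^{-1}\|\ve\|_{H^1}^2ds\big)^2+o(1)\|\ve(t)\|_{H^1}^2+Ce^{-\delta t}$. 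The same corollary controls the term $|w_k(t)-w_k^0|\,|{\rm Re}\int\wt R_k\bar{\ve}dx|$, which — since $|w_k(t)-w_k^0|$ is small, either by \eqref{B*-wk-ak-deter-bdd} or from $w_k(T)=w_k^0$ and the modulation bound of Proposition~\ref{Prop-Mod-cri} — is a small multiple of $\|\ve(t)\|_{H^1}^2$ plus admissible $\int_t^\infty s^{-1}\|\ve\|_{H^1}^2ds$ and $e^{-\delta t}$ terms; likewise $o(\|\ve(t)\|_{H^1}^2)$ and $|w_k(t)-w_k^0|\|\ve(t)\|_{H^1}^2$ are small multiples of $\|\ve(t)\|_{H^1}^2$. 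Absorbing all such small $\|\ve(t)\|_{H^1}^2$-terms into the left-hand side yields \eqref{est-gro}. (In the subcritical case $w_k(t)\equiv w_k^0$, so these extra terms vanish and the argument is strictly simpler.)

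The main obstacle, conceptually, is that in the stochastic problem neither the energy nor the local mass/momentum is conserved, so $G$ is only \emph{almost} monotone; the scheme works only because the non-conserved parts are quantitatively controlled (Propositions~\ref{Prop-energy-cri}, \ref{Prop-Mass-mom-cri}) by $B_*$-weighted, $\ve$-quadratic, or exponentially small quantities, and because the main part of $G$ stays exactly time-independent — which in the $L^2$-critical case forces one to use the scaling invariance and the Pohozaev identity \eqref{naQL2-QLp} (hence $E(Q)=0$) so that $G(t)-G(T)$, rather than $G$ itself, is the right object and all error terms remain quadratically small in $\ve$. The rest is bookkeeping: tracking which error terms are already of the form appearing in \eqref{est-gro} and verifying that the remaining ones carry coefficients small enough to be absorbed, which is what forces $T^*$ to be taken close to $T$.
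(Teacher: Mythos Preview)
Your proposal is correct and follows essentially the same route as the paper: identify the time-independent main part of $G$ via \eqref{fun-g-cri} at $t=T$, express $H(\ve(t))$ as $G(t)-G(T)$ plus the listed errors, apply the coercivity \eqref{H-coercive}, bound $G(t)-G(T)$ by integrating the derivative estimates of Propositions~\ref{Prop-Mass-mom-cri} and \ref{Prop-energy-cri}, control the unstable directions through Corollary~\ref{Cor-Rkve-cri}, and absorb the small $\|\ve(t)\|_{H^1}^2$ terms by taking $T^*$ close to $T$. The paper's proof is organized identically, and your handling of the squared term $\big({\rm Re}\int\wt R_k\bar\ve\,dx\big)^2$ and of the $|w_k(t)-w_k^0|$ prefactors matches the paper's treatment.
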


{\bf Proof.}
Since $\ve(T)=0$, by \eqref{fun-g-cri},
\begin{align}  \label{Gtn-sub}
   G(T)=\sum_{k=1}^{K}(2E(Q_{w_{k}^0})+ (w_{k}^0)^{-2}\|Q_{w_{k}^0}\|_{L^2}^2)
+\calo(e^{-\delta T}),
\end{align}
which along with Proposition \ref{Prop-W-expan-cri} yields that
\begin{align}  \label{Hve-cri}
H(\varepsilon(t))
=&G(t)-G(T)
+\calo\(\sum_{k=1}^{K}\bigg|(w_k(t)-w_{k}^0){\rm Re}\int  \wt R_k(t)\bar{\ve}(t)dx\bigg|\) \nonumber \\
 &+ \calo(|w_k(t)-w_{k}^0|\ \|\ve(t)\|^2_{H^1}+ e^{-\delta t})+o(\|\ve(t)\|_{H^1}^2) .
\end{align}
Taking into account the coercivity \eqref{H-coercive}
we then come to, for $t$ close to $T$,
\begin{align} \label{ve-G-esti}
  \|\ve(t)\|_{H^1}^2
  \leq& C \( |G(t)-G(T)|
        + \sum\limits_{k=1}^K\({\rm Re} \int  \wt R_k(t)\ol{\ve}(t) dx \)^2
+ \sum_{k=1}^{K}\bigg|(w_k(t)-w_{k}^0){\rm Re}\int \wt  R_k(t)\bar{\ve}(t)dx\bigg| \) \nonumber \\
  &+ C \(|w_k(t)-w_{k}^0|\ \|\ve(t)\|^2_{H^1}+ e^{-\delta t} \)+o(\|\ve(t)\|_{H^1}^2) .
\end{align}

Note that,
by definition \eqref{Wei-def} and Propositions \ref{Prop-Mass-mom-cri} and \ref{Prop-energy-cri},
\begin{align} \label{1}
|G(t)-G(T)|&\leq C|E(t)-E(T)|+C\sum_{k=1}^{K}(|I_k(t)-I_k(T)|+|M_k(t)-M_k(T)|) \nonumber \\
&\leq  C\int_{t}^{\infty}\frac{1}{s}(\|\ve(s)\|_{H^1}^2+e^{-\delta_2 s})ds
+ C\int_{t}^{\infty} B_*(s) (\|\ve(s)\|_{H^1}^2+ \phi(\delta_1 s) + e^{-\delta_2 s})ds,
\end{align}
where $C>0$.
Moreover, by Corollary \ref{Cor-Rkve-cri},
\be\label{2}
 \bigg| {\rm Re}\int \wt  R_k(t)\bar{\ve}(t) dx \bigg|
\leq C \( \int_{t}^{\infty}\frac{1}{s}\|\ve(s)\|_{H^1}^2ds + \|\ve(t)\|^2_{H^1}  + e^{-\delta t} \) .
\ee

Therefore,
combing \eqref{Hve-cri}, \eqref{ve-G-esti}, \eqref{1}, \eqref{2}
and letting  $T^*$ close to $T$ such that $|\omega_k-\omega_{k}^0|$ is small enough
we obtain \eqref{est-gro} and thus finish the proof.
\hfill $\square$

\section{Proof of main results} \label{Sec-Proof-Main}

This section is devoted to the proof of main results.
As in Section \ref{Sec-Local-Wein-cri}, we shall perform
the path-by-path analysis for $\mathbb{P}$-a.e. $\omega\in \Omega$.
The crucial ingredients of  the proof are the uniform estimates
of the remainder and geometrical parameters.

\subsection{Uniform estimates}    \label{Subsec-Uni-esti-cri}
Take any increasing sequence $\{T_n\}$ such that $\lim_{n\to \infty}T_n=+\infty$ and
consider the approximating solutions $u_n$
satisfying the equation on $[T_0,T_n]$
(for the definition of $T_0$ see Theorem \ref{Thm-un-uni-cri} below)
\be    \label{equa-u-t}
\left\{ \begin{aligned}
 &i\partial_t u_n+\Delta u_n+|u_n|^{\frac 4d}u_n+ (b_* \cdot \nabla +c_*) u_n =0,   \\
 &u_n(T_n)=\sum_{k=1}^{K}R_k(T_n)\ (=: R(T_n)).
\end{aligned}\right.
\ee

The uniform estimates of the remainder
and geometrical parameters are contained in Theorem \ref{Thm-un-uni-cri} below.

\begin{theorem} (Uniform estimates) \label{Thm-un-uni-cri}
Let $\delta_1, \delta_2>0$ be as in Propositions \ref{Prop-Mod-cri}, \ref{Prop-Mod-sub} and \ref{Prop-veH1-cri}.
Let $\wt \delta \in (0,\delta_1\wedge \delta_2)$ in {\rm Case (I)},
and $\wt \delta=1$ in {\rm Case (II)}.
Then, there exists $T_0>0$
such that for $n$ large enough,
$u_n$ admits the
geometrical decomposition \eqref{geo-dec-cri}
and \eqref{geo-dec-sub} on $[T_0, T_n]$
in the critical and subcritical cases, respectively,
and $u_n$ obeys the following estimate:
\be\label{ven-boot}
\|\ve_n(t)\|_{H^1}^2\leq \phi(\wt\delta t),\ \ t\in [T_0, T_n],
\ee
where $\phi$ is the decay function given by \eqref{phi-def}.

Moreover,
let $\calp_{n,k} = (\a_{n,k}, \theta_{n,k}, w_{n,k}) \in \bbx$, $1\leq k\leq K$,
be the corresponding modulation parameters.
Then, there exists $C,\wt \delta>0$ such that for $n$ large enough,
\be\label{a-theta-boot}
\sum_{k=1}^{K}(|w_{n,k}(t)-w_{k}^0|+|\al_{n,k}(t)-x_k^0|+|\t_{n,k}(t)-\theta^0_k|)
 \leq C \int_{t}^{\infty}s\phi^\frac 12(\wt \delta s)ds,
\quad \forall t\in[T_0,T_n].
\ee
\end{theorem}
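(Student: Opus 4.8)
The plan is to run a bootstrap (continuity) argument on $[T_0, T_n]$ with the bootstrap hypothesis being precisely \eqref{ven-boot}, but with the sharp constant $1$ relaxed to, say, $2$, and then close it with the constant improved to $1$. First I would fix $\bbp$-a.e.\ $\omega$, choose the random time $T_*(\omega)$ from \eqref{B*-T*-1}--\eqref{B*-wk-ak-deter-bdd}, and then select a \emph{deterministic} $T_0$ (depending only on the structural constants $\delta_1,\delta_2$, the velocities $v_k$, the ground state decay, and in Case (II) on $\upsilon_*$) large enough that all the "for $T$ large enough" thresholds in Propositions \ref{Prop-Mod-cri}, \ref{Prop-Mod-sub}, \ref{Prop-Mass-mom-cri}, \ref{Prop-energy-cri}, \ref{Prop-W-expan-cri}, \ref{Prop-veH1-cri} and Corollary \ref{Cor-Rkve-cri} are met; here one uses $B_*(t)\to 0$ so that, after possibly enlarging $T_0$ past $T_*(\omega)$, the factor $B_*(t)$ is as small as we wish on $[T_0,\infty)$. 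Note $u_n(T_n)=R(T_n)$ gives $\ve_n(T_n)=0$, so the geometrical decomposition of Proposition \ref{Prop-dec-un-cri} (resp.\ \ref{Prop-dec-un-sub}) applies near $T_n$ and \eqref{ven-boot} holds trivially at $t=T_n$; let $T^*_n$ be the infimum of times $t\le T_n$ down to which the decomposition persists and $\|\ve_n(s)\|_{H^1}^2\le 2\phi(\wt\delta s)$ for $s\in[t,T_n]$.

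The core estimate is to feed the bootstrap hypothesis into Proposition \ref{Prop-veH1-cri}. On $[T^*_n,T_n]$ we have $\|\ve_n(s)\|_{H^1}^2\le 2\phi(\wt\delta s)$, hence $\int_t^\infty \frac1s\|\ve_n\|_{H^1}^2\,ds\le 2\int_t^\infty \frac1s\phi(\wt\delta s)\,ds$, and by the choice $\wt\delta<\delta_1\wedge\delta_2$ in Case (I) (resp.\ $\wt\delta=1$ and $\upsilon_*$ large in Case (II)) one checks the elementary decay comparisons
\begin{align*}
  \int_t^\infty \frac1s\phi(\wt\delta s)\,ds + \int_t^\infty B_*(s)\phi(\delta_1 s)\,ds + e^{-\delta_2 t}
  \le \tfrac{1}{100}\,\phi(\wt\delta t),
\end{align*}
valid for $t\ge T_0$ provided $T_0$ is large (and, in Case (I), $B_*\le 1$; in Case (II), using \eqref{gl-t2-decay} to bound $B_*(s)$). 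The squared integral term is even smaller. Plugging these into \eqref{est-gro} gives $\|\ve_n(t)\|_{H^1}^2\le \tfrac12\phi(\wt\delta t)$ on $[T^*_n,T_n]$, which strictly improves the bootstrap constant from $2$ to $\tfrac12<1$. A standard continuity/open-closed argument then forces $T^*_n=T_0$ and yields \eqref{ven-boot} on all of $[T_0,T_n]$; along the way the improved bound keeps $\|\ve_n\|_{H^1}<1$ and, via the modulation estimates below, keeps $|w_{n,k}-w_k^0|+|\a_{n,k}-x_k^0|$ within the region \eqref{B*-wk-ak-deter-bdd} where all propositions are licensed, so the decomposition cannot break down before $T_0$.

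Finally, for the parameter bound \eqref{a-theta-boot}: integrating the modulation control from Proposition \ref{Prop-Mod-cri} (resp.\ \ref{Prop-Mod-sub}) backward from $T_n$, where $\calp_{n,k}(T_n)=(x_k^0,\theta_k^0,w_k^0)$, gives for each component
\begin{align*}
  |w_{n,k}(t)-w_k^0| + |\a_{n,k}(t)-x_k^0| + |\t_{n,k}(t)-\theta_k^0|
  \le C\int_t^{T_n}\bigl(\|\ve_n(s)\|_{H^1} + B_*(s)\phi(\delta_1 s) + e^{-\delta_2 s}\bigr)\,ds,
\end{align*}
where for $\theta_{n,k}$ one first absorbs the $\dot\t_k-(w_k^{-2}-(w_k^0)^{-2})$ term using $|w_{n,k}-w_k^0|$ already being controlled (a short Gronwall-type bookkeeping). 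Inserting \eqref{ven-boot}, i.e.\ $\|\ve_n(s)\|_{H^1}\le\phi^{1/2}(\wt\delta s)$, and noting that $s\phi^{1/2}(\wt\delta s)$ dominates both $B_*(s)\phi(\delta_1 s)$ and $e^{-\delta_2 s}$ for $s\ge T_0$ (again by the choice of $\wt\delta$, and in Case (II) by taking $\upsilon_*$ large), we obtain \eqref{a-theta-boot}. The main obstacle is the self-consistency of the bootstrap: one must verify that the \emph{same} $T_0$ simultaneously makes the contraction in Proposition \ref{Prop-veH1-cri} work, keeps the parameters inside the admissible region \eqref{B*-wk-ak-deter-bdd} (which itself depends on the modulation estimates, creating a mild circularity resolved by the strict improvement $2\mapsto\tfrac12$), and handles both decay regimes uniformly — in particular the Case (II) verification that $\int_t^\infty B_*(s)\phi(\delta_1 s)\,ds$ and $e^{-\delta_2 t}$ are $o(\phi(\wt\delta t))=o(t^{-\upsilon_*})$ rests on the Lévy modulus asymptotics \eqref{gl-t2-decay} and forces "$\upsilon_*$ sufficiently large."
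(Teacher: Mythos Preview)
Your proposal is correct and follows essentially the same route as the paper: the paper isolates the key step as a separate bootstrap proposition (Proposition \ref{Prop-dec-un-t0-boot}), which assumes \eqref{ven-boot} on $[t^*,T_n]$, extends by continuity to where $\|\ve_n\|_{H^1}^2\le 2\phi(\wt\delta t)$, and then feeds this into \eqref{est-gro} to recover the constant $\tfrac12$; the case split and the use of the L\'evy modulus bound \eqref{B*-Levy-contin} in Case (II) match your outline. The parameter estimate \eqref{a-theta-boot} is obtained exactly as you describe, by integrating the modulation equations (the paper records this as \eqref{est-w-a-t}), including the extra factor of $s$ arising from the $|w_{n,k}-w_k^0|$ contribution to $|\dot\theta_{n,k}|$.
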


The proof of Theorem \ref{Thm-un-uni-cri} relies crucially on
the following bootstrap estimate.

\begin{proposition} (Bootstrap estimate)  \label{Prop-dec-un-t0-boot}
Let $\wt \delta$ be as in Theorem \ref{Thm-un-uni-cri}.
Then, for $n$ large enough,
the following holds:

Suppose that there exists $t^*(<T_n)$ such that
$u_n$ admits the  decomposition \eqref{geo-dec-cri} (resp. \eqref{geo-dec-sub})
in the critical case (resp. the subcritical case)
and obeys estimate \eqref{ven-boot} on $[t^*,T_n]$.
Then,
there exists $t_* (<t^*)$ such that the decomposition
and the following improved estimate hold on $[t_*, T_n]$:
\begin{align} \label{ven-boot-2}
\|\ve_n(t)\|^2_{H^1}\leq \frac {1}{2}\phi(\wt\delta t).
\end{align}
\end{proposition}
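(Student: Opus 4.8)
The plan is to run a standard continuity/bootstrap argument: the decomposition of Propositions \ref{Prop-dec-un-cri}/\ref{Prop-dec-un-sub} holds down to some $T^*$, and on the interval where the a priori bound $\|\ve_n(t)\|_{H^1}^2 \leq \phi(\wt\delta t)$ is assumed, we must close it with the extra factor $1/2$. The central estimate is the coercivity-type control of the remainder in Proposition \ref{Prop-veH1-cri}, namely
\begin{align*}
\|\ve_n(t)\|_{H^1}^2
\leq& C\Big( \int_t^\infty \tfrac1s \|\ve_n(s)\|_{H^1}^2\, ds
          + \big(\int_t^\infty \tfrac1s \|\ve_n(s)\|_{H^1}^2\, ds\big)^2 \Big)
    + C\Big( \int_t^\infty B_*(s)\big(\|\ve_n\|_{H^1}^2 + \phi(\delta_1 s)\big)ds + e^{-\delta_2 t}\Big).
\end{align*}
First I would feed the a priori bound $\|\ve_n(s)\|_{H^1}^2 \leq \phi(\wt\delta s)$ into the right-hand side. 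The term $\int_t^\infty \tfrac1s \phi(\wt\delta s)\,ds$: in Case (I), $\phi(\wt\delta s) = e^{-\wt\delta s}$ and this is $\mathcal O(e^{-\wt\delta t}/t) = o(e^{-\wt\delta t})$; in Case (II), $\phi(\wt\delta s) = (\wt\delta s)^{-\upsilon_*}$ and $\int_t^\infty s^{-1-\upsilon_*}\,ds = \mathcal O(t^{-\upsilon_*}) = \mathcal O(\phi(\wt\delta t))$ up to a constant depending on $\wt\delta$. The squared term is then of strictly higher order, hence negligible. So this first block is bounded by $C'\phi(\wt\delta t)$ where $C'$ can be made small by choosing $t\ge T_0$ large — except in Case (II) where the constant is genuinely fixed; here is where $\upsilon_*$ being "sufficiently large" enters, so that the gain from integration compensates.

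The main obstacle is the noise term $\int_t^\infty B_*(s)(\|\ve_n\|_{H^1}^2 + \phi(\delta_1 s))\,ds$. Here $B_*(s) = \sup_{s\le r} \sum_l |B_{*,l}(r)| \to 0$ as $s\to\infty$ $\bbp$-a.s. by \eqref{B*-o1}, but with no deterministic rate in general. The key observation is that in Case (I) the full integrand already carries the exponential $\phi(\delta_1 s) = e^{-\delta_1 s}$, and $B_*(s)$ is a.s. bounded (indeed $\le 1$ for $s \ge T_*(\omega)$ by \eqref{B*-T*-1}), so $\int_t^\infty B_*(s)\phi(\delta_1 s)\,ds \le \int_t^\infty e^{-\delta_1 s}\,ds = \mathcal O(e^{-\delta_1 t})$, and since $\wt\delta < \delta_1\wedge\delta_2$ this is $o(\phi(\wt\delta t))$. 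In Case (II), one must exploit the sharper decay \eqref{gl-t2-decay}: via the time-change/Lévy modulus-of-continuity argument alluded to after \eqref{gl-t2-decay} (the proof of \eqref{B*-Levy-contin}), $B_*(s)$ decays like $1/s$ up to logarithmic corrections, so $\int_t^\infty B_*(s)\phi(\delta_1 s)\,ds = \mathcal O(\int_t^\infty s^{-1-\upsilon_*}\cdot(\log\cdot)\, ds)$, again $\mathcal O(\phi(t))$ with room to spare once $\upsilon_*$ is large. For the part $\int_t^\infty B_*(s)\|\ve_n(s)\|_{H^1}^2\,ds$, insert the a priori bound and absorb: $\int_t^\infty B_*(s)\phi(\wt\delta s)\,ds \le \sup_{s\ge t} B_*(s)\cdot\int_t^\infty \phi(\wt\delta s)\,ds$, and in Case (I) $\int_t^\infty e^{-\wt\delta s}\,ds = \wt\delta^{-1}e^{-\wt\delta t}$ while $\sup_{s\ge t}B_*(s) = B_*(t) \to 0$; in Case (II), $\int_t^\infty s^{-\upsilon_*}\,ds = \mathcal O(t^{1-\upsilon_*})$, which is $o(t^{-\upsilon_*})\cdot t$, so combined with the vanishing of $B_*(t)$ this is $o(\phi(\wt\delta t))$ for $t$ large. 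Finally $e^{-\delta_2 t} = o(\phi(\wt\delta t))$ since $\wt\delta < \delta_2$ in Case (I) and $\phi$ is only polynomial in Case (II).

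Collecting these, for $t \ge T_0$ with $T_0 = T_0(\omega)$ chosen large enough (depending on $B_*(\cdot)$, i.e. on $\omega$, in Case (I), and on the deterministic decay in Case (II)), the right-hand side of Proposition \ref{Prop-veH1-cri} is bounded by $\tfrac14\phi(\wt\delta t)$, hence in particular by $\tfrac12\phi(\wt\delta t)$, giving \eqref{ven-boot-2}. The improved bound is strict, so by continuity of $t\mapsto\|\ve_n(t)\|_{H^1}$ (from $\ve_n \in C([T^*,T_n];H^1)$) the decomposition and the a priori bound \eqref{ven-boot} persist on a slightly larger interval $[t_*,T_n]$ with $t_* < t^*$; the uniform non-degeneracy of the Jacobian from Lemma \ref{Lem-Geo-Decom} (cf.\ \eqref{Jacob-posit}) ensures the modulation parameters extend as $C^1$ functions and that the bounds \eqref{B*-T*-1}--\eqref{B*-wk-ak-deter-bdd} underlying all the earlier propositions remain valid there, so the hypotheses of Propositions \ref{Prop-Mod-cri}/\ref{Prop-Mod-sub} and \ref{Prop-veH1-cri} apply again on $[t_*,T_n]$. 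This completes the bootstrap step. I expect the bookkeeping in Case (II) — tracking exactly how large $\upsilon_*$ must be so every error integral beats $\phi(t) = t^{-\upsilon_*}$, and invoking the Lévy-modulus estimate for $B_*$ — to be the delicate part; Case (I) is comparatively soft because every error term is exponentially small with a rate strictly larger than $\wt\delta$.
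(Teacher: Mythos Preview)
Your approach is correct and is essentially the paper's own: extend the decomposition by continuity, feed the a~priori bound $\|\ve_n\|_{H^1}^2\le\phi(\wt\delta t)$ into the coercivity estimate of Proposition~\ref{Prop-veH1-cri}, and close the bootstrap case by case, invoking the L\'evy--H\"older bound $tB_*(t)\le 2\sqrt{c^*}$ in Case~(II). One small correction in Case~(II): the term $B_*(t)\int_t^\infty s^{-\upsilon_*}\,ds$ is only $\mathcal O(t^{-\upsilon_*})$, not $o(t^{-\upsilon_*})$, since $\int_t^\infty s^{-\upsilon_*}\,ds\sim t\cdot t^{-\upsilon_*}$ and $B_*(t)$ vanishes only like $1/t$; the smallness of the resulting constant comes from the factor $1/(\upsilon_*-1)$ together with the bound on $tB_*(t)$, exactly the mechanism you flag in your last paragraph.
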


{\bf Proof.}
By the continuity of solutions in $H^1$
and of the Jacobian matrices in the proof of geometrical decomposition,
we may take $t_*(<t^*)$  close to $t^*$,
such that the geometrical decompositions \eqref{geo-dec-cri},
\eqref{geo-dec-sub}
and the following estimate  holds on $[t_*,T_n]$,
\be\label{ven-boot-3}
\|\ve_n(t)\|_{H^1}^2\leq 2 \phi(\wt\delta t).
\ee

Using Proposition  \ref{Prop-Mod-cri} and \eqref{ven-boot-3}
we derive that for any $t\in[t_*,T_n]$,
\begin{align} \label{dot-wnkank}
\sum_{k=1}^{K}(|\dot{w}_{n,k}(t)|+|\dot{\alpha}_{n,k}(t)|)
\leq  C(\|\ve(t)\|_{H^1} + B_*(t)\phi (\wt \delta t) +e^{-\delta_2 t})
 \leq  C \phi^\frac 12(\wt \delta t),
\end{align}
which along with \eqref{Mod-esti-cri} and \eqref{Mod-esti-sub} yields that
\begin{align} \label{dot-thetank}
\sum_{k=1}^{K}|\dot{\t}_{n,k}(t)|
\leq& C(|w_{n,k}(t) -w_{k}^0|+ \|\ve(t)\|_{H^1}+e^{-\wt\delta t}) \nonumber \\
\leq& C \phi^\frac 12(\wt\delta t)+\int_{t}^{\9} \phi^\frac 12(\wt \delta s)ds
\leq Ct \phi^{\half}(\wt \delta t),
\end{align}
where the last step is due to the fact that
$\int_t^\9 e^{-\frac 12 \wt \delta s} ds \leq 2 \wt \delta^{-1}  e^{-\frac 12 \wt \delta s}$
and
$\int_t^\9 s^{-\frac{\upsilon_*}{2}} ds = \frac{2}{\upsilon_*-2}t^{-\frac{\upsilon_*}{2} + 1}
\leq 2 t \phi^\frac 12 (\wt \delta t)$ if $\upsilon_* \geq 3$.

Then, integrating \eqref{dot-wnkank} and \eqref{dot-thetank} over $[t,T_n]$
we get that
for a deterministic constant $C>0$,
\be\label{est-w-a-t}
\sum_{k=1}^{K}(|w_{n,k}(t) -w_{k}^0| + |\al_{n,k}(t)-x_{k}^0|+|\t_{n,k}(t)-\theta_{k}^0|)
\leq C \int_t^\9 s \phi^{\half}(\wt \delta s) ds,
\ee
which in particular converges to $0$ as $t\to \9$.
Hence,
the estimates in the previous subsection are all valid
for $t$ large enough.

Below we consider {\rm Case (I)} and {\rm Case (II)} separately.

In {\rm Case (I)},
by \eqref{phil-exp-decay}, \eqref{est-gro} and \eqref{ven-boot-3},
\begin{align}
\|\ve_n(t)\|_{H^1}^2
    \leq& C \( \int_t^\9 \frac{1}{s}e^{-\wt\delta s}d s
      + \(\int_t^\9 \frac{1}{s}e^{-\wt\delta s}d s \)^2
      + \int_t^\9  B_* e^{-\wt\delta s} ds
     + e^{-\delta t}  \)   \nonumber \\
    \leq& C \(\frac{1}{\wt\delta t} + \frac{1}{(\wt\delta t)^2}
                + \frac{1}{\wt\delta} B_*(t)
                + e^{-(\delta -\wt\delta)t}\) e^{-\wt\delta t}.
\end{align}
Taking $t$ large enough such that
\begin{align*}
   C \(\frac{1}{\wt\delta t} + \frac{1}{(\wt\delta t)^2}
                + \frac{1}{\wt\delta} B_* (t)
                + e^{-(\delta -\wt\delta)t}\)
   \leq \frac 12
\end{align*}
we get
\be
    \|\ve_n(t)\|_{H^1}^2 \leq \frac 12 e^{-\wt\delta t}.
\ee
This verifies estimate \eqref{ven-boot-2}  in {\rm Case (I)}.

Concerning {\rm Case (II)},
using \eqref{phil-poly-decay},  \eqref{est-gro} and \eqref{ven-boot-3}
we infer that
\begin{align} \label{ven-H1-poly-esti}
\|\ve_n(t)\|_{H^1}^2
    \leq& C \( \int_t^\9  s^{-\upsilon_*-1} d s
      + \(\int_t^\9 s^{-\upsilon_*-1} d s \)^2
      + \int_t^\9  B_*(s) s^{-\upsilon_*} ds
     + e^{-\delta t}  \)   \nonumber \\
    \leq& C \(\frac{1}{\upsilon_*} + \frac{t^{-\upsilon_*}}{\upsilon_*^2}
                + \frac{t B_*(t)}{\upsilon_*-1}
                + \frac{t^{\upsilon_*} e^{-\delta t}}{\delta}  \) t^{-\upsilon_*}.
\end{align}

Using the theorem on time change for continuous martingales
and the Levy H\"older continuity of Brownian motions
we derive from \eqref{gl-t2-decay}  that
$\bbp$-a.s.
for $t$ large enough,
\begin{align} \label{B*-Levy-contin}
   |B_{*,l}(t)| \leq 2 \(\int_t^\9 g_l^2 ds \log\(\int_t^\9 g_l^2 ds\)^{-1} \)^\frac 12
   \leq \frac {2\sqrt{c^*}}{t},\ \ 1\leq l\leq N,
\end{align}
which yields that $\bbp$-a.s. for $t$ large enough
$tB_*(t) \leq 2\sqrt{c^*}$.

Thus,
we may take $\upsilon_*$ large enough such that for any $t$ large enough,
\begin{align}
    C \(  \frac{1}{\upsilon_*} + \frac{t^{-\upsilon_*}}{\upsilon_*^2}
                + \frac{t B_*(t)}{\upsilon_*-1}
                + \frac{t^{\upsilon_*} e^{-\delta t}}{\delta}
         \)
    \leq \frac 12,
\end{align}
which in particular yields that
\begin{align}
   \|\ve_n(t)\|_{H^1}^2 \leq \frac 12 t^{-\upsilon_*}.
\end{align}
Therefore,
estimate \eqref{ven-boot-2} in {\rm Case (II)} is verified.
The proof is complete.
\hfill $\square$

{\bf Proof of Theorem \ref{Thm-un-uni-cri}.}
Estimate \eqref{ven-boot} can be proved by
using the  bootstrap estimate in Proposition \ref{Prop-dec-un-t0-boot} and
standard continuity arguments, see, e.g., \cite{MM06}, \cite{SZ20}.
Estimate \eqref{a-theta-boot} then follows from \eqref{est-w-a-t}.
\hfill $\square$

\subsection{Proof of main results} \label{Subsec-proof-main}

We are now in position to prove
Theorems \ref{Thm-Soliton-RNLS} and \ref{Thm-Soliton-SNLS}.

\paragraph{\bf Proof of Theorem \ref{Thm-Soliton-RNLS}}
By Theorem \ref{Thm-un-uni-cri},
$\{u_n(T_0)\}$ is uniformly bounded in $H^1$.
This yields that  up to a subsequence (still denoted by $\{n\}$),
for some $u_0\in H^1$,
\begin{align}
    u_n(T_0) \rightharpoonup u_0\ \ weakly\ in\ H^1,\ as\ n \to \9.
\end{align}

We claim that the convergence is strong in $L^2$, i.e.,
\begin{align} \label{un-u0-L2-cri}
    u_n(T_0) \to u_0\ \  in\ L^2,\ as\ n \to \9.
\end{align}

For this purpose,
it suffices to prove that
$\{u_n(T_0)\}$ is uniformly integrable,
i.e., for any $\epsilon>0$, there exists $A_\epsilon>0$ such that for all $n$ large,
\begin{align} \label{un-Ave-cri}
\int_{|x|\geq A_\epsilon}|u_n(T_0)|^2dx\leq \epsilon.
\end{align}

In order to prove \eqref{un-Ave-cri}, we first fix $T_1>T_0$ such that
\begin{align} \label{ve-T1}
   \|\ve(T_1)\|_{H^1}^2 \leq \phi(\wt \delta T_1) \leq \frac 16 \epsilon.
\end{align}
By \eqref{a-theta-boot},
we may take $A_0= A_0(v_k, T_1, x_k^0, 1\leq k\leq K)$ large enough such that for $|x|\geq A_0$ and for $1\leq k\leq K$,
\begin{align}
   |x-v_kT_1 - \a_{n,k}(T_1)|
  \geq |x| - |v_k| T_1
       - \sup\limits_{n\geq 1,t\geq T_0}  |\a_{n,k}(t)|
  \geq A_0 - \frac 1 2 A_0
  \geq \frac 12 A_0.
\end{align}
Hence, by the exponential decay of ground state,
for $A_0$ possibly larger,
\begin{align} \label{int-Rn-A0}
  \sup\limits_{n\geq1} \int_{|x|\geq A_0}| \wt R_n(T_1)|^2dx
  \leq C \int\limits_{|x|\geq \frac 12 A_0} e^{-\delta |x|} dx
  \leq C e^{-\frac{\delta}{4}A_0}
  \leq \frac{\epsilon}{6}.
\end{align}
Then, it follows from  \eqref{ve-T1} and \eqref{int-Rn-A0}  that
for all $n$ large enough,
\begin{align} \label{uT1-A-cri}
  \int_{|x|\geq A_0}|u_n(T_1)|^2dx
  \leq 2\int_{|x|\geq A_0}| \wt R_n(T_1)|^2dx+2\|\ve_n(T_1)\|^2_{H^1} \leq \frac{2\epsilon}{3}.
\end{align}
Moreover,
let $\chi$ be a smooth cut off function on $\R$ such that $0\leq\chi(x)\leq1$,
$\chi(x)=0$ for $|x|\leq\frac{1}{2}$; $\chi(x)=1$ for $|x|\geq1$ and $|\chi^\prime|\leq2$.
Let $\chi_{A_\epsilon}(x) :=\chi\(\frac{|x|}{A_\epsilon}\)$,
where $A_\epsilon=\max\{\frac{3\wt C (T_1-T_0)}{\epsilon},2A_0\}$
and $\wt C$ is the constant in \eqref{dtun-A-cri} below.
By the integration-by-parts formula,
\begin{align} \label{dtun-A-cri}
  \bigg|\frac{d}{dt}\int \chi_{A_\epsilon}|u_n(t)|^2dx\bigg|
=  \bigg|{\rm Im} (2\ol{u_n}\na u_n + b_* |u_n|^2) \cdot \na \chi_{A_\epsilon} \bigg|
   \leq \frac{\wt C}{A_\epsilon}
   \leq \frac{\epsilon}{3(T_1-T_0)},
\end{align}
Thus, we derive from \eqref{uT1-A-cri} and \eqref{dtun-A-cri} that,
for $n$ large enough,
\begin{align}
\int_{|x|\geq A_\epsilon}|u_{n}(T_0)|^2dx
&\leq \int_{\R^d}|u_{n}(T_1)|^2\chi_{A_\epsilon}dx+
\int_{T_0}^{T_1}\left|\frac{d}{dt}\int_{\R^d}|u_{n}(t)|^2\chi_{A_\epsilon}dx\right|dt  \nonumber \\
&\leq \int_{|x|\geq A_0}|u_{n}(T_1)|^2dx+\frac{\epsilon}{3}\leq \epsilon,
\end{align}
which yields \eqref{un-Ave-cri},
and thus proves \eqref{un-u0-L2-cri}, as claimed.

Now, for $n$ large enough,
since $u_n$ solves the equation \eqref{equa-u-t}
on $[T_0,T_n]$ with $\lim_{n\rightarrow\infty}T_n=+\infty$
and obey the uniform estimates in $C([T_0, T]; H^1$)
for any $T_0<T<\9$,
using the asymptotic \eqref{un-u0-L2-cri} and
comparison arguments
(see, e.g., \cite{HRZ18,TVZ07,Z18})
we infer that,
there exists a unique $L^2$-solution $u$ to \eqref{equa-u-low} on $[T_0, \9)$
such that
\begin{align} \label{un-u-0-L2}
\lim_{n\rightarrow \infty}\|u_n(t)-u(t)\|_{L^2}=0,\ \ \forall t\in [T_0, \9).
\end{align}
Moreover,
since $u_0\in H^1$,
the preservation of $H^1$-regularity also yields
$u(t) \in H^1$ for $t\in [T_0, \9)$.

Furthermore,
by \eqref{a-theta-boot}
and straightforward computations,
if $R:=\sum_{k=1}^K R_k$ with $R_k$ given by \eqref{Rk-def},
\begin{align}
\| \wt R_n(t)-R(t)\|_{H^1}
\leq& C \sum_{k=1}^{K}(|w_{n,k}(t)-\omega_{k}^0|+|\alpha_{n,k}(t)-x_{k}^0|
+|\t_{n,k}(t)-\t_k^0|)  \nonumber \\
 \leq& C \int_t^\9 s \phi^{\half}(\wt \delta s) ds.
\end{align}
Taking into account estimate \eqref{ven-boot}
we then obtain
\begin{align}
\|u_n(t)-R(t)\|_{H^1}
\leq& \|\ve_n(t)\|_{H^1}+\| \wt R_n(t)-R(t)\|_{H^1} \nonumber \\
\leq& C \( \phi^\frac 12 (\wt \delta t) + \int_t^\9 s \phi^{\half}(\wt \delta s) ds\) \nonumber \\
\leq& C \int_t^\9 s \phi^{\half}(\wt \delta s) ds,
\end{align}
where the last step is due to the explicit expression \eqref{phi-def}
of the decay function $\phi$.

In particular,
this yields that
$u_n(t)-R(t)$ is uniformly bounded in $H^1$ for every $t\in [T_0, \infty)$,
which along with \eqref{un-u-0-L2} implies that,
up to a subsequence (still denoted by $\{n\}$ which may depend on $t$),
\be
u_n(t)-R(t)\rightharpoonup u(t)-R(t),\ \ weakly\ in\ H^1,\ as\ n\rightarrow\infty.
\ee
This yields that
\be
\|u(t)-R(t)\|_{H^1}\leq \liminf_{n}\|u_n(t)-R(t)\|_{H^1}
\leq   C \int_t^\9 s \phi^{\half}(\wt \delta s) ds.
\ee

Therefore, the proof of Theorem \ref{Thm-Soliton-RNLS}  is complete.
\hfill $\square$

\paragraph{\bf Proof of Theorem \ref{Thm-Soliton-SNLS}}

Theorem \ref{Thm-Soliton-SNLS} now follows from Theorem \ref{Thm-Soliton-RNLS}
and Theorem \ref{Thm-Equiv-X-u} via the Doss-Sussman type transforms.

More precisely, by Theorem \ref{Thm-Soliton-RNLS},
there exists a unique solution $u$ to \eqref{equa-u-low} on $[T_0,\9)$
with $T_0>0$ sufficiently large, and the asymptotic behavior \eqref{u-R-asymp} holds.
This yields that
\begin{align}
   v:=e^{-W(\9)} u
\end{align}
is a unique solution to equation \eqref{equa-v-RNLS} on $[T_0,\9)$.
Thus, applying Theorem \ref{Thm-Equiv-X-u}
we obtain that
\begin{align} \label{X-v-u}
   X:=e^{W} v = e^{W_*} u
\end{align}
solves equation \eqref{equa-X-rough} on $[T_0, \9)$ in the sense of Definition \ref{X-roughpath-def}.
The asymptotic behavior \eqref{X-Rk-asym} thus follows from \eqref{u-R-asymp}.

Furthermore, in the $L^2$-subcritical case,
using the fixed point arguments as in \cite{BRZ14,BRZ16},
based on the Strichartz and local smoothing estimates,
we may extend the solution $u$ to a larger time interval $[\sigma^*, \9)$,
where $\sigma^* \in [0,T_0)$ is a non-negative random variable.
Because of the  subcriticality of the nonlinearity,
$\sigma^*$ depends on the $H^1$-norm of the solution,
and thus
$\sigma^*=0$ if the following uniform $H^1$-bound  holds
\begin{align} \label{u-H1-bdd-subcri}
   \sup\limits_{t\in(\sigma^*, T_0]} \|u(t)\|_{H^1} <\9.
\end{align}

In order to prove \eqref{u-H1-bdd-subcri},
we derive from the evolution formula  \eqref{dtE} of energy that,
for any $t\in (\sigma^*,T_0]$,
\begin{align}
   E(u(t)) \leq E(u(T_0)) + C \int_t^{T_0} \|u(s)\|_{H^1}^2 + \|u(s)\|_{L^{p+1}}^{p+1} ds.
\end{align}
which, via the interpolation estimate (see \cite[Lemma 3.5]{BRZ16}), for some $\rho>2$,
\begin{align} \label{uLp-uH1-sub}
   \|u\|_{L^{p+1}}^{p+1} \leq C_\ve \|u\|_{L^2}^\rho + \ve \|u\|^2_{H^1},
\end{align}
and the conservation law of mass,
yields that
\begin{align}
   E(u(t)) \leq C \(1+ \int_t^{T_0} \|u(s)\|_{H^1}^2 ds \),
\end{align}
where $C>0$ is independent of $t$.
Taking into account
the definition of energy \eqref{energy-def} and
using \eqref{uLp-uH1-sub} and the conservation law of mass again
we thus arrive at
\begin{align}
   \frac 12 \|u(t)\|_{H^1}^2
   =& E(u(t)) + \frac{1}{p+1} \|u(t)\|_{L^{p+1}}^{p+1} \nonumber \\
   \leq& C \(1+ \int_t^{T_0} \|u(s)\|_{H^1}^2 ds \) + \frac{\ve}{p+1} \|u\|^2_{H^1},
\end{align}
which yields \eqref{u-H1-bdd-subcri} by taking $\ve < \frac 14 (p+1)$
and applying Gronwall's inequality.

Therefore, it follows that $\sigma^*=0$,
$u$ and so $X$   can be extended to the whole time regime $[0,\9)$
in the subcritical case.
The proof of Theorem \ref{Thm-Soliton-SNLS} is complete.
\hfill $\square$

\section{Appendix} \label{Sec-App}

Let $L=(L_+,L_-)$ be the linearized operator around the ground state state
defined by
\begin{align} \label{L+-L-}
     L_{+}:= -\Delta + I -(1+p)Q^{p}, \ \
    L_{-}:= -\Delta +I -Q^{p}.
\end{align}
For any complex valued $H^1$ function,
set $f := f_1 + i f_2$
in terms of the real and imaginary parts and
\be
(Lf,f) :=\int f_1L_+f_1dx+\int f_2L_-f_2dx.
\ee

The crucial coercivity property of linearized operators
in the subcritical and critical case are summarized  below.
\begin{lemma}  (\cite{Wenn}, see also \cite[Lemma 2.2]{MMT06}) \label{Lem-coerc-L}
Let $1< p< 1+\frac{4}{d}$.
Then, there exists $C>0$ such that
\begin{align}
(Lf,f)\geq C\|f\|_{H^1}^2
-\frac{1}{C}\left((\int Qf_1dx)^2+(\int Qf_2dx)^2+(\int \nabla Qf_1dx)^2\right),
\end{align}
where $f=f_1+if_2$.
\end{lemma}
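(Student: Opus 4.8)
The plan is to use the splitting $(Lf,f)=\langle L_+f_1,f_1\rangle+\langle L_-f_2,f_2\rangle$ and to bound the two scalar quadratic forms separately, combining the classical spectral picture of the linearized operators of \eqref{L+-L-} with a soft compactness argument that produces the explicit error terms. First I would record the spectral structure on $L^2(\bbr^d)$: both $L_+$ and $L_-$ are self-adjoint, their potentials decay exponentially by \eqref{Q-decay}, hence their essential spectrum is $[1,\infty)$ and each has only finitely many, isolated, eigenvalues below $1$; I shall repeatedly use the identity $\langle L_\pm g,g\rangle=\|g\|_{H^1}^2-\int V_\pm g^2\,dx$ with $V_\pm$ the corresponding exponentially decaying potentials. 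Differentiating $\Delta Q-Q+Q^p=0$ gives $L_-Q=0$ and $L_+\partial_{x_j}Q=0$ for $j=1,\dots,d$; together with the nondegeneracy of the ground state (cf.\ \cite{Wenn}) this yields $\ker L_-=\mathrm{span}\{Q\}$, $\ker L_+=\mathrm{span}\{\partial_{x_1}Q,\dots,\partial_{x_d}Q\}$, $L_-\ge0$, and $L_+$ with exactly one simple negative eigenvalue. In particular there is $\nu>0$ with $\langle L_-h,h\rangle\ge\nu\|h\|_{L^2}^2$ for every $h\perp Q$, and a corresponding gap for $L_+$ off $\ker L_+$ and its negative eigenspace.

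For the $L_-$ term I would write $f_2=cQ+h$ with $c=\langle f_2,Q\rangle\|Q\|_{L^2}^{-2}$ and $h\perp Q$, so that $\langle L_-f_2,f_2\rangle=\langle L_-h,h\rangle\ge\nu\|h\|_{L^2}^2$; since also $\langle L_-h,h\rangle\ge\|h\|_{H^1}^2-\|V_-\|_{L^\infty}\|h\|_{L^2}^2$, one line of interpolation upgrades this to $\langle L_-h,h\rangle\ge c_0\|h\|_{H^1}^2$ with $c_0=(1+\nu^{-1}\|V_-\|_{L^\infty})^{-1}$, and then $\|h\|_{H^1}^2\ge\frac12\|f_2\|_{H^1}^2-C|c|^2$ together with $|c|^2\le C(\int Qf_2\,dx)^2$ gives $\langle L_-f_2,f_2\rangle\ge\frac{c_0}{2}\|f_2\|_{H^1}^2-C'(\int Qf_2\,dx)^2$.

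The heart of the matter is the $L_+$ term, where the single negative direction must be absorbed, and this is where the subcriticality hypothesis enters. The key step is Weinstein's positivity inequality:
\[
\langle L_+g,g\rangle\ge0\quad\text{whenever }\langle g,Q\rangle=0\text{ and }\langle g,\partial_{x_j}Q\rangle=0\text{ for }j=1,\dots,d,
\]
with equality only for $g\in\ker L_+$. I would prove this following Weinstein: let $\rho$ solve $L_+\rho=Q$ in $\{\partial_{x_j}Q\}_j^\perp$ (solvable since $Q\perp\partial_{x_j}Q$ by parity), identify $\langle L_+^{-1}Q,Q\rangle=\langle\rho,Q\rangle$ with $-\frac12\frac{d}{d\omega}\|Q_\omega\|_{L^2}^2$ by differentiating the frequency-$\omega$ ground state equation $-\Delta Q_\omega+\omega Q_\omega=Q_\omega^p$ in $\omega$, and note that $\|Q_\omega\|_{L^2}^2=\omega^{\frac{2}{p-1}-\frac{d}{2}}\|Q\|_{L^2}^2$ is increasing in $\omega$ precisely when $p<1+\frac4d$, so $\langle\rho,Q\rangle<0$; then the standard decomposition of $g$ along the negative eigenfunction, along $\ker L_+$, and onto the positive subspace, combined with the constraint $\langle g,Q\rangle=0$ and the sign of $\langle\rho,Q\rangle$, yields $\langle L_+g,g\rangle\ge0$. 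With positivity in hand I would upgrade to the quantitative bound by contradiction: if it failed there would exist $g_n$ with $\|g_n\|_{H^1}=1$, $\langle g_n,Q\rangle\to0$, $\langle g_n,\partial_{x_j}Q\rangle\to0$ and $\langle L_+g_n,g_n\rangle\to\ell\le0$; passing to a weak $H^1$ limit $g_\infty$, weak lower semicontinuity of $\|\cdot\|_{H^1}^2$ and local compactness (with the exponential decay \eqref{Q-decay} of $V_+$ controlling the tails of $\int V_+g_n^2$) give $\langle L_+g_\infty,g_\infty\rangle\le\ell\le0$ and $g_\infty\perp\{Q,\partial_{x_j}Q\}$, hence $g_\infty\in\ker L_+\cap\{\partial_{x_j}Q\}^\perp=\{0\}$; then $g_n\rightharpoonup0$ forces $\int V_+g_n^2\to0$, so $\langle L_+g_n,g_n\rangle=\|g_n\|_{H^1}^2-\int V_+g_n^2\to1$, contradicting $\ell\le0$. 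This produces $\langle L_+f_1,f_1\rangle\ge c_1\|f_1\|_{H^1}^2-C_1\big((\int Qf_1\,dx)^2+\textstyle\sum_j(\int\partial_{x_j}Qf_1\,dx)^2\big)$, and adding the $L_-$ estimate with $C=\min\{c_0/2,c_1\}$ completes the proof.

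The step I expect to be the main obstacle is this $L_+$ coercivity, and in particular the sign condition $\langle L_+^{-1}Q,Q\rangle<0$: this is exactly where the assumption $p<1+\frac4d$ is used, and it degenerates at the critical exponent — which is why the critical case in Proposition \ref{Prop-dec-un-cri} must carry the third orthogonality condition in \eqref{ortho-cond-R-ve-cri} (involving $\Lambda_k$). Once positivity is established, the compactness upgrade and the $L_-$ estimate are routine. Since the statement is classical, one may alternatively simply invoke \cite{Wenn} together with \cite[Lemma 2.2]{MMT06}.
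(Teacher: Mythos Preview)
Your proposal is correct and in fact more detailed than what the paper does: the paper provides no proof of this lemma, citing instead \cite{Wenn} and \cite[Lemma 2.2]{MMT06}. Your sketch is precisely the classical Weinstein argument those references contain --- the splitting into $L_\pm$, the spectral gap for $L_-$ via $\ker L_-=\mathrm{span}\{Q\}$, and the subcritical sign condition $\langle L_+^{-1}Q,Q\rangle<0$ (equivalently, $\partial_\omega\|Q_\omega\|_{L^2}^2>0$ for $p<1+\tfrac4d$) to absorb the single negative direction of $L_+$ under the constraint $\langle g,Q\rangle=0$, followed by a compactness upgrade. One minor remark: the operators in \eqref{L+-L-} appear to contain typos ($Q^p$ and $(1+p)Q^p$ should read $Q^{p-1}$ and $pQ^{p-1}$), but your argument uses the correct kernels $L_-Q=0$, $L_+\partial_{x_j}Q=0$, so nothing in your reasoning is affected.
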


\begin{lemma} (\cite[Proposition 3.17]{CF20})  \label{Lem-coerc-cL}
Let $p=1 + \frac{4}{d}$.
Then, there exists $C>0$ such that
\be
(Lf,f)\geq C\|f\|_{H^1}^2-\frac{1}{C}\left((\int Qf_1dx)^2+(\int Qf_2dx)^2+(\int \nabla Qf_1dx)^2+(\int x\cdot\nabla Qf_1dx)^2\right),
\ee
where $f=f_1+if_2$.
\end{lemma}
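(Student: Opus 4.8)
{\bf Sketch of proof.} The plan is to split the real quadratic form as $(Lf,f)=(L_+f_1,f_1)+(L_-f_2,f_2)$ and to prove an $H^1$-coercivity bound for each self-adjoint operator separately, up to finitely many scalar directions; the only place where the critical exponent intervenes — and the sole difference from Lemma~\ref{Lem-coerc-L} — is that $L_+$ then acquires one extra degenerate direction, along the scaling generator $\Lambda Q:=\tfrac{2}{p-1}Q+x\cdot\nabla Q$.

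For $L_-$ I would proceed exactly as in the subcritical case: $L_-\geq 0$ with $\ker L_-=\mathrm{span}\{Q\}$, $Q>0$ the bottom eigenfunction, and essential spectrum $[1,\infty)$ separated from $0$ (classical, cf.\ \cite{Wenn}). Decomposing $f_2=\|Q\|_{L^2}^{-2}\big(\int Qf_2\,dx\big)Q+f_2^\perp$ with $f_2^\perp\perp Q$ gives $(L_-f_2^\perp,f_2^\perp)\geq\nu\|f_2^\perp\|_{L^2}^2$, and combining with $(L_-g,g)\geq\|\nabla g\|_{L^2}^2-C\|g\|_{L^2}^2$ and interpolation yields $(L_-f_2,f_2)\geq c\|f_2\|_{H^1}^2-C\big(\int Qf_2\,dx\big)^2$. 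This argument is insensitive to the value of $p\in(1,1+\tfrac{4}{d}]$.

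The substance is the $L_+$ bound. I would first recall the spectral picture of $L_+$: one simple negative eigenvalue $-\lambda_0<0$ with positive radial eigenfunction $\chi_0$, $\ker L_+=\mathrm{span}\{\partial_1Q,\dots,\partial_dQ\}$, and essential spectrum $[1,\infty)$ with a gap. At the critical exponent one has $\int Q\,\Lambda Q\,dx=\big(\tfrac{2}{p-1}-\tfrac{d}{2}\big)\|Q\|_{L^2}^2=0$ (equivalently $\tfrac{d}{dw}\|Q_w\|_{L^2}^2\big|_{w=1}=0$), which together with the general identity $L_+(\Lambda Q)=-2Q$ and the parity relations $Q,\Lambda Q\perp\ker L_+$ gives $L_+^{-1}Q=-\tfrac{1}{2}\Lambda Q$ on $(\ker L_+)^\perp$, hence $\int Q\,L_+^{-1}Q\,dx=0$. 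From this I would deduce, by the spectral/Lagrange-multiplier argument of \cite{Wenn}, first that $\int Q\chi_0\,dx\neq 0$ (otherwise $\int QL_+^{-1}Q\,dx$ would be a sum of nonnegative terms vanishing only if $Q\in\ker L_+$, impossible), then that $(L_+g,g)\geq 0$ for all $g\perp Q$ with the kernel of $L_+$ on $\{Q\}^\perp$ equal to $\mathrm{span}\{\partial_1Q,\dots,\partial_dQ,\Lambda Q\}$, and consequently that $(L_+g,g)\geq\nu\|g\|_{L^2}^2$ whenever $g\perp Q,\Lambda Q,\partial_1Q,\dots,\partial_dQ$ (using that these $d+2$ functions form an orthogonal family, by parity and $\int Q\,\Lambda Q\,dx=0$). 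The interpolation step then promotes this to $(L_+g,g)\geq c\|g\|_{H^1}^2$ on that subspace.

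Finally, for general real $f_1\in H^1$ I would write $f_1=Pf_1+(I-P)f_1$ with $P$ the orthogonal projection onto $\{Q,\Lambda Q,\partial_1Q,\dots,\partial_dQ\}^\perp$. Since $L_+$ is bounded from $H^1$ to $H^{-1}$ and $\|(I-P)f_1\|_{H^1}\leq C\big(|\int Qf_1\,dx|+|\int x\cdot\nabla Q\,f_1\,dx|+|\int\nabla Q\,f_1\,dx|\big)$ — here $\int\Lambda Q\,f_1\,dx=\tfrac{2}{p-1}\int Qf_1\,dx+\int x\cdot\nabla Q\,f_1\,dx$ — expanding $(L_+f_1,f_1)$ and absorbing the cross terms by Young's inequality gives $(L_+f_1,f_1)\geq\tfrac{c}{2}\|f_1\|_{H^1}^2-C\big((\int Qf_1\,dx)^2+(\int\nabla Q\,f_1\,dx)^2+(\int x\cdot\nabla Q\,f_1\,dx)^2\big)$; adding the $L_-$ estimate finishes the proof. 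The step I expect to be the real obstacle is the $L_+$ analysis on $\{Q\}^\perp$ at the critical exponent — establishing non-negativity of the form and identifying $\Lambda Q$ as the sole residual degeneracy via $L_+\Lambda Q=-2Q$ and $\int Q\,\Lambda Q\,dx=0$ — whereas the decompositions and interpolations are routine. Alternatively, one can simply invoke \cite[Proposition 3.17]{CF20}.
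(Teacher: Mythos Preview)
Your sketch is correct and follows the classical Weinstein approach, with the critical-exponent modification that $\Lambda Q$ becomes an additional null direction of the form $(L_+\cdot,\cdot)$ restricted to $\{Q\}^\perp$, via the identities $L_+\Lambda Q=-2Q$ and $\int Q\,\Lambda Q\,dx=0$. The paper, however, does not prove this lemma at all: it is stated in the Appendix purely as a citation of \cite[Proposition~3.17]{CF20}, with no accompanying argument. So there is no ``paper's own proof'' to compare against --- the paper does exactly what your final sentence suggests, namely invoke the reference. Your detailed sketch is thus more than what the paper provides, and since the lemma is used only as a black box (to feed into the coercivity \eqref{H-coercive} of $H(\ve)$), the citation alone suffices for the paper's purposes.
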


\begin{lemma}  (Decoupling lemma)  \label{Lem-decoup}
For every $1\leq k\leq K$,
let
\begin{align}  \label{Gik-def}
  G_{i,k}(t,x) = w_k^{-\frac {2}{p-1}} g_{i}(\frac{x-v_k t - \a_k}{w_k}),\ \ i=1,2,
\end{align}
where $1\leq p\leq 1+ \frac 4d$, $g_i  \in C_b^2$ decays exponentially fast at infinity,
i.e., for some $C_1,\delta_1>0$,
\begin{align} \label{gi-exp-decay}
   |g_i(y)| \leq C_1 e^{-\delta_1|y|},\ \ y\in \bbr^d,\ i=1,2,
\end{align}
the parameters $w_k>0$, $v_k, \a_k\in \bbr^d$,
satisfying that
\begin{align} \label{wkvkak-O1}
  w_k^{-1}+ w_k+|v_k|+|\a_k| \leq C_2.
\end{align}
Then, if  $v_j \not = v_k$, $j\not =k$,
we have that
for any $p_1, p_2 >0$,
\begin{align}  \label{RjRk-decoup}
   \int | G_{1,j}(t)|^{p_1}|G_{2,k}(t)|^{p_2}dx  \leq Ce^{-\delta |v_j-v_k| t},
\end{align}
where $C,\delta>0$ depend on $\delta_1, C_i,p_i$, $i=1,2$.
\end{lemma}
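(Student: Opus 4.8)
The plan is to prove the decoupling estimate \eqref{RjRk-decoup} by a direct geometric argument: for two solitons centered at $v_j t + \a_j$ and $v_k t + \a_k$ with $v_j \neq v_k$, the spatial supports (in the sense of exponential tails) are essentially disjoint on a scale proportional to $|v_j - v_k| t$, so the product is pointwise exponentially small. First I would record that, by \eqref{gi-exp-decay} and the definition \eqref{Gik-def}, for $i=1,2$,
\begin{align*}
   |G_{i,k}(t,x)| \leq C_1 w_k^{-\frac{2}{p-1}} \exp\!\Big(-\delta_1 \frac{|x - v_k t - \a_k|}{w_k}\Big),
\end{align*}
and likewise for $G_{1,j}$ with $k$ replaced by $j$. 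Using the bounds \eqref{wkvkak-O1} on $w_k^{\pm 1}$, the prefactors $w_j^{-\frac{2 p_1}{p-1}}$ and $w_k^{-\frac{2 p_2}{p-1}}$ are bounded by a constant depending only on $C_2, p_1, p_2, p$, so it suffices to estimate
\begin{align*}
   \int \exp\!\Big(-\delta_1 p_1 \frac{|x - v_j t - \a_j|}{w_j} - \delta_1 p_2 \frac{|x - v_k t - \a_k|}{w_k}\Big)\, dx.
\end{align*}

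The key step is the triangle inequality: setting $d_{jk}(t) := |(v_j t + \a_j) - (v_k t + \a_k)|$, for every $x$ we have
\begin{align*}
   \frac{|x - v_j t - \a_j|}{w_j} + \frac{|x - v_k t - \a_k|}{w_k}
   \geq c_0 \big(|x - v_j t - \a_j| + |x - v_k t - \a_k|\big)
   \geq c_0\, d_{jk}(t),
\end{align*}
where $c_0 := (\max\{w_j, w_k\})^{-1} \geq C_2^{-1}$. Splitting the exponent, I would keep half of it to absorb the $d_{jk}(t)$ factor and leave the other half to guarantee integrability: writing $\delta_* := \tfrac12 \delta_1 \min\{p_1, p_2\} c_0$, one gets
\begin{align*}
   \int \exp\!\Big(-\delta_1 p_1 \tfrac{|x - v_j t - \a_j|}{w_j} - \delta_1 p_2 \tfrac{|x - v_k t - \a_k|}{w_k}\Big) dx
   \leq e^{-\delta_* d_{jk}(t)} \int \exp\!\Big(-\tfrac{\delta_1 p_2}{2}\tfrac{|x - v_k t - \a_k|}{w_k}\Big) dx
   \leq C e^{-\delta_* d_{jk}(t)},
\end{align*}
the last integral being a finite constant depending only on $\delta_1, p_2, d$ and $C_2$ (after the change of variables $y = (x - v_k t - \a_k)/w_k$, using $w_k \leq C_2$).

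Finally I would lower-bound $d_{jk}(t)$ linearly in $t$: since $\a_j, \a_k$ are bounded by $C_2$ while $|v_j - v_k| > 0$, for $t$ large
\begin{align*}
   d_{jk}(t) = |(v_j - v_k) t + (\a_j - \a_k)| \geq |v_j - v_k|\, t - 2 C_2 \geq \tfrac12 |v_j - v_k|\, t,
\end{align*}
and for the remaining bounded range of $t$ the whole left-hand side of \eqref{RjRk-decoup} is trivially bounded, so enlarging $C$ covers all $t > 0$. Combining, $\int |G_{1,j}|^{p_1}|G_{2,k}|^{p_2}\,dx \leq C e^{-\delta |v_j - v_k| t}$ with $\delta = \tfrac12 \delta_*$, as claimed. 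I do not anticipate a serious obstacle here; the only point requiring a little care is to track that all constants depend only on $\delta_1, C_1, C_2, p_1, p_2$ and not on the individual parameters $w_k, v_k, \a_k$, which is exactly what the uniform bound \eqref{wkvkak-O1} is there to provide, and to handle the small-$t$ regime separately so that the stated estimate holds for all $t > 0$ rather than only asymptotically.
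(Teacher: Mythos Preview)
Your argument is correct and takes a genuinely different route from the paper. The paper first changes variables $y=(x-v_jt-\a_j)/w_j$ and then splits the domain into $\Omega=\{|y|\leq \frac{1}{2w_j}|v_j-v_k|t\}$ and its complement: on $\Omega$ the argument of $g_2$ is forced to be of size $\gtrsim|v_j-v_k|t$, so $|g_2|^{p_2}$ contributes the exponential decay and $\int|g_1|^{p_1}$ gives integrability, while on $\Omega^c$ the roles are reversed. You instead bound both factors pointwise by their exponential envelopes and use the triangle inequality $|x-v_jt-\a_j|+|x-v_kt-\a_k|\geq d_{jk}(t)$ to extract a uniform factor $e^{-\delta_* d_{jk}(t)}$ from half of the exponent, keeping the other half for integrability in $x$. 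Your approach is slightly slicker in that it avoids any domain decomposition and makes the dependence of the constants on $\delta_1,C_1,C_2,p_1,p_2$ more transparent; the paper's splitting argument, on the other hand, never invokes the full exponential envelope of one of the factors (only its integrability), which can be an advantage if one of the $g_i$ has weaker than exponential decay. Both handle small $t$ the same way, by absorbing the bounded left-hand side into the constant $C$.
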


{\bf Proof.}
We use \eqref{Gik-def} and the change of variables to compute
\begin{align} \label{RjRk-I0}
   \int | G_{1,j}(t)|^{p_1}|G_{2,k}(t)|^{p_2}dx
   =& w_j^{d-\frac{2p_1}{p-1}} w_k^{-\frac{2p_2}{p-1}}
     \int |g_1|^{p_1}(y) |g_2|^{p_2}\(\frac{w_j y + (v_j-v_k) t +(\a_j- \a_k)}{w_k}\) dy   \nonumber \\
   =& w_j^{d-\frac{2p_1}{p-1}} w_k^{-\frac{2p_2}{p-1}}
     \(\int_{\Omega} + \int_{\Omega^c}\) |g_1|^{p_1}(y)  |g_2|^{p_2}
     \(\frac{w_j y + (v_j-v_k) t +(\a_j- \a_k)}{w_k}\) dy \nonumber \\
   =&: I_1 + I_2.
\end{align}
where $\Omega := \{y \in \bbr^d: |y|\leq \frac{1}{2w_j} |v_j - v_k|t\}$
and $\Omega^c= \bbr^d\setminus \Omega$.
On one hand, by \eqref{wkvkak-O1},  for $t$ large enough,
\begin{align*}
   \big| {w_j y + (v_j-v_k) t +(\a_j- \a_k)} \big|
   \geq \frac{1}{2} |v_j -v_k|t -  |\a_j- \a_k|
   \geq \frac 14 |v_j - v_k| t,  \ \ y\in \Omega,
\end{align*}
which along with the exponential decay \eqref{gi-exp-decay}
and $w_j, w_k \geq C_2^{-1}>0$ yields that
\begin{align}   \label{RjRk-I1}
     I_1 \leq  C e^{-\frac{\delta_1 p_2}{4w_k} |v_j-v_k|t} \int g_1^{p_1}(y) dy
   \leq  C e^{-\delta' |v_j - v_k|t},
\end{align}
where $C,\delta'>0$ depend on $\delta_1,  C_i, p_i$, $i=1,2$.

On the other hand,
using \eqref{gi-exp-decay} again we infer that
\begin{align*}
    |g_1(y)| \leq C_1 e^{-\frac{\delta_1}{2w_j}|v_j-v_k|t} , \ \ y\in \Omega^c,
\end{align*}
and thus
\begin{align} \label{RjRk-I2}
  I_2   \leq C e^{-\frac{\delta_1 p_1}{2w_j} |v_j-v_k|t}
   \int_{\Omega}  g_2^{p_2}\(\frac{w_j y + (v_j-v_k) t +(\a_j- \a_k)}{w_k}\) dy
  \leq C  e^{-\delta'' |v_j - v_k|t},
\end{align}
where where $C,\delta_2>0$ depend on $\delta_1,  C_i, p_i$, $i=1,2$.

Therefore, plugging \eqref{RjRk-I1} and \eqref{RjRk-I2} into \eqref{RjRk-I0}
we obtain \eqref{RjRk-decoup} and finish the proof.
\hfill $\square$

Below we present the proof of the geometrical decomposition in Proposition \ref{Prop-dec-un-cri}
in a fashion close to that of \cite{CSZ21}.
Given any $L>0$, $w_k^0\in \R^+$, $x_k^0, v_k\in\R^d$, $\t_k^0\in\R$, $1\leq k\leq K$,
set
\be\label{RL}
R_L(x) : =\sum_{k=1}^{K}R_{k,L}(x)
=\sum_{k=1}^{K}(w_k^0)^{-\frac{2}{p-1}}Q\left(\frac{x-v_kL-x_k^0}{w_{k}^0}\right)
e^{i\(\half v_k\cdot x-\frac{1}{4}|v_k|^2L+(w_{k}^0)^{-2}L+\t_{k}^0\)}.
\ee
Note that, if $L=t$, then $R_{k,L}=R_k$ with $R_k$ given by \eqref{Rk-def}.

\begin{lemma}   \label{Lem-Geo-Decom}
There exists a universal small constant $\delta_*>0$ such that the following holds.
For any $0<r, L^{-1} <\delta_*$  and for any $u\in H^{-1}(\R^d)$ satisfying $\|u-R_L\|_{H^{-1}}\leq r$,
there exist unique $C^1$ functions
$\calp(u)=(\wt\al, \wt\t, \wt w): H^{-1}\to \bbx^K$
such that $u$ admits the decomposition
\be \label{dec-u-Ur-app}
 u  =\sum_{k=1}^{K}(\wt w_k w_k^0)^{-\frac{2}{p-1}}Q\left(\frac{x-v_kL-x_k^0-\wt \al_k}{\wt w_k w_{k}^0}\right)
e^{i\(\half v_k\cdot x-\frac{1}{4}|v_k|^2L+(w_{k}^0)^{-2}L+\t_{k}^0+\wt\t_k\)}+ \ve_L
 \ \ (=: \sum_{k=1}^{K}\wt R_{k,L}+\ve_L),
\ee
and the following orthogonality conditions hold: for $1\leq k\leq K$,
\be\ba \label{ortho-cond-R-ve-app}
& {\rm Re}\ _{H^{1}}\langle  \nabla \wt R_{k,L}, \ve_L\rangle_{H^{-1}}=0,\ \
   {\rm Im}\ _{H^{1}}\langle \wt R_{k,L}, \ve_L \rangle_{H^{-1}}=0,\\
& {\rm Re}\ _{H^{1}}\langle  \frac{d}{2}\wt R_{k,L}+y_k\cdot  \nabla \wt R_{k,L} - \frac{i}{2}v_k\cdot y_k \wt R_{k,L}, \ve_L \rangle_{H^{-1}}=0,
\ea\ee
where $y_k :=x-v_kL-x_k^0-\wt \al_k$.
Moreover, there exists a universal constant $C>0$ such that,
\be\label{3}
\|\varepsilon_L\|_{H^{-1}}+\sum_{k=1}^{K}(|\wt \al_k|+|\wt\t_k|+|\wt w_k-1|)\leq C\|u-R_L\|_{H^{-1}}.
\ee
\end{lemma}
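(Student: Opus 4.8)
The plan is to prove Lemma \ref{Lem-Geo-Decom} by a standard application of the implicit function theorem, adapted to the multi-soliton setting as in \cite{CSZ21,MMT06}. First I would set up the map whose zeros encode the orthogonality conditions. For $\calp = (\wt\al_k, \wt\t_k, \wt w_k)_{k=1}^K \in \bbx^K$ in a small neighborhood of $\calp^0 := (0,0,1)_{k=1}^K$, and for $u \in H^{-1}$, write
\[
  \ve_L[\calp, u] := u - \sum_{k=1}^K \wt R_{k,L}[\calp],
\]
where $\wt R_{k,L}[\calp]$ is the modulated soliton appearing in \eqref{dec-u-Ur-app}, and define $F = (F^k)_{k=1}^K : \bbx^K \times H^{-1} \to \bbr^{d+2\cdot\,}$ componentwise by pairing $\ve_L[\calp,u]$ against the three families of test functions $\nabla \wt R_{k,L}$, $i\wt R_{k,L}$, and $\frac d2 \wt R_{k,L} + y_k\cdot\nabla\wt R_{k,L} - \frac i2 v_k\cdot y_k \wt R_{k,L}$, taking real or imaginary parts as in \eqref{ortho-cond-R-ve-app}. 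These are smooth ($C^1$, indeed $C^\infty$) functions of $\calp$ with values in $H^1$, so $F$ is $C^1$ in $\calp$ jointly with its dependence on $u$; and $F[\calp^0, R_L] = 0$ since $\ve_L = 0$ there and the self-pairings vanish by parity of $Q$.

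Second, I would compute the Jacobian $\partial F / \partial \calp$ at the base point $(\calp^0, R_L)$. Differentiating $\wt R_{k,L}$ in the parameters produces, at $\calp = \calp^0$, the generators $\nabla Q_{w_k^0}$ (from $\wt\al_k$), $iQ_{w_k^0}$ (from $\wt\t_k$), and $\Lambda_k Q_{w_k^0}$ (from $\wt w_k$), each multiplied by the phase factor. Since $R_L$ is itself a sum of solitons with pairwise distinct velocities $v_j \neq v_k$, the cross terms $j \neq k$ are exponentially small in $L$ by the decoupling Lemma \ref{Lem-decoup}. Hence $\partial F/\partial\calp$ is, up to an $\calo(e^{-\delta L})$ perturbation, block-diagonal, with the $k$-th block a fixed $(d+2)\times(d+2)$ matrix of inner products among $\{\nabla Q_{w_k^0}, iQ_{w_k^0}, \Lambda_k Q_{w_k^0}\}$ and the three test functions. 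A direct computation using the radial symmetry of $Q$ (so that $\langle \nabla Q, Q\rangle = 0$, $\langle \Lambda Q, \nabla Q\rangle = 0$, etc.) shows this block is nondegenerate — this is exactly \eqref{Jacob-posit} referenced in the subcritical remark. Therefore, for $L$ large enough (so the off-diagonal error is small relative to the diagonal blocks), $\partial F/\partial\calp$ is invertible with inverse bounded uniformly in $L$ and in $u$ near $R_L$.

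Third, the implicit function theorem then yields a unique $C^1$ map $u \mapsto \calp(u)$ defined on a neighborhood $\{\|u - R_L\|_{H^{-1}} \le r\}$, with $\calp(R_L) = \calp^0$, such that $F[\calp(u), u] = 0$; this gives the decomposition \eqref{dec-u-Ur-app} and the orthogonality conditions \eqref{ortho-cond-R-ve-app}. Uniqueness of $\calp(u)$ in a neighborhood of $\calp^0$ comes from the local invertibility; to promote it to uniqueness in the stated full neighborhood one shrinks $\delta_*$ so that the a priori bound forces $\calp(u)$ to stay in the domain where the IFT applies. Finally, the Lipschitz estimate \eqref{3} follows from the quantitative IFT: $|\calp(u) - \calp^0| \le C\|F[\calp^0,u] - F[\calp^0,R_L]\| = C\|\,\langle u - R_L, \text{test fns}\rangle\,\| \le C\|u-R_L\|_{H^{-1}}$ using that the test functions lie in $H^1$ with uniformly bounded norms, and then $\|\ve_L\|_{H^{-1}} \le \|u - R_L\|_{H^{-1}} + \|\sum_k(\wt R_{k,L}[\calp(u)] - R_{k,L})\|_{H^{-1}} \le C\|u-R_L\|_{H^{-1}}$ by the mean value theorem applied to the smooth map $\calp \mapsto \wt R_{k,L}[\calp]$.

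The main obstacle, and the only place real work is needed, is the uniform-in-$L$ invertibility of the Jacobian: one must verify that the diagonal blocks are genuinely nondegenerate (a finite computation with $Q$ and $\Lambda Q$, exploiting radiality and the algebra of the generators) and that the decoupling estimate from Lemma \ref{Lem-decoup} controls the off-diagonal blocks uniformly, so that the constants in the IFT — hence $\delta_*$ and $C$ in \eqref{3} — can be chosen universal, independent of $L$ and of the admissible parameters $w_k^0, x_k^0, v_k, \t_k^0$. Everything else is routine; I would relegate the explicit Jacobian computation to match the presentation in \cite{CSZ21}.
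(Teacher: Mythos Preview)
Your proposal is correct and follows essentially the same approach as the paper: set up the orthogonality functionals, verify uniform nondegeneracy of the Jacobian via the explicit diagonal computation plus the decoupling Lemma~\ref{Lem-decoup} for the off-diagonal blocks, and then apply the implicit function theorem. The only presentational difference is that the paper, instead of invoking a quantitative IFT directly, establishes the Jacobian bound \eqref{Jacob-posit} on the full ball, derives the Lipschitz estimate via the mean value theorem, and then runs an open--closed connectedness argument to propagate existence to the entire ball $B_{\delta_*/C_*}(R_L)$.
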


\begin{proof}
The proof proceeds in four steps.

{\it Step 1.}
Set
$\wt \calp_{0,k}: =(0,0,1) \in \mathbb{X}$
and $\wt \calp_0 = (\wt \calp_{0,1}, \cdots, \wt \calp_{0,K})\in \mathbb{X}^K$.
Similarly,
let $\wt \calp_k:= (\wt \a_k, \wt \theta_k,\wt w_k) \in \mathbb{X}$,
$\wt \calp:= (\wt \calp_1, \cdots, \wt \calp_K) \in \mathbb{X}^K$.
Let
\be
\al_k : =\wt\al_k+x_k^0,\quad \t_k:=\wt \t_k+\t_k^0,\quad w_k:=\wt w_kw_k^0.
\ee
For any $u_0\in H^1$, let $B_\delta(u_0, \wt \calp_0)$ denote
the closed ball centered at $(u_0, \wt\calp_0)$ of radius $\delta$,
i.e.,
\begin{align}
  B_\delta(u_0, \wt \calp_0) :=\{(u, \wt \calp) \in H^{-1}\times \mathbb{X}^K:\ \|u-u_0\|_{H^{-1}}\leq \delta,\ \ |\wt \calp- \wt \calp_0|\leq \delta\},
\end{align}
where $\delta$ is a small constant to be chosen later,
and
\begin{align}
   |\wt \calp-\wt \calp_0|:=
   \sum\limits_{k=1}^K |\wt \calp_k- \wt \calp_{0,k}|
   =\sum_{k=1}^{K}(|\wt \a_k|+|\wt \t_k|+|\wt w_k-1|).
\end{align}

For $1\leq k\leq K$, let
\begin{align*}
 & f_{1,j}^k(u, \wt\calp) :={\rm Re}\ _{H^{1}}\langle \partial_j \wt R_{k,L}, \ve_L \rangle_{H^{-1}}, \ \ 1\leq j\leq d, \\
 & f_{2}^k(u,\wt\calp) :={\rm Im}\ _{H^{1}} \langle \wt R_{k,L},  \ve_L,  \rangle_{H^{-1}},  \\
 & f_{3}^k(u, \wt\calp )
   :={\rm Re}\ _{H^{1}}\langle \frac{2}{p-1}\wt R_{k,L}+y_k\cdot  \nabla \wt R_{k,L} - \frac{i}{2}v_k\cdot y_k \wt R_{k,L}, \ve_L  \rangle_{H^{-1}},
\end{align*}
where $y_k$ is as in \eqref{ortho-cond-R-ve-app}.
Let $F^k:= (f^k_{1,1}, \cdots, f^k_{1,d}, f^k_2, f^k_{3})$ and
$\frac{\partial F^k}{\partial  \wt\calp_j}$ denote the Jacobian matrix
\begin{align}
\frac{\partial F^k}{\partial  \wt\calp_j}:=\left(
     \begin{array}{ccc}
        \frac{\partial f^k_{1,1}}{\partial \wt\a_{j,1}} &\cdots \ \frac{\partial f^k_{1,1}}{\partial \wt\a_{j,d}},\ \frac{\partial f^k_{1,1}}{\partial \wt\t_j}, &
         \frac{\partial f^k_{1,1}}{\partial \wt w_j} \\
         \vdots &  &  \vdots \\
        \frac{\partial f^k_{3}}{\partial \wt\a_{j,1}} & \cdots \ \frac{\partial f^k_{3}}{\partial \wt\a_{j,d}},\ \frac{\partial f^k_{3}}{\partial \wt\t_j},&
         \frac{\partial f^k_{3}}{\partial \wt w_j} \\
     \end{array}
   \right), \ \ 1\leq j, k\leq K,
\end{align}
where $\wt \a_j:=(\wt \a_{j,l}, 1\leq l\leq d)\in \bbr^d$.
Similarly,
let $F:=(F^1, \cdots, F^K)$
and $\frac{\partial F}{\partial \wt  \calp} := (\frac{\partial F^k}{\partial \wt \calp_j})_{1\leq j,k\leq K}$.

Note that,
by the definition \eqref{RL} of $R_L$,
$ F^k(R_L, \wt\calp_0)=0$,
$1\leq k\leq K$.
Moreover, for any $(u, \wt \calp)\in B_\delta(R_L, \wt \calp_0)$,
we have that,
if $\wt R_L:= \sum_{k=1}^K \wt R_{k,L}$,
\begin{align} \label{R-wtR-SLUL}
\|\varepsilon_L\|_{H^{-1}} \leq \|u-R_L\|_{H^{-1}} + \|R_L - \wt R_L\|_{H^{-1}}.
\end{align}
By the explicit expressions of $R_L$ and $\wt R_L$
in \eqref{RL} and \eqref{dec-u-Ur-app}, respectively,
\begin{align} \label{SL-UL-diff}
 \|R_L - \wt R_L\|_{H^{-1}}
 \leq \|R_L - \wt R_L\|_{L^2}
\leq C\sum_{k=1}^{K}(|\wt\al_k|+|\wt\t_k|+|\wt w_k-1|)\leq C |\wt \calp - \wt \calp_0|,
\end{align}
where $C>0$. Thus,
we get that for a universal constant $\wt C>0$,
\begin{align} \label{R-wtR-PP0}
\|\varepsilon_L\|_{H^{-1}}\leq \wt C (\|u-R_L\|_{H^{-1}} + |\wt \calp - \wt \calp_0|)
           \leq 2 \wt C \delta,\ \ \forall (u, \wt \calp)\in B_\delta(R_L, \wt \calp_0).
\end{align}

{\it Step 2.}
We claim that, there exist small constants $\delta_*, c_1, c_2>0$
such that for any $0<\delta, L^{-1} \leq \delta_*$,
\begin{align} \label{Jacob-posit}
   0<c_1\leq \bigg|\det \frac{\partial F}{\partial \wt \calp}(u, \wt \calp) \bigg| \leq c_2<\9, \ \ \forall (u, \wt \calp)\in B_{\delta}(R_L, \wt \calp_0).
\end{align}

To this end, we compute that for $1\leq j,k\leq d$,
\be \ba  \label{F-P-Jacob}
  & \pa_{\wt\al_{k,j}} f^k_{1,j}  = - w_k^{-2} \|\partial_j Q_{w_k}\|_{L^2}^2+\calo(\|\varepsilon_L\|_{H^{-1}}), \  \
    \pa_{\wt\t_k} f_{1,j}^k   = - \frac{ v_{k,j}}{2}\|Q_{w_k}\|_{L^2}^2+\calo(\|\varepsilon_L\|_{H^{-1}}) , \\
  & \pa_{\wt\t_k} f^k_{2}  = \|Q_{w_k}\|_{L^2}^2+\calo(\|\varepsilon_L\|_{H^{-1}}), \ \
    \pa_{\wt w_k} f_3^k  =  w_k ^{-1} \|\Lambda Q_{w_k}\|_{L^2}^2+\calo(\|\varepsilon_L\|_{H^{-1}}).
\ea \ee
Moreover,
by the exponential decay of $Q$,
we infer that,
there exists $\delta>0$ such that
the other terms in the Jacobian matrices are of the order
$\calo(\|\ve\|_{H^{-1}} + e^{-\delta L})$.
This yields that
\begin{align}
   \bigg|\det \(\frac{\partial F}{\partial \wt \calp} \)\bigg|
   = \prod\limits_{k=1}^K \( (w_k^0)^{-2d} \wt w_k^{-2d-1} \|Q_{w_k}\|_{L^2}^2
      \|\Lambda Q_{w_k}\|_{L^2}^2 \prod\limits_{j=1}^d \|\partial_j Q_{w_k}\|_{L^2}^2  \)
      + \calo\(\|\ve\|_{H^{-1}} + e^{-\delta L}\).
\end{align}
Taking into account $|\wt \calp - \wt \calp_0| \leq \delta$
we obtain \eqref{Jacob-posit}, as claimed.

{\it Step 3.}
In this step, we claim that
there exists a universal constant $C_*(\geq1)$ such that, for any
$0<\delta, L^{-1} \leq \delta_*$
and  any
$(u_1, \wt \calp(u_1)), (u_2, \wt \calp(u_2))\in B_\delta(R_L, \wt \calp_0 )$,
if $F(u_1, \wt \calp(u_1))= F(u_2, \wt \calp(u_2))=0$,
then
\begin{align} \label{Pv1-Pv2}
|\wt \calp(u_1)-\wt \calp(u_2)|\leq C_* \|u_1-u_2\|_{H^{-1}}.
\end{align}

To this end,  we infer that
\begin{align} \label{Fv12-Fv21}
F(u_1, \wt \calp(u_1))-F(u_1, \wt \calp(u_2))= F(u_2, \wt \calp(u_2) ) - F(u_1, \wt \calp(u_2)).
\end{align}
By the differential mean value theorem,
\begin{align} \label{Fv12-pFpP}
   \(\frac{\partial F}{\partial \wt \calp}(u_1, \wt \calp_{r})\) (\wt \calp(u_1)-\wt \calp(u_2))^t
   = (F(u_2, \wt \calp(u_2))-F(u_1,\wt  \calp(u_2)))^t,
\end{align}
where $\wt \calp_r=r\wt \calp(u_1)+(1-r)\wt \calp(u_2)$
for some $0<r<1$,
and the superscript $t$ means the transpose of matrices.
Since the Jacobian matrix
$\frac{\partial F}{\partial \wt \calp}(u_1, \wt \calp_r)$ is invertible by  \eqref{Jacob-posit},
this leads to
\begin{align} \label{Fv12-Fv21-A}
   (\wt \calp(u_1)-\wt \calp(u_2))^t= \(\frac{\partial F}{\partial \wt \calp}(u_1,\wt \calp_r) \)^{-1} (F(u_2, \wt \calp(u_2)) - F(v_1, \wt \calp(u_2)))^t.
\end{align}
Note that, by \eqref{F-P-Jacob},
there exists a universal constant $C>0$ such that
\begin{align} \label{Fp-Jacob-invert}
   \bigg\|\bigg(\frac{\partial F}{\partial \wt \calp}(u_1, \wt \calp_r)\bigg)^{-1} \bigg\|\leq C,
\end{align}
where $\|\cdot\|$ denotes the Hilbert-Schmidt norm of matrices.
Moreover,
by  \eqref{Q-decay},
\begin{align} \label{f-v2v1-diff}
&|F(u_2, \wt \calp(u_2)) - F(u_1, \wt \calp(u_2))|\leq C \|u_2-u_1\|_{H^{-1}}.
\end{align}

Thus, we infer from \eqref{Fv12-Fv21-A}, \eqref{Fp-Jacob-invert}
and \eqref{f-v2v1-diff} that \eqref{Pv1-Pv2} holds, as claimed.

{\it Step 4.}
Let $\delta_*, C_*$ be the universal constants
as in Step 1 and Step 2, respectively,
and set
\begin{align}
B:=\{ v\in B_{\frac{\delta_*}{C_*}}(R_L): \exists \wt \calp \in B_{\delta_*}(\wt \calp_0),
     \ such\ that\ F(v,\wt \calp) =0\}.
\end{align}
Since $B_{\frac{\delta_*}{C_*}}(R_L)$ is connected
and $R_L \in B$,
in order to prove that
\begin{align} \label{B-BSL}
   B = B_{\frac{\delta_*}{C_*}}(R_L).
\end{align}
we only need to show that
$B$ is both open and closed in $B_{\frac{\delta_*}{C_*}}(R_L)$.

To this end,
For any $u\in B$, by definition
there exists $\wt \calp(u) \in B_{\delta_*}(\wt \calp_0)$ such that $F(u,\wt \calp(u))=0$.
Taking into account the non-degeneracy of the Jacobian matrix at $(u, \wt \calp(u))$
due to \eqref{Jacob-posit},
we can apply the implicit function theorem
to get a small open neighborhood $\calu (u)$ of $u$ in $B_{\frac{\delta_*}{C_*}}(R_L)$
such that $\calu (u) \subseteq B$.
This yield that $B$ is open in $B_{\frac{\delta_*}{C_*}}(R_L)$.

Moreover,
for any sequence $\{u_n\} \subseteq B$
such that $u_n \to u_*$ in $H^{-1}$ for some $u_*\in B_{\frac{\delta_*}{C_*}}(R_L)$,
by definition
there exist modulation parameters $\wt \calp(u_n)\in  B_{\delta_*}(\wt \calp_0)$
such that $F(u_n, \wt \calp(u_n))=0$,  $n \geq 1$.
In particular,
$\{\wt \calp(v_n)\} \subseteq \mathbb{X}^K$ is uniformly bounded
and so, along a subsequence (still denoted by $\{n\}$),
$\wt \calp(v_{n}) \to \wt \calp_*\ (\in B_{\delta_*}(\wt \calp_0))$
for some $\wt \calp_* \in \mathbb{X}^K$.

Then, let $\wt R_{k,L, \wt \calp(u_n)}$ and $\wt R_{k,L,\wt \calp_*}$
be the $k$-th soliton profiles corresponding to $\wt \calp(u_n)$
and $\wt \calp_*$, respectively.
By the above convergence of $u_n$ and $\wt \calp(u_n)$
we infer that $u_{n} - \sum_{k=1}^K \wt R_{k,L, \wt \calp(u_{n})}  \to u_* - \sum_{k=1}^K \wt R_{k,L,\wt \calp_*}$ in $H^{-1}$.
Taking  $n\to \9$ and using the fact that $F(u_{n}, \wt \calp(u_{{n}})) =0$
we obtain
$F(u_*, \wt \calp_*) = 0$,
and so $u_*\in B$.
Hence,
$B$ is also closed in $B_{\frac{\delta_*}{C_*}}(R_L)$.

Therefore, \eqref{B-BSL} is verified.
The geometrical decomposition \eqref{dec-u-Ur-app}
and the orthogonality conditions in \eqref{ortho-cond-R-ve-app} hold.
Moreover, estimate \eqref{3} follows from \eqref{R-wtR-PP0} and  \eqref{Pv1-Pv2}
by taking $u_1= u$ and $u_2 = R_L$.
The proof of Lemma \ref{Lem-Geo-Decom} is complete.
\end{proof}

{\bf Proof of Proposition \ref{Prop-dec-un-cri}.}
Since $u(T)=R(T)$,
by the local wellposedness theory,
there exists $T^*$ close to $T$, such that
$u(t)\in C^1([T^*,T]; H^{-1}) \bigcap C([T^*,T];H^1)$
and $\|u(t)-R(T)\|_{H^{1}} \in B_{\delta}(u(T))$ for all $t\in[T^*,T]$,
where $\delta>0$ is as in Lemma \ref{Lem-Geo-Decom}.

Hence, applying Lemma \ref{Lem-Geo-Decom} to $\{u(t)\}$
we obtain that for $T$ large enough,
there exist unique $C^1$ functions $(\al_k,\t_k,\omega_k) \in C^1([T^*,T]; \mathbb{X}^K)$,
$1\leq k\leq K$,
such that
for any $t\in [T^*,T]$,
$u(t)$ admits the decomposition \eqref{dec-u-Ur-app}
and the orthogonality conditions in \eqref{ortho-cond-R-ve-app} hold
with $t$ replacing $T$.

Then, taking into account $u(t)\in H^1$
and \eqref{dec-u-Ur-app}, the remainder $\ve(t)$ is indeed in the space $H^1$.
Thus, the parings between $H^{-1}$ and $H^1$ in \eqref{ortho-cond-R-ve-app} are exactly the $L^2$ inner products,
which yields the orthogonality conditions in \eqref{ortho-cond-R-ve-cri} for any $[T^*,T]$.
Therefore, the proof is complete.
\hfill $\square$

We close this section with the proof of  \eqref{4}. 
For this purpose, we set
$\wt S_k:= \sum_{j=k}^K \wt R_j$, $1\leq k\leq K$.
Then,
\begin{align}  \label{wtSk-wtSK1}
   \wt S_k = \wt R_k + \wt S_{k+1},\ \ 1\leq k\leq K-1.
\end{align}

\begin{lemma}   \label{Lem-Sk-Rk-q}
Let $0<q<\9$, we have
\be
\left||\wt S_k |^q-|\wt R_k|^q\right|
\leq C h(\wt S_{k+1}),
\ee
where $C>0$,
$h(\wt S_{k+1}) = |\wt S_{k+1}|^q$ if $0<q<1$,
and
$h(\wt S_{k+1}) = |\wt S_{k+1}|$ if $1\leq q<\9$.
\end{lemma}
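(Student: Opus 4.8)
The plan is to reduce the pointwise bound to two elementary inequalities for the scalar map $s\mapsto s^q$ on $[0,\infty)$, applied after the trivial rewriting $\wt S_k=\wt R_k+\wt S_{k+1}$ from \eqref{wtSk-wtSK1}. The only structural input needed is that all the soliton profiles are uniformly bounded: from \eqref{Rk-Qk-cri} (in the subcritical case \eqref{Rk-Qk-sub}), the bounds \eqref{B*-wk-ak-deter-bdd} on $w_k$ and $w_k^{-1}$, and the exponential decay \eqref{Q-decay} of $Q$, one has $\|\wt R_j\|_{L^\infty}\le C_0$ for every $1\le j\le K$ with $C_0$ deterministic, hence $\|\wt R_k\|_{L^\infty}+\|\wt S_{k+1}\|_{L^\infty}\le C_1$ on $[T^*,T]$.

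For $0<q<1$ I would invoke the subadditivity estimate $\big||a|^q-|b|^q\big|\le|a-b|^q$, valid for all $a,b\in\mathbb{C}$ because $s\mapsto s^q$ is concave on $[0,\infty)$ and vanishes at $0$ (reduce to the scalar case $\big||a|^q-|b|^q\big|\le\big||a|-|b|\big|^q$ and then use the reverse triangle inequality together with monotonicity of $s\mapsto s^q$); taking $a=\wt S_k=\wt R_k+\wt S_{k+1}$ and $b=\wt R_k$ yields $\big||\wt S_k|^q-|\wt R_k|^q\big|\le|\wt S_{k+1}|^q$ pointwise, which is the claim with $C=1$. For $1\le q<\infty$ I would instead use the mean value theorem for $s\mapsto s^q$ to get $\big||a|^q-|b|^q\big|\le q\,(|a|+|b|)^{q-1}\big||a|-|b|\big|\le q\,(|a|+|b|)^{q-1}|a-b|$; with the same choice of $a,b$ this gives $\big||\wt S_k|^q-|\wt R_k|^q\big|\le q\,(2|\wt R_k|+|\wt S_{k+1}|)^{q-1}|\wt S_{k+1}|\le q\,(2C_1)^{q-1}|\wt S_{k+1}|$, again the desired bound.

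I do not expect a genuine obstacle in this lemma; the only point that requires a moment's care is the case $q\ge 1$, where the factor $(|a|+|b|)^{q-1}$ must be absorbed into the constant, and this is precisely where the uniform bound $C_1$ — and hence \eqref{B*-wk-ak-deter-bdd} — is used to keep $C$ deterministic and independent of $\omega$.
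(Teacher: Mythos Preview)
Your proof is correct and follows essentially the same approach as the paper's: for $0<q<1$ both rely on the subadditivity inequality $(a+b)^q\le a^q+b^q$ (your formulation $\big||a|^q-|b|^q\big|\le|a-b|^q$ is an equivalent restatement), and for $q\ge 1$ both use a mean-value/factorization estimate together with the uniform $L^\infty$ bound on the $\wt R_j$ to absorb the prefactor into the constant. Your explicit reference to \eqref{B*-wk-ak-deter-bdd} to justify that $C$ is deterministic is a nice touch that the paper leaves implicit.
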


{\bf Proof.}
The case where $0<q<1$ follows from the inequality
\begin{align*}
(a+b)^{q}\leq a^q+b^q, \ \ a,b \geq 0,
\end{align*}
while  the case $1\leq q<\9$ follows from the inequality
\begin{align*}
    ||\wt S_k |^q-|\wt R_k|^q |
   \leq C(|\wt S_{k+1}|^{q-1} + |\wt R_{k}|^{q-1}) |\wt S_{k+1}|
\end{align*}
and the uniform boundedness of $\wt R_j$, $1\leq j\leq K$.
\hfill $\square$

\begin{lemma}   \label{Lem-Skp1-Rkp1-exp}
There exists $\delta >0$ such that
\begin{align}  \label{Skp1-Rkp1-exp}
\int |\wt S_{k}|^{p+1}-|\wt R_k|^{p+1}-|\wt S_{k+1}|^{p+1}dx=\calo(e^{-\delta t}).
\end{align}
\end{lemma}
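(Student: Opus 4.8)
The plan is to exploit the decoupling between $\wt R_k$ and $\wt S_{k+1}$, which are built from ground states traveling with pairwise distinct velocities. Using \eqref{wtSk-wtSK1} to write $\wt S_k = \wt R_k + \wt S_{k+1}$, the integrand becomes $|\wt R_k + \wt S_{k+1}|^{p+1} - |\wt R_k|^{p+1} - |\wt S_{k+1}|^{p+1}$, i.e.\ a pure ``cross term''. A plain triangle-inequality estimate is too crude here, since $\int |\wt S_{k+1}|^{p+1}\,dx$ is itself of order one; instead I would use the elementary pointwise inequality
\[
   \big| |z+w|^{p+1} - |z|^{p+1} - |w|^{p+1} \big| \leq C\big( |z|^{p}\,|w| + |z|\,|w|^{p} \big), \qquad z,w\in\C,
\]
which is valid for $p\geq 1$, in particular for $1<p\leq 1+\frac4d$. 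This follows by applying the mean value theorem to $s\mapsto s^{p+1}$ together with $\big||z+w|-|z|\big|\leq |w|$: if $|z|\geq|w|$ one bounds $\big||z+w|^{p+1}-|z|^{p+1}\big|\leq C(|z|+|w|)^{p}|w|\leq C|z|^{p}|w|$ and $|w|^{p+1}\leq |z|^{p}|w|$, and the case $|z|<|w|$ is symmetric.

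Applying this bound pointwise with $z=\wt R_k(t)$, $w=\wt S_{k+1}(t)$ and integrating over $\bbr^d$, it remains to show that $\int |\wt R_k|^{p}\,|\wt S_{k+1}|\,dx$ and $\int |\wt R_k|\,|\wt S_{k+1}|^{p}\,dx$ are $\calo(e^{-\delta t})$. Since $\wt S_{k+1}=\sum_{j=k+1}^{K}\wt R_j$, one has $|\wt S_{k+1}|\leq\sum_{j=k+1}^{K}|\wt R_j|$ and, by convexity, $|\wt S_{k+1}|^{p}\leq C\sum_{j=k+1}^{K}|\wt R_j|^{p}$. Hence both integrals are dominated by a finite sum of terms $\int |\wt R_k|^{p_1}|\wt R_j|^{p_2}\,dx$ with $j\neq k$ and $p_1,p_2>0$.

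Finally, by the explicit form \eqref{Rk-Qk-cri} one has $|\wt R_j(t,x)| = Q_{w_j(t)}\big(x-v_jt-\alpha_j(t)\big)$, and on $[T^*,T]$ the parameters satisfy $w_j^{-1}+w_j+|v_j|+|\alpha_j|\leq C$ by \eqref{B*-wk-ak-deter-bdd}; thus each such integral is precisely of the type handled by the decoupling Lemma \ref{Lem-decoup} (taking $g_1=g_2=Q$, which is smooth and exponentially decaying by \eqref{Q-decay}, so that $|\wt R_j|$ is the function $G_{\cdot,j}$ there). This gives $\int |\wt R_k|^{p_1}|\wt R_j|^{p_2}\,dx\leq Ce^{-\delta|v_k-v_j|t}$ for $j\neq k$, and summing over $j$ and using $\min_{j\neq k}|v_k-v_j|>0$ (because $v_j\neq v_k$ for $j\neq k$) yields the claimed estimate $\calo(e^{-\delta t})$. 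The only mildly delicate point — the ``hard part'' of an otherwise routine computation — is to invoke the refined cross-term inequality rather than a naive triangle bound, so that the full expression, and not merely each of its three pieces, is recognized as exponentially small; everything else reduces to the decoupling lemma and the uniform control of the modulation parameters.
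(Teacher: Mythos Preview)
Your proof is correct and follows essentially the same strategy as the paper: reduce the cross term to products $|\wt R_k|^{p_1}|\wt R_j|^{p_2}$ with $j\neq k$ and apply the decoupling Lemma~\ref{Lem-decoup}. The only difference is in the pointwise estimate used to extract the cross term: you invoke the direct inequality $\big||z+w|^{p+1}-|z|^{p+1}-|w|^{p+1}\big|\leq C(|z|^p|w|+|z||w|^p)$, whereas the paper writes $|\wt S_k|^{p+1}=|\wt S_k|^{p-1}|\wt S_k|^2$, expands $|\wt S_k|^2$, and then bounds $\big||\wt S_k|^{p-1}-|\wt R_k|^{p-1}\big|$ and $\big||\wt S_k|^{p-1}-|\wt S_{k+1}|^{p-1}\big|$ via Lemma~\ref{Lem-Sk-Rk-q}, which requires distinguishing the cases $p-1<1$ and $p-1\geq 1$. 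Your route is slightly more streamlined here since $p+1\geq 2$ puts you in the easy regime for the mean value theorem and avoids that case split; the paper's route, on the other hand, sets up Lemma~\ref{Lem-Sk-Rk-q} once and reuses it in the subsequent Lemmas~\ref{Lem-Sk1-Rk-ve-exp}--\ref{Lem-Skp3-Rkp3-ve2}.
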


{\bf Proof.}
Using the expansion
\begin{align*}
    |\wt S_k|^2 = |\wt R_k|^2 + |\wt S_{k+1}|^2 + 2 {\rm Re} (\wt R_k \wt S_{k+1}),
\end{align*}
and Lemmas \ref{Lem-decoup} and \ref{Lem-Sk-Rk-q}
we have
\begin{align*}
   & \big| \int |\wt S_k|^{p+1}-|\wt R_k|^{p+1}-|\wt S_{k+1}|^{p+1}dx \big| \nonumber \\
   \leq& \int \big||\wt S_k|^{p-1}-|\wt R_k|^{p-1}\big| |\wt R_k|^2
            +\big||\wt S_k|^{p-1}-|\wt  S_{k+1}|^{p-1}\big||\wt S_{k+1}|^2
            +2|\wt  S_{k}|^{p-1}|\wt R_k\wt S_{k+1}|dx \\
    \leq& C \int    h(\wt S_{k+1}) |\wt R_k|^2
                  + h(\wt R_k) |\wt S_{k+1}|^2
+2|\wt  S_{k}|^{p-1} |\wt R_k \wt S_{k+1}|dx\\
    \leq& Ce^{-\delta t},
\end{align*}
which yields \eqref{Skp1-Rkp1-exp}.
\hfill $\square$

\begin{lemma}  \label{Lem-Sk1-Rk-ve-exp}
There exists $\delta >0$ such that
\begin{align}  \label{Sk1-Rk-ve-exp}
 \int (| \wt S_{k}|^{p-1}  \wt S_{k} - | \wt R_k|^{p-1}  \wt R_k-
        | \wt S_{k+1}|^{p-1}  \wt S_{k+1}) \ol{\ve}dx
        =\calo(e^{-\delta t} \|\ve\|_{L^2}).
\end{align}
\end{lemma}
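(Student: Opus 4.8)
The plan is to argue exactly as in the proof of Lemma \ref{Lem-Skp1-Rkp1-exp}. First, by the Cauchy--Schwarz inequality, the left-hand side of \eqref{Sk1-Rk-ve-exp} is bounded by $\big\|\,|\wt S_k|^{p-1}\wt S_k-|\wt R_k|^{p-1}\wt R_k-|\wt S_{k+1}|^{p-1}\wt S_{k+1}\,\big\|_{L^2}\,\|\ve\|_{L^2}$, so it suffices to show that the first factor is $\calo(e^{-\delta t})$ for some $\delta>0$. Using the splitting $\wt S_k=\wt R_k+\wt S_{k+1}$ from \eqref{wtSk-wtSK1}, I would rewrite the integrand as
\begin{align*}
   |\wt S_k|^{p-1}\wt S_k-|\wt R_k|^{p-1}\wt R_k-|\wt S_{k+1}|^{p-1}\wt S_{k+1}
   =\big(|\wt S_k|^{p-1}-|\wt R_k|^{p-1}\big)\wt R_k+\big(|\wt S_k|^{p-1}-|\wt S_{k+1}|^{p-1}\big)\wt S_{k+1}.
\end{align*}

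Next I would apply Lemma \ref{Lem-Sk-Rk-q} with $q=p-1$ to each term (for the second term one uses the version of that lemma obtained by interchanging the roles of $\wt R_k$ and $\wt S_{k+1}$, which is legitimate since $\wt S_k=\wt S_{k+1}+\wt R_k$ as well, and since the $\wt R_j$ are uniformly bounded on $[T^*,T]$). This yields the pointwise bound
\begin{align*}
   \big|\,|\wt S_k|^{p-1}\wt S_k-|\wt R_k|^{p-1}\wt R_k-|\wt S_{k+1}|^{p-1}\wt S_{k+1}\,\big|
   \leq C\big(h(\wt S_{k+1})\,|\wt R_k|+h(\wt R_k)\,|\wt S_{k+1}|\big),
\end{align*}
where $h(z)=|z|^{p-1}$ when $1<p<2$ and $h(z)=|z|$ when $p\geq 2$. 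Since $\wt S_{k+1}=\sum_{j>k}\wt R_j$ with each $\wt R_j$ uniformly bounded, I would further bound $h(\wt S_{k+1})\leq C\sum_{j>k}h(\wt R_j)$ and $|\wt S_{k+1}|\leq\sum_{j>k}|\wt R_j|$, thereby reducing matters to the $L^2$ estimates of the products $h(\wt R_j)\,|\wt R_k|$ and $|\wt R_j|\,h(\wt R_k)$ for the finitely many $j\neq k$.

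For these products I would invoke the decoupling Lemma \ref{Lem-decoup}: here $|\wt R_l|=Q_{w_l}(\cdot-v_lt-\a_l)$ is exactly of the form $G_{i,l}$ in \eqref{Gik-def} with $g_i=Q$, which decays exponentially by \eqref{Q-decay}, while $w_l,w_l^{-1},|v_l|,|\a_l|$ are uniformly bounded on $[T^*,T]$ by \eqref{B*-wk-ak-deter-bdd} and $v_j\neq v_k$. Hence $\int|\wt R_j|^{2p_1}|\wt R_k|^{2p_2}\,dx\leq Ce^{-\delta|v_j-v_k|t}$ for any $p_1,p_2>0$; choosing $p_1,p_2$ among $\{1,\,p-1\}$ as appropriate and absorbing $\min_{j\neq k}|v_j-v_k|>0$ into $\delta$ shows that each of $\|h(\wt R_j)\,|\wt R_k|\|_{L^2}$ and $\||\wt R_j|\,h(\wt R_k)\|_{L^2}$ is $\calo(e^{-\delta t})$. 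Summing over $j>k$ gives $\big\|\,|\wt S_k|^{p-1}\wt S_k-|\wt R_k|^{p-1}\wt R_k-|\wt S_{k+1}|^{p-1}\wt S_{k+1}\,\big\|_{L^2}=\calo(e^{-\delta t})$, and combining with the Cauchy--Schwarz step above proves \eqref{Sk1-Rk-ve-exp}. The only point requiring a little care is the pointwise second-order difference bound in the non-Lipschitz range $1<p<2$, where $z\mapsto|z|^{p-1}z$ is merely Hölder continuous; but this is precisely the situation handled by Lemma \ref{Lem-Sk-Rk-q}, so no genuine obstacle remains.
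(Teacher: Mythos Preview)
Your proposal is correct and follows essentially the same route as the paper: both use the splitting $\wt S_k=\wt R_k+\wt S_{k+1}$ to rewrite the integrand as $(|\wt S_k|^{p-1}-|\wt R_k|^{p-1})\wt R_k+(|\wt S_k|^{p-1}-|\wt S_{k+1}|^{p-1})\wt S_{k+1}$, apply Lemma~\ref{Lem-Sk-Rk-q} with $q=p-1$, then combine H\"older/Cauchy--Schwarz with the decoupling Lemma~\ref{Lem-decoup} to obtain the $e^{-\delta t}\|\ve\|_{L^2}$ bound. The paper is slightly terser in that it writes $\|h(\wt S_{k+1})\wt R_k\|_{L^2}$ and $\|h(\wt R_k)\wt S_{k+1}\|_{L^2}$ directly without expanding $\wt S_{k+1}$ into individual $\wt R_j$'s, but your extra step of reducing to pairwise products $|\wt R_j|^{p_1}|\wt R_k|^{p_2}$ is harmless and makes the appeal to Lemma~\ref{Lem-decoup} more transparent.
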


{\bf Proof.}
By the expansion \eqref{wtSk-wtSK1},
Lemmas \ref{Lem-decoup} and  \ref{Lem-Sk-Rk-q}
and  H\"{o}lder's inequality,
\begin{align*}
&\big|\int (| \wt S_{k}|^{p-1}  \wt  S_{k} -| \wt R_k|^{p-1}  \wt R_k-| \wt  S_{k+1}|^{p-1}  \wt  S_{k+1})\ol{\ve}dx\big|\\
&\leq
\int \(\big|| \wt  S_{k}|^{p-1} -| \wt R_k|^{p-1}  \big||\wt R_k|
        +\big|| \wt S_{k}|^{p-1} -| \wt S_{k+1}|^{p-1} \big|| \wt S_{k+1}|\)|\ve|dx\\
&\leq  C (\|h(\wt  S_{k+1}) \wt R_k\|_{L^2}+ \|h(\wt R_k) \wt  S_{k+1}\|_{L^2}) \|\ve\|_{L^2}\\
&\leq Ce^{-\delta t} \|\ve\|_{L^2},
\end{align*}
which yields \eqref{Sk1-Rk-ve-exp}.
\hfill $\square$

\begin{lemma}  \label{Lem-Skp1-Rkp1-ve2-exp}
There exists $\delta >0$ such that
\begin{align}  \label{Skp1-Rkp1-ve2-exp}
\int (| \wt S_{k}|^{p-1} -| \wt R_k|^{p-1}-| \wt S_{k+1}|^{p-1} ) |\ve|^2dx
=\calo(e^{-\delta t} \|\ve\|_{L^2}^2).
\end{align}
\end{lemma}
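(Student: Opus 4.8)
The plan is to reduce \eqref{Skp1-Rkp1-ve2-exp} to a pointwise $L^\9$ estimate on the weight in front of $|\ve|^2$: by H\"older's inequality,
\begin{align*}
   \Big| \int \big(| \wt S_{k}|^{p-1} -| \wt R_k|^{p-1}-| \wt S_{k+1}|^{p-1}\big) |\ve|^2\,dx \Big|
   \leq \big\| | \wt S_{k}|^{p-1} -| \wt R_k|^{p-1}-| \wt S_{k+1}|^{p-1} \big\|_{L^\9}\, \|\ve\|_{L^2}^2 ,
\end{align*}
so, setting $f := | \wt S_{k}|^{p-1} -| \wt R_k|^{p-1}-| \wt S_{k+1}|^{p-1}$, it suffices to prove $\|f\|_{L^\9} = \calo(e^{-\delta t})$. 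The difference from Lemmas \ref{Lem-Skp1-Rkp1-exp} and \ref{Lem-Sk1-Rk-ve-exp} is that here the difference of powers carries no extra soliton factor, so the decoupling gain must be extracted from $f$ itself by exploiting $\wt S_k = \wt R_k + \wt S_{k+1}$ in two different groupings.

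First I would use \eqref{wtSk-wtSK1} in the form $f = \big(| \wt S_{k}|^{p-1} -| \wt R_k|^{p-1}\big) - | \wt S_{k+1}|^{p-1}$ and apply Lemma \ref{Lem-Sk-Rk-q} with $q = p-1$; together with the uniform boundedness of $\wt R_j$ and $\wt S_{k+1}$ on $[T^*,T]$ (which gives $| \wt S_{k+1}|^{p-1} \leq C h(\wt S_{k+1})$), this yields $|f| \leq C h(\wt S_{k+1})$, where $h(\cdot) := |\cdot|^{p-1}$ if $1 < p < 2$ and $h(\cdot) := |\cdot|$ if $p \geq 2$. Symmetrically, writing $f = \big(| \wt S_{k}|^{p-1} -| \wt S_{k+1}|^{p-1}\big) - | \wt R_k|^{p-1}$ and running the argument of Lemma \ref{Lem-Sk-Rk-q} for the pair $\wt S_k = \wt S_{k+1} + \wt R_k$ (again $\wt R_k$ is uniformly bounded by \eqref{B*-wk-ak-deter-bdd}), gives $|f| \leq C h(\wt R_k)$. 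Multiplying the two bounds gives the pointwise product estimate
\begin{align*}
   |f| \leq C\, h(\wt R_k)^{1/2}\, h(\wt S_{k+1})^{1/2},
\end{align*}
in which, since $p-1 > 0$, each factor is a strictly positive power of $| \wt R_k|$, respectively of $| \wt S_{k+1}|$.

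Second, I would bound $| \wt S_{k+1}|$ by $\sum_{j\geq k+1}| \wt R_j|$ (up to a constant depending only on $K$) and invoke decoupling: for $j\neq k$ and any $\g_1,\g_2>0$ one has, exactly as in the proof of Lemma \ref{Lem-decoup} but taking the supremum rather than the integral, $\big\| | \wt R_k|^{\g_1}| \wt R_j|^{\g_2}\big\|_{L^\9} \leq C e^{-\delta|v_j-v_k|t}$ for $t$ large; this uses that the soliton centers $v_k t+\a_k$ and $v_j t+\a_j$ are separated by at least $\tfrac12|v_j-v_k|t$ (a consequence of $v_j\neq v_k$ and the uniform bound $|\a_j|\leq 2|x_j^0|$ from \eqref{B*-wk-ak-deter-bdd}) and the exponential decay \eqref{Q-decay} of $Q$. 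Summing over $k+1\leq j\leq K$ gives $\|f\|_{L^\9}\leq Ce^{-\delta t}$, which combined with the first display proves \eqref{Skp1-Rkp1-ve2-exp}.

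The only step requiring some care — the one I would flag as the main obstacle — is the bookkeeping of exponents across the two regimes $1<p<2$ and $p\geq 2$, so that both one-sided bounds $|f|\leq Ch(\wt S_{k+1})$ and $|f|\leq Ch(\wt R_k)$ hold with the {\it same} $h$ and their geometric mean is an honest product of strictly positive powers of $|\wt R_k|$ and $|\wt S_{k+1}|$ (hence amenable to decoupling). With $h$ as above this is automatic, using only $p-1>0$ and the uniform boundedness of $\wt R_j$, $\wt S_{k+1}$; the remaining steps are a verbatim transcription of the arguments already used for Lemmas \ref{Lem-Skp1-Rkp1-exp} and \ref{Lem-Sk1-Rk-ve-exp}.
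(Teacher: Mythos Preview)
Your proof is correct and lands on the same $L^\infty$ bound for the weight $f := |\wt S_k|^{p-1}-|\wt R_k|^{p-1}-|\wt S_{k+1}|^{p-1}$ as the paper does, but by a different tactic. The paper argues by spatial decomposition: it splits $\bbr^d = \Omega_k \cup \Omega_k^c$ with $\Omega_k = \{|x-v_kt|\leq \tfrac12\min_{j\neq k}|v_k-v_j|\,t\}$, then uses the one-sided bound $|f|\leq C(h(\wt S_{k+1})+|\wt S_{k+1}|^{p-1})$ on $\Omega_k$ (where every $\wt R_j$ with $j\neq k$ is exponentially small) and the mirror bound $|f|\leq C(h(\wt R_k)+|\wt R_k|^{p-1})$ on $\Omega_k^c$ (where $\wt R_k$ itself is exponentially small). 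This is in effect taking the \emph{minimum} of the two one-sided controls, localized region by region.

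You instead combine the two global one-sided bounds multiplicatively, via $|f|\leq C\,h(\wt R_k)^{1/2}h(\wt S_{k+1})^{1/2}$, and then invoke a pointwise (sup-norm) version of the decoupling Lemma~\ref{Lem-decoup} for the resulting product $|\wt R_k|^{\gamma_1}|\wt R_j|^{\gamma_2}$. This is slightly slicker: it avoids introducing $\Omega_k$ explicitly, and the $L^\infty$ decoupling you need is indeed immediate from the triangle inequality for the centers and \eqref{Q-decay}. The domain-splitting version has the minor advantage that it only uses Lemma~\ref{Lem-Sk-Rk-q} exactly as stated, whereas you also apply it with the roles of $\wt R_k$ and $\wt S_{k+1}$ swapped (which is of course legitimate by the same argument and the uniform boundedness of $\wt S_{k+1}$). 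Both routes are short and equivalent in strength.
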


{\bf Proof.}
Let $\Omega_k:=\{ x: |x-v_kt|\leq \frac 12 \min_{j\not =k} |v_k-v_j|t \}$.
By Lemma \ref{Lem-Sk-Rk-q},
\begin{align}   \label{Skp1-Rkp1-ve2-esti.0}
   &\bigg|\int (| \wt S_k|^{p-1} -| \wt R_k|^{p-1}-| \wt S_{k+1}|^{p-1} ) |\ve|^2dx\bigg|  \notag \\
  \leq& 2\int_{\Omega_k} (h(\wt S_{k+1}) + |\wt S_{k+1}|^{p-1})  |\ve|^2dx
        + 2\int_{\Omega_k^c}  (h(\wt R_k) + |\wt R_k|^{p-1}) |\ve|^2dx  \notag \\
  \leq& C \| h(\wt S_{k+1}) + |\wt S_{k+1}|^{p-1} \|_{L^\9(\Omega_k)}  \|\ve\|_{L^2}^2
        + C \|h(\wt R_k) + |\wt R_k|^{p-1}\|_{L^\9(\Omega_k^c)} \|\ve\|_{L^2}^2.
\end{align}
Note that, for $x\in \Omega_k$, for any $j\not =k$,
\begin{align*}
   |x-v_j t-\a_j| \geq |v_j-v_k|t - |x- v_kt| - |\a_j|
   \geq \frac 14 |v_j - v_k|t,
\end{align*}
and thus by the exponential decay of $Q$,
\begin{align}     \label{Skp1-Rkp1-ve2-esti.1}
   \|h(\wt S_{k+1}) + |\wt S_{k+1}|^{p-1}\|_{L^\9(\Omega_k^c)}
   \leq C e^{-\delta t}.
\end{align}
Similarly, for $x\in \Omega_k^c$,
there exists $c>0$ such that for $t$ large enough,
\begin{align*}
     |x-v_kt - \a_k| \geq \frac 12 \min_{j\not =k}\{|v_j-v_k|t\} - |\a_k|
     \geq ct,
\end{align*}
and thus
\begin{align}     \label{Skp1-Rkp1-ve2-esti.2}
   \|h(\wt R_k) + |\wt R_k|^{p-1}\|_{L^\9(\Omega_k^c)}
   \leq C  e^{-\delta t}.
\end{align}
Therefore, plugging \eqref{Skp1-Rkp1-ve2-esti.1} and \eqref{Skp1-Rkp1-ve2-esti.2} into
\eqref{Skp1-Rkp1-ve2-esti.0} we obtain \eqref{Skp1-Rkp1-ve2-exp}
and finish the proof.
\hfill $\square$

\begin{lemma}  \label{Lem-Skp3-Rkp3-ve2}
There exists $\delta >0$ such that
\begin{align} \label{Skp3-Rkp3-exp}
\int (|\wt S_k|^{p-3}\wt S_k^2 -| \wt R_k|^{p-3}\wt R_k^2
   -| \wt S_{k+1}|^{p-3}\wt S_{k+1}^2) \ol{\ve}^2dx
   = \calo(e^{-\delta t}).
\end{align}
\end{lemma}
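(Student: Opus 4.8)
The plan is to repeat, almost verbatim, the argument of the proof of Lemma~\ref{Lem-Skp1-Rkp1-ve2-exp}, with the real nonlinearity $z\mapsto|z|^{p-1}$ replaced by the $\C$-valued map
\be
   \Psi(z):=|z|^{p-3}z^{2},\qquad \Psi(0):=0,
\ee
which is positively homogeneous of degree $p-1$, satisfies $|\Psi(z)|=|z|^{p-1}$, and is smooth away from the origin with $|\nabla\Psi(z)|\le C|z|^{p-2}$. Let $h$ be the function from Lemma~\ref{Lem-Sk-Rk-q} with $q=p-1$, i.e. $h(z)=|z|^{p-1}$ if $1<p<2$ and $h(z)=|z|$ if $2\le p\le 1+\frac4d$. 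The key pointwise estimate I would first establish is the analogue of Lemma~\ref{Lem-Sk-Rk-q}: for all $a,b\in\C$,
\be
   |\Psi(a+b)-\Psi(a)-\Psi(b)|\le C\min\{h(a),h(b)\}.
\ee
When $1<p<2$ this follows because a continuous positively homogeneous function of degree $p-1\in(0,1)$ is globally $(p-1)$-H\"older, so $|\Psi(a+b)-\Psi(a)|\le C|b|^{p-1}$, which together with $|\Psi(b)|=|b|^{p-1}$ gives the bound by $Ch(b)$; when $p\ge2$ it follows from the mean value theorem applied to $\Psi\in C^{1}$, the bound on $\nabla\Psi$, the boundedness of the arguments (here $\wt R_j$ are uniformly bounded), and $|\Psi(b)|=|b|^{p-1}\le C|b|=Ch(b)$. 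The minimum on the right comes from the symmetry $\Psi(a+b)-\Psi(a)-\Psi(b)=\Psi(b+a)-\Psi(b)-\Psi(a)$.

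Next I would apply this with $a=\wt R_k$ and $b=\wt S_{k+1}$, using $\wt S_k=\wt R_k+\wt S_{k+1}$, to get
\be
   \big||\wt S_k|^{p-3}\wt S_k^{2}-|\wt R_k|^{p-3}\wt R_k^{2}-|\wt S_{k+1}|^{p-3}\wt S_{k+1}^{2}\big|
   \le C\min\{h(\wt R_k),h(\wt S_{k+1})\},
\ee
and split $\bbr^{d}=\Omega_k\cup\Omega_k^{c}$ with $\Omega_k:=\{x:|x-v_kt|\le\frac12\min_{j\ne k}|v_k-v_j|t\}$, exactly as in the proof of Lemma~\ref{Lem-Skp1-Rkp1-ve2-exp}. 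On $\Omega_k$, for every $j\ne k$ one has $|x-v_jt-\a_j|\ge\frac14|v_j-v_k|t$ for $t$ large (as $|\a_j|$ is bounded), so the exponential decay \eqref{Q-decay} of $Q$ yields $\|h(\wt S_{k+1})\|_{L^{\9}(\Omega_k)}\le Ce^{-\delta t}$; on $\Omega_k^{c}$ one has $|x-v_kt-\a_k|\ge ct$ for some $c>0$ and $t$ large, so $\|h(\wt R_k)\|_{L^{\9}(\Omega_k^{c})}\le Ce^{-\delta t}$.

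Finally, bounding the pointwise difference by the $\Omega_k$-estimate on $\Omega_k$ and by the $\Omega_k^{c}$-estimate on $\Omega_k^{c}$, multiplying by $|\bar\ve|^{2}=|\ve|^{2}$, and integrating, I would conclude
\be
   \Big|\int\big(|\wt S_k|^{p-3}\wt S_k^{2}-|\wt R_k|^{p-3}\wt R_k^{2}-|\wt S_{k+1}|^{p-3}\wt S_{k+1}^{2}\big)\bar\ve^{2}\,dx\Big|
   \le Ce^{-\delta t}\|\ve\|_{L^2}^{2}=\calo(e^{-\delta t}),
\ee
the last step using $\|\ve\|_{L^2}\le\|\ve\|_{H^1}<1$ from \eqref{ve-T*-1}. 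The only slightly delicate point is the pointwise ``almost additivity'' bound in the range $1<p<2$, where $\Psi$ fails to be $C^{1}$ at the origin; this is resolved by the H\"older continuity of $\Psi$, after which the rest is a routine repetition of the decoupling-and-localization scheme already used for the preceding lemmas.
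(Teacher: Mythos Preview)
Your argument is correct, and it is genuinely different from---and simpler than---the paper's proof. The paper writes $|\wt S_k|^{p-3}\wt S_k^2 = |\wt S_k|^{p-1}\cdot\frac{\wt S_k^2}{|\wt S_k|^2}$, splitting off the unimodular ``phase'' factor; after invoking Lemma~\ref{Lem-Skp1-Rkp1-ve2-exp} for the magnitude part, it must still control quantities like $|\wt R_k|^{p-1}\big|\frac{\wt S_k^2}{|\wt S_k|^2}-\frac{\wt R_k^2}{|\wt R_k|^2}\big|$, which requires \emph{lower} bounds on $|\wt R_k|$ (resp.\ $|\wt S_{k+1}|$) in suitable regions, obtained from the two-sided asymptotic $Q(x)\sim e^{-\delta_0|x|}$. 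This forces the more delicate decomposition into regions $\Omega_j$ of radius $\ve d_*$ with $\ve$ tuned small, and a four-term case analysis $J_{11},J_{12},J_{21},J_{22}$. By contrast, you treat $\Psi(z)=|z|^{p-3}z^2$ as a single positively homogeneous map of degree $p-1$ and prove the almost-additivity bound $|\Psi(a+b)-\Psi(a)-\Psi(b)|\le C\min\{h(a),h(b)\}$ directly from H\"older continuity (if $p<2$) or the mean value theorem plus boundedness (if $p\ge2$); after that, the localization from the proof of Lemma~\ref{Lem-Skp1-Rkp1-ve2-exp} applies verbatim. Your route avoids lower bounds on $Q$ altogether and makes clear that this lemma is no harder than Lemma~\ref{Lem-Skp1-Rkp1-ve2-exp}; the paper's route, while longer, isolates the phase contribution explicitly.
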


{\bf Proof. }
Since
\begin{align*}
   |\wt S_k|^{p-1} \frac{\wt S_{k}^2}{|\wt S_k^2|}
    = |\wt R_k|^{p-1} \frac{\wt S_{k}^2}{|\wt S_k^2|}
      + |\wt S_{k+1}|^{p-1} \frac{\wt S_{k}^2}{|\wt S_k^2|}
      + \calo( | |\wt S_k|^{p-1} - |\wt R_k|^{p-1} - |\wt S_{k+1}|^{p-1} |),
\end{align*}
we have
\begin{align} \label{Skp3-Rkp3-esti}
    &\big|\int (|\wt S_k|^{p-3}\wt S_k^2 -| \wt R_k|^{p-3}\wt R_k^2
   -| \wt S_{k+1}|^{p-3}\wt S_{k+1}^2) \ol{\ve}^2dx\big|  \nonumber  \\
\leq&\int \bigg||\wt S_k|^{p-1}\frac{\wt S_k^2}{|\wt S_k|^2}
   -| \wt R_k|^{p-1}\frac{\wt R_k^2}{|\wt R_k|^2}
   -|\wt S_{k+1}|^{p-1}\frac{\wt S_{k+1}^2}{|\wt S_{k+1}|^2}\bigg| |\ve|^2dx \nonumber  \\
 \leq &  \int | \wt R_k|^{p-1}\bigg|\frac{\wt S_k^2}{|\wt S_k|^2}
     -\frac{\wt R_k^2}{|\wt R_k|^2} \bigg| |\ve|^2dx
    +\int | \wt S_{k+1}|^{p-1}\bigg|\frac{\wt S_k^2}{|\wt S_k|^2}
       -\frac{\wt  S_{k+1}^2}{|\wt  S_{k+1}|^2} \bigg| |\ve|^2dx     \nonumber \\
     & + \calo\(\int \big||\wt S_k|^{p-1}-| \wt S_k|^{p-1}-| \wt R_k|^{p-1}\big| |\ve|^2dx\) \nonumber  \\
= &  \int | \wt R_k|^{p-1}\bigg|\frac{\wt  S_{k}^2}{|\wt S_k|^2}
           -\frac{\wt R_k^2}{|\wt R_k|^2} \bigg| |\ve|^2dx
+\int | \wt  S_{k+1}|^{p-1}\bigg|\frac{\wt S_k^2}{|\wt S_k|^2}
         -\frac{\wt  S_{k+1}^2}{|\wt  S_{k+1} |^2} \bigg| |\ve|^2dx+ \calo(e^{-\delta t})  \nonumber \\
 =:& J_1 + J_2 + \calo(e^{-\delta t}).
\end{align}
where the last step is due to Lemma \ref{Lem-Skp1-Rkp1-ve2-exp}.

Below we estimate $J_1$ and $J_2$ separately.
For this purpose,
let us set
$d_*:= \min_{k\leq j\not =l\leq K}\{|v_jt + \a_j - v_lt-\a_l|\}$.
Similarly,
let $w_*:=\min_{k\leq j\leq K} w_j$, $w^*:=\max_{k\leq j\leq K} w_j$.
For every $k\leq j\leq K$, set
$$\Omega_j:= \bigg\{x\in \bbr^d: |x-v_jt -\a_j|
    \leq   \ve d_*  \bigg\}, $$
where $\ve$ is a small constant to be specified below.

$(i)$ Estimate of $J_1$.
We decompose
\begin{align}  \label{I-II}
   J_1 =&   \int_{\Omega_k^c} | \wt R_k|^{p-1}\bigg|\frac{\wt S_k^2}{|\wt S_k|^2}
        -\frac{\wt R_k^2}{|\wt R_k|^2} \bigg| |\ve|^2dx
        + \int_{\Omega_k} | \wt R_k|^{p-1}\bigg|\frac{\wt S_k^2}{|\wt S_k|^2}
       -\frac{\wt R_k^2}{|\wt R_k|^2} \bigg| |\ve|^2  dx
    :=J_{11}+J_{12}.
\end{align}

Note that,
for $x\in \Omega_k^c$,
since
\begin{align}
   |x-v_kt -\a_k|
   \geq& \ve  d_*
    > \frac{c}{2} t
\end{align}
for $t$ large enough, where $c>0$,
by \eqref{Q-decay}, there exist $C,\delta>0$ such that
\begin{align} \label{I-exp}
   J_{11}\leq C\|R_k\|^{p-1}_{L^\9(\Omega^c_k)}\|\ve\|_{L^2}^2\leq Ce^{-\delta t}\|\ve\|_{L^2}^2 .
\end{align}

Concerning the first term $J_{12}$ in \eqref{I-II},
since $Q(x)\sim e^{-\delta_0 |x|}$ (see \cite{BL83}),
we infer that
\begin{align} \label{wtRk-exp-lowbdd}
   |\wt R_{k}(t,x)| \geq C e^{-\delta_0 \frac{ \ve d_* }{w_k} }\geq C e^{-\delta_0 \frac{ \ve d_* }{w_*} },\ \ x\in \Omega_k.
\end{align}
On the other hand, for $x\in \Omega_k$ and any $j\not =k$,
\begin{align*}
   |x-v_jt -\a_j|
   \geq  |(v_k-v_j)t + \a_k-\a_j|
           - |x-v_kt-\a_k|
   \geq  (1-\ve)d_*,
\end{align*}
which yields that
\begin{align} \label{wtSk1-exp-upbdd}
   |\wt S_{k+1}(t,x)|
   \leq C \sum\limits_{j=k+1}^K
         e^{- \delta_0 \frac{(1-\ve)d_*}{w_j}}
   \leq C e^{-\delta_0 \frac{(1-\ve)d_*}{w^*}},\ \ x\in \Omega_k.
\end{align}
Hence, we obtain from \eqref{wtRk-exp-lowbdd} and \eqref{wtSk1-exp-upbdd} that,
for $\ve$ small enough such that
$$\ve<\frac{w_*}{w^*+w_*},$$
there exist $C,\delta>0$ such that
\begin{align}  \label{Sk1-Rk-exp}
   \bigg|\frac{\wt S_{k+1}(t,x)}{\wt R_k(t,x)}\bigg|
   \leq C e^{-\delta_0 d_* (\frac{(1-\ve)}{w^*}-\frac{\ve }{w_*})}
   \leq C e^{-\delta t},\ \ x\in \Omega_k.
\end{align}
Taking into account
\begin{align*}
   \frac{\wt S_k^2}{|\wt S_k^2|} - \frac{\wt R_k^2}{|\wt R_k^2|}
   =& \frac{\wt S_k^2 |\wt R_k|^2 -| \wt S_k|^2 \wt R_k^2
           + 2 \wt R_k \wt S_{k+1} |\wt R_k|^2 - 2 {\rm Re}(\wt R_k \wt S_{k+1}) \wt R_k^2 }
     {|\wt R_k + \wt S_{k+1}|^2 |\wt R_k|^2} \notag  \\
   \leq& \frac{|\wt R_k^{-1} \wt S_{k+1} |^2 + |\wt R_k^{-1} \wt S_{k+1}|}
           {|1+ \wt R_k^{-1} \wt S_{k+1}|^2}
\end{align*}
we thus lead to
\begin{align}  \label{wtSk2-wtRk2}
   \bigg| \frac{\wt S_k^2}{|\wt S_k^2|} - \frac{\wt R_k^2}{|\wt R_k^2|}    \bigg|
   \leq C e^{-\delta t}, \ \ x\in \Omega_k,
\end{align}
which yields that
\begin{align}  \label{II-exp}
       J_{12} \leq Ce^{-\delta t}\|\ve\|^2_{L^2}.
\end{align}

Thus, plugging  \eqref{I-exp} and \eqref{II-exp}  into \eqref{I-II}
we obtain
\begin{align} \label{wtRk-p-1-exp}
 J_2 \leq Ce^{-\delta t}.
\end{align}

$(ii)$ Estimate of $J_2$.
Set
\begin{align}
    \Omega  = \bigcup\limits_{j=k+1}^K \Omega_j
    = \bigcup\limits_{j=k+1}^K \bigg\{x\in \bbr^d: |x-v_jt -\a_j|
    \leq  \ve d_* \bigg\}
\end{align}
and decompose
\begin{align} \label{J2-J21-J22}
   J_2
   =& \int\limits_{\Omega} | \wt  S_{k+1}|^{p-1}\bigg|\frac{\wt S_k^2}{|\wt S_k|^2}
         -\frac{\wt  S_{k+1}^2}{|\wt  S_{k+1} |^2} \bigg| |\ve|^2dx
      + \int\limits_{\Omega^c} | \wt  S_{k+1}|^{p-1}\bigg|\frac{\wt S_k^2}{|\wt S_k|^2}
         -\frac{\wt  S_{k+1}^2}{|\wt  S_{k+1} |^2} \bigg| |\ve|^2dx \notag \\
   =& J_{21} + J_{22}.
\end{align}

Note that,
for every $k+1\leq j\leq K$,
since $Q(x)\sim e^{-\delta_0|x|}$,
\begin{align}
   |\wt R_j(t,x)|
   \geq C e^{-\delta_0 \frac{\ve d_*}{w_j}}
           \geq C e^{-\delta_0 \frac{\ve d_*}{w_*}},\ \ x\in \Omega_j.
\end{align}
Moreover, for $x\in \Omega/\Omega_j$,
there exists $j'\not = j$ such that $x\in \Omega_{j'}$ and so
\begin{align*}
   |x-v_jt-\a_j|
   \geq |v_{j'}t +\a_{j'} - v_jt -\a_j| - |x-v_{j'}t-\a_{j'}|
   \geq (1-\ve)d_*.
\end{align*}
This yields that
\begin{align*}
   |\wt R_j (t,x)|
     \leq C e^{-\delta_0\frac{(1-\ve)d_*}{w_j}}
   \leq C e^{-\delta_0 \frac{(1-\ve)d_*}{w^*}}, \ \ x\in \Omega/\Omega_j.
\end{align*}
Hence,
for $\ve$ very small such that
$$\ve<\frac{w_*}{w_*+w^*},$$
we obtain that
\begin{align}
    |\wt S_{k+1}|\geq C e^{-\delta_0  \frac{\ve d_*}{w_*}}
                - C' e^{-\delta_0 \frac{(1-\ve)d_*}{w^*}}
    \geq \frac 12 C e^{-\delta_0 \frac{\ve d_*}{w_*}}, \ \ x\in \Omega,\ k+1\leq j\leq K,
\end{align}
which yields that there exist $C,\delta >0$ such that
\begin{align}
   |\wt S_{k+1}|
    \geq  C e^{-\delta_0 \frac{\ve d_*}{w_*}}, \ \ x\in \Omega.
\end{align}

Moreover, for any $x\in \Omega$,
there exists $k+1\leq j\leq K$ such that $x\in \Omega_j$ and so
\begin{align}
   |x-v_kt -\a_k|
   \geq  |(v_j-v_k)t -(\a_j-\a_k)|  -  |x-v_jt -\a_j|
   \geq (1-\ve) d_*,
\end{align}
which yields that
\begin{align}
   |\wt R_k(t,x)| \leq e^{-\delta_0\frac{(1-\ve)d_*}{w_k}}\leq e^{-\delta_0\frac{(1-\ve)d_*}{w^*}}, \ \ x\in \Omega.
\end{align}

Thus, we infer that for $\ve$ possibly smaller such that
$$\ve< \frac{w_*}{w_*+w^*}, $$
then for $x\in \Omega$,
\begin{align}
   \bigg|\frac{\wt R_k(t,x)}{\wt S_{k+1}(t,x)}\bigg|
   \leq C e^{-\delta_0d_*(\frac{(1-\ve)}{w^*} - \frac{\ve}{w_*} )}
   \leq C e^{-\delta t},\ \ x\in \Omega.
\end{align}
Then, similar to \eqref{wtSk2-wtRk2}, we have
\begin{align}  \label{wtSk2-wtSk12}
   \bigg| \frac{\wt S_k^2}{|\wt S_k^2|} - \frac{\wt S_{k+1}^2}{|\wt S_{k+1}^2|}    \bigg|
   \leq C \frac{|\wt S_{k+1}^{-1}\wt R_k  |^2 + |\wt S_{k+1}^{-1}\wt R_k |}
           {|1+\wt S_{k+1}^{-1} \wt R_k |^2}
   \leq C e^{-\delta t}, \ \ x\in \Omega,
\end{align}
which yields that
\begin{align} \label{J21}
   J_{21} \leq C  e^{-\delta t} \|\ve\|^2_{L^2}.
\end{align}

Concerning $J_{22}$,
we see that for $x\in \Omega^c$,
for $k+1\leq j\leq K$,
\begin{align*}
   |x-v_jt -\a_j|
   \geq \ve d_*,
\end{align*}
and so
\begin{align}
   |\wt R_j(t,x)| \leq C e^{-\delta_0 \frac{\ve d_*}{w_j}},\ \ x\in \Omega^c.
\end{align}
This yields that there exist $C,\delta>0$ such that
\begin{align}
   |\wt S_{k+1}| \leq C \sum\limits_{j=k+1}^K |\wt R_j|
   \leq C e^{-\delta t},\ \ x\in \Omega^c,
\end{align}
and thus
\begin{align} \label{J22}
  J_{22} \leq  C e^{-\delta t} \|\ve\|^2_{L^2}.
\end{align}

Thus, we obtain from \eqref{J2-J21-J22}, \eqref{J21} and \eqref{J22} that
\begin{align} \label{wtSk-p-1-exp}
 J_2 \leq Ce^{-\delta  t}.
\end{align}

Therefore, plugging \eqref{wtRk-p-1-exp} and \eqref{wtSk-p-1-exp}
into \eqref{Skp3-Rkp3-esti}
we prove \eqref{Skp3-Rkp3-exp}
and thus finish the proof.
\hfill $\square$

Now, estimate \eqref{4} follows from Lemmas \ref{Lem-Skp1-Rkp1-exp},
\ref{Lem-Sk1-Rk-ve-exp}, \ref{Lem-Skp1-Rkp1-ve2-exp} and \ref{Lem-Skp3-Rkp3-ve2}.

\section*{Acknowledgements}

The authors thank Daomin Cao for valuable discussions.
M. R\"ockner and D. Zhang thank for the financial support
by the Deutsche Forschungsgemeinschaft
(DFG, German Science Foundation) through SFB 1283/2
2021-317210226 at Bielefeld University.
Y. Su is supported by NSFC (No. 11601482).
D. Zhang  is also grateful for the support by NSFC (No. 11871337)
and Shanghai Rising-Star Program 21QA1404500.
This work is also partially supported by the Key Laboratory of Scientific and Engineering Computing
(Ministry of Education).

\end{document}